\newtheorem{thm}{Theorem}[section]
\newtheorem*{thm*}{Theorem}
\newtheorem*{theorem*}{Theorem}
\newtheorem*{prop*}{Proposition}
\newtheorem*{lem*}{Lemma}
\newtheorem*{question*}{Question}
\newtheorem{prop}[thm]{Proposition}
\newtheorem{cor}[thm]{Corollary}
\newtheorem{lemma}[thm]{Lemma}
\newtheorem{lem}[thm]{Lemma}
\theoremstyle{definition}
\newtheorem{definition}[thm]{Definition}
\newtheorem{defn}[thm]{Definition}
\theoremstyle{remark}
\newtheorem{rmk}[thm]{\textsc{Remark}}
\newtheorem{example}[thm]{\textsc{Example}}
\newtheorem{notation}[thm]{\textsc{Notation}}
\newcommand{\operator}[1]{\operatorname{#1}}
\newcommand{\rk}{\operatorname{rk}}
\newcommand{\ONE}{\mathds{1}}
\newcommand{\id}{\operator{id}}
\newcommand{\mbf}[1]{\mathbf{#1}}
\newcommand{\mc}[1]{\mathcal{#1}}
\newcommand{\mscr}[1]{\mathscr{#1}}
\newcommand{\tensor}{\otimes}
\newcommand{\dirSum}{\oplus}
\newcommand{\comp}{\circ}
\newcommand{\iso}{\cong}
\newcommand{\define}{\coloneqq}
\newcommand{\into}{\hookrightarrow}
\newcommand{\onto}{\twoheadrightarrow}
\newcommand{\xto}{\xrightarrow}
\newcommand{\lowxto}[1]{\mathrel{\raisebox{-2pt}{$\xrightarrow{#1}$}}}
\newcommand{\lowxot}[1]{\mathrel{\raisebox{-2pt}{$\xleftarrow{#1}$}}}
\newcommand{\eqto}{\lowxto{\simeq}}
\newcommand{\eqot}{\lowxot{\simeq}}
\newcommand{\IN}{\mathds{N}}
\newcommand{\IZ}{\mathds{Z}}
\newcommand{\IR}{\mathds{R}}
\newcommand{\IC}{\mathds{C}}
\newcommand{\categoryname}{\mathbf}
\newcommand{\cat}[1]{\mbf{#1}} 
\newcommand{\sh}[1]{\mc{#1}}   
\newcommand{\st}[1]{\sh{#1}}   
\newcommand{\twocat}{\mscr}
\newcommand{\opp}{\mathrm{op}}
\newcommand{\point}{\ast}
\newcommand{\Man}{\cat{Man}}
\newcommand{\SmSt}{\cat{SmSt}}
\newcommand{\Rep}{\cat{Rep}}
\renewcommand{\Vec}{\cat{Vec}}
\newcommand{\sVec}{\cat{sVec}}
\newcommand{\Semi}{\cat{Semi}}
\newcommand{\bSemi}{\overline{\cat{Semi}}}
\newcommand{\Cinf}{{C^\infty}}
\newcommand{\DZ}{\mscr{Z}}
\newcommand{\CP}{\mathscr{C}\!\mathscr{P}}
\newcommand{\B}{\mathscr{B}}
\renewcommand{\H}{\mathrm{H}}
\newcommand{\ev}{\mathrm{ev}}
\newcommand{\Hom}{\mathrm{Hom}}
\newcommand{\End}{\mathrm{End}}
\newcommand{\Aut}{\mathrm{Aut}}
\newcommand{\Lie}{\operatorname{Lie}} 
\newcommand{\DiffSt}{\categoryname{DiffSt}}
\newcommand{\Bibun}{\categoryname{Bibun}}
\newcommand{\LieGpd}{\categoryname{LieGpd}}
\newcommand{\LieGpdW}{\categoryname{LieGpd[W^{-1}]}}
\newcommand{\Cat}{\categoryname{Cat}}
\newcommand{\Gpd}{\categoryname{Gpd}}
\newcommand{\ii}{\mathbf{i}}
\newcommand{\mker}{\mathrm{ker}}
\newcommand{\im}{\mathrm{im}}
\newcommand{\coim}{\mathrm{coim}}
\newcommand{\coker}{\mathrm{coker}}
\newcommand{\Spin}{\mathrm{Spin}}
\newcommand{\SU}{\mathrm{SU}}
\newcommand{\SO}{\mathrm{SO}}
\newcommand{\shff}{\#} 
\renewcommand{\restriction}{\mathord{\upharpoonright}}
\title{The Drinfel'd centres of String 2-groups}
\author{Christoph Weis}
\date{}
\begin{document}

\maketitle
\begin{abstract}
    Let $G$ be a compact connected Lie group
    and $k \in H^4(\B G,\IZ)$ a cohomology class.
    The String 2-group $G_k$ is the central extension 
    of $G$ by the 2-group $[\ast/U(1)]$ classified by $k$. 
    It has a close relationship to the level $k$ extension
    of the loop group $LG$.
    We compute the Drinfel'd centre of $G_k$ as a smooth 2-group.
    When $G$ is semisimple, we prove that the Drinfel'd centre
    is equal to the invertible part of the 
    category of positive energy representations of $LG$ at level $k$ 
    (as long as we exclude factors of $E_8$ at level 2).
\end{abstract}

\vspace{1.5em} 

\tableofcontents

\vspace{1.5em}

\addsec{Introduction}
Let $G$ be a compact connected Lie group
and $k \in \H^4(\B G,\IZ)$ a cohomology class.
The String 2-group $G_k$ is the central extension 
of $G$ by the 2-group $[\ast/U(1)]$ classified by $k$. 
It has a close relationship to the level $k$ extension
of the loop group $LG$.
In this note, we compute its Drinfel'd centre 
$\DZ G_k$ in the context of smooth 2-groups.
The result of this computation 
is interesting: we find that $\DZ G_k $ recovers the invertible part of $\Rep^k LG$ 
when $G$ is semisimple
(as long as we exclude factors of $E_8$ at level 2).

Before stating the result in more detail, we introduce the model of the String
2-group and centre we use.

\subsection*{The String 2-group}
A \emph{2-group}~\cite{baez2004higher2Gps} is a group object in the bicategory $\Cat$ of categories. 
It is a monoidal groupoid $(\cat{C},\ONE,\tensor,\omega)$ all of whose objects 
are invertible:
for all $x \in \cat{C}$, there exists an object $x^{-1}$ such that
$x \tensor x^{-1} \iso x^{-1} \tensor x \iso \ONE$.
The set of isomorphism classes of $\cat{C}$ forms a group $(\pi_0 \cat{C}, \tensor)$,
and the endomorphisms of $\ONE$ form an abelian group 
$(\pi_1 \cat{C},\comp=\tensor)$.\footnote{
There are two compatible group structures on $\pi_1 \cat{C}$ given by 
tensor product and composition. By the Eckmann-Hilton argument, these 
products agree and are commutative.}
Every object $x \in \pi_0\cat{C}$ acts on $\pi_1 \cat{C}$ by conjugation:
$f \mapsto \id_x \tensor f \tensor \id_{x^{-1}}$. This assembles into 
an action $\rho:\pi_0 \cat{C} \to \Aut(\pi_1 \cat{C})$.
The associator of $\cat{C}$ satisfies the pentagon equation
\begin{equation*}
    \rho(g)(\omega(g',g'',g''')) \omega(g,g'g'',g''') \omega(g',g'',g''') = 
    \omega(gg',g'',g''') \omega(g,g',g''g''')
\end{equation*}
for all $g,g',g'',g''' \in \pi_0 \cat{C}$. This is the equation of a 
group 3-cocycle in $Z^3(\B\pi_0\cat{C},\pi_1\cat{C})$.
Cohomologous 3-cocyles give rise to equivalent monoidal categories,
and 2-groups are completely classified by the data
$\left(\pi_0\cat{C},\pi_1\cat{C},\rho,
[\omega]\in \H^3(\B\pi_0\cat{C},\pi_1\cat{C})\right)$
\cite{sinh1975gr,baez2004higher2Gps}. 
The data of a (1-)group $G$ induces a 2-group (also denoted $G$) 
with objects $G$, tensor product the multiplication on $G$, and only identity morphisms. 
An abelian group $A$ also defines a 2-group $[\ast/A]$ 
with a single object $\ONE$, whose
endomorphisms form the group $\pi_1[\ast/A] = A$.\footnote{The notation 
indicates that $[\ast/A]$ is the quotient stack associated to the trivial $A$-action
on the point $\ast$.} 
Any 2-group $\cat{C}$ is an extension of 2-groups of this type:
\begin{equation*}
    [\ast/\pi_1\cat{C}] \to \cat{C} \to \pi_0\cat{C}.
\end{equation*}
The data of this extension is encoded by the conjugation action $\rho$ and 
the cohomology class $[\omega]$ of the associator.
When $\rho$ is trivial, one speaks of a \emph{central extension}.
Central extensions of $G$ by $[\ast/A]$ are thus classified by 
$[\omega] \in \H^3(\B G, A)$, 
where $A$ carries the trivial $G$-module structure.

\emph{Smooth 2-groups} are group objects in the 
bicategory of smooth groupoids (see Section~\ref{sec:smCats}).
Just as for discrete 2-groups, there are smooth 2-groups $G$ and $[\ast/A]$ associated
to a Lie group $G$ and an abelian Lie group $A$.
Central extensions of $G$ by $[\ast/A]$ are also classified by 
$\H^3(\B G,A)$~\cite{schommer2011central}, though one has to work with 
Segal-Mitchison cohomology~\cite{segal1970cohomology,brylinski2000differentiable} 
to make this precise.
We recall Segal-Mitchison cohomology in Section~\ref{sec:2Grps}.
Let $G$ be a compact connected Lie group. 
The short exact sequence of coefficients
$\IZ \to \IR \xto{\exp} U(1)$ gives rise to an isomorphism 
$\H^3(\B G,U(1)) \simeq \H^4(\B G,\IZ)$.
By abuse of notation, we write $k \in \H^4(\B G,\IZ)$ to denote a 
$U(1)$-valued 3-cocycle on $G$ representing $k$. For simple $G$, 
$\H^4(\B G,\IZ) \iso \IZ$, and $k$ is often called a \emph{level}.

The \emph{String 2-group} $G_k$ is the central extension of $G$
by $[\ast/U(1)]$ classified by $k$.
A large body of work has been devoted to constructing and understanding
the String 2-groups, with particular interest in the case where 
$G$ is simple simply-connected and $k$ is a
generator of $\H^4(\B G, \IZ)$: The 2-group $G_1$ is a model for 
the universal 3-connected cover of the Lie group $G$.
Such a 3-connected cover admits no incarnation as a finite-dimensional
Lie group~\cite[Footnote 2]{schommer2011central}, but it has been 
constructed variously as a topological 
group~\cite{stolz1996conjecture,stolz2004elliptic}, an
infinite-dimensional Lie 
(2-)group~\cite{baez2004higher2Gps, baez2007loop, henriques2008integrating, nikolaus2013smooth}, a
diffeological 2-group~\cite{waldorf2012construction},
and a smooth $\infty$-group~\cite{fiorenza2012vcech, bunk2020principal}.
We use the model of $G_k$ as a \emph{finite-dimensional} smooth 2-group
given in~\cite{schommer2011central}.

Let $LG=\Cinf(S^1,G)$ denote the loop group of $G$.
Transgression~\cite{brylinski1994geometry,waldorf2016transgressive}
establishes a
correspondence between String 2-groups $\{G_k\}$ and central extensions $\{LG_k\}$ 
of the loop group.\footnote{
The procedure requires a choice of connection on $G_k$. 
The correspondence between $\{G_k\}$ and $\{LG_k\}$ is bijective when $G$ is
simply-connected.}
The category $\Rep^k{LG}$ of \emph{positive energy representations} of $LG$ at level $k$
is a linear braided monoidal category, defined when $k$ satisfies a 
positivity condition.
It is in fact a \emph{modular tensor category}, 
and defines a 3-dimensional Topological Quantum Field Theory via 
the Reshetikhin-Turaev construction~\cite{reshetikhin1991invariants}.
This is understood to be Chern-Simons theory with gauge group $G$ at 
level $k$~\cite{freed2009remarks}, see~\cite{henriques2017chern} for 
an argument in the simply-connected case.


\subsection*{The smooth centre}
The centre of a monoid $M$ is the set of elements $z \in M$ such that 
$zm = mz$ for all $m \in M$. This concept admits a categorification to 
monoidal categories. The equality $zm = mz$ is replaced by the 
data of an isomorphism satisfying coherence conditions.
The \emph{Drinfel'd centre} $\DZ \cat{C}$ of a monoidal category $\cat{C}$
is the monoidal category whose objects are pairs
$(X, \gamma)$, where $X \in \cat{C}$ and $\gamma:X \tensor - \to - \tensor X$
is a natural isomorphism satisfying the hexagon equation
(recalled in Section~\ref{sec:smDZ}).
Such an isomorphism is called a \emph{half-braiding} for $X$.
Just as the centre of a monoid is a commutative monoid, the Drinfel'd centre of
$\cat{C}$ is a braided monoidal category.
The centre $\DZ \cat{C}$ of a 2-group $\cat{C}$
is again a 2-group (Lemma~\ref{lem:DZof2gpis2gp}).
The braiding $\beta$ makes $\DZ \cat{C}$ 
a \emph{braided categorical group}~\cite{joyal1993braided}.
The self-braidings $\beta_{x,x} \in \End(x \tensor x) = 
\pi_1 \DZ \cat{C} $ of objects $x \in \cat{C}$ assemble into a 
quadratic form~\cite[Ch 8.4]{etingof2016tensor}
\begin{align*}
    q: \pi_0 \DZ \cat{C} &\to \pi_1 \DZ \cat{C}\\
    x &\mapsto \beta_{x,x}.
\end{align*}
The map $q$ encodes $\DZ \cat{C}$ up to braided monoidal
equivalence~\cite{eilenberg1954groups}. 


If $\cat{C}$ is a monoidal category with a smooth structure, the Drinfel'd 
centre of the underlying monoidal category
has a distinguished subcategory on objects with smooth half-braidings. 
In this note, we compute this smooth Drinfel'd centre 
for the String 2-groups $G_k$. 
The result is another instance of the close relationship between $G_k$ 
and the associated central extension of the loop group, $LG_k$. 
We prove that for compact connected semisimple $G$ and positive-definite
$k \in \H^4(\B G, \IZ)$, the Drinfel'd centre of $G_k$ is
\begin{equation*}
    \DZ G_k \simeq {(\Rep^k{LG})}^\times,
\end{equation*}
the 2-group of invertible objects and invertible morphisms in 
$\Rep^k LG $, as long as we exclude factors of $E_8$ at level $k=2$.\footnote{
$\Rep^{k=2} LE_8 $ contains a non-trivial 
invertible object, while $\DZ E_{8,2}$ does not.
The invertibility of the non-identity object in $\Rep^{k=2} LE_8$ may be viewed as an accident of
low level. See for example the computation in~\cite{fuchs1991simple}.}
The proof of the above statement is indirect: we compute the left
hand side explicitly, and show the resulting braided categorical group 
agrees with that on the right hand side.
\begin{question*}
    Can all of $\Rep^k LG$ be recovered as a generalised centre 
    of the corresponding String 2-group $G_k$?
\end{question*}

\subsection*{Statement of Results}

Let $G$ be a compact simple simply-connected Lie group, and 
$\mathfrak{g}$ its complexified Lie algebra. 
There is a unique smallest positive-definite form
$I: \mathfrak{g} \tensor \mathfrak{g} \to \IC$ which is $\mathrm{Ad}_G$-invariant
and satisfies $I(X, X) \in 2 \IZ$ for all coroots of $\mathfrak{g}$ 
(a coroot is an element $X \in \mathfrak{g}$ satisfying 
$e^{2 \pi \ii X}=1$).\footnote{
For $\SU(n)$, $I$ is the trace of the product
of matrices: $I(X,Y)=\operatorname{tr}{X Y}$.}
We use $\exp: \IC \to \IC^\times$ to denote the map 
$w \mapsto e^{2 \pi \ii w}$. Its restriction $\exp:\IR \to U(1)$ 
has kernel $\IZ \into \IR$.

\begin{thm*}
    The Drinfel'd centre of the String 2-group $G_k$ is the 
    braided categorical group with $\pi_0 \DZ G_k = Z(G)$, 
    $\pi_1 \DZ G_k = U(1)$,
    and braided monoidal structure encoded by the quadratic form
    \begin{align*}
        q: Z(G) &\to U(1) \\
        z &\mapsto \exp \tfrac{k}{2} I(\bar{z},\bar{z}).
    \end{align*}
    Here, $\bar{z} \in \mathfrak{g}$ denotes an arbitrary lift of $z \in Z(G)$
    to the Lie algebra $\mathfrak{g}$ of $G$.
\end{thm*}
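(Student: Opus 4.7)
The plan is to reduce the computation to cocycle manipulations in Segal--Mitchison cohomology and then to evaluate the resulting integrals against the basic Chern--Simons data. First I fix a smooth 3-cocycle $\omega \in Z^3(\B G, U(1))$ representing $k$, realising $G_k$ as the groupoid $G \times [\ast/U(1)]$ with associator $\omega$ in the model of~\cite{schommer2011central}.

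A pair $(x, \gamma) \in \DZ G_k$ carries a half-braiding $\gamma_y \colon x \tensor y \isoto y \tensor x$ for every $y$. At the level of isomorphism classes this forces $xy = yx$ in $G$, hence $x \in Z(G)$. Conversely, for $z \in Z(G)$ and each $y \in G$ the morphism $\gamma_y$ is an automorphism of the object $zy$, i.e.\ an element of $U(1)$, so a half-braiding is encoded by a smooth function $\gamma \colon G \to U(1)$. Also $\pi_1 \DZ G_k = U(1)$, since the trivial half-braiding on $\ONE$ is central for every scalar. Naturality of $\gamma$ with respect to tensor, after insertion of the associator, reduces to a 2-cocycle equation of the shape
\begin{equation*}
    \gamma(y_1 y_2) \; = \; \gamma(y_1)\, \gamma(y_2) \cdot (\tau_z \omega)(y_1, y_2),
\end{equation*}
where $\tau_z \omega \in Z^2(\B G, U(1))$ is an explicit slant product of $\omega$ by $z$. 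Existence of a half-braiding for $z$ is therefore the vanishing of $[\tau_z \omega]$ in $\H^2(\B G, U(1))$, and the set of such half-braidings is a torsor over $\H^1(\B G, U(1)) = \Hom(G, U(1))$, which vanishes for semisimple $G$; so the canonical half-braiding, if it exists, is unique.

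The main obstacle is producing an explicit primitive of $\tau_z \omega$ and evaluating the resulting braiding. I would choose $\omega$ to be the smooth Chern--Simons 3-cocycle associated to the basic 4-form $\tfrac{k}{2} I$ as in~\cite{schommer2011central}. For $z \in Z(G)$ the path $\sigma_z \colon [0,1] \to G$, $t \mapsto \exp(2\pi \ii t \bar z)$ lifts to $\mathfrak{g}$, and pulling the 3-cocycle back along $\sigma_z$ produces a canonical trivialisation $\gamma^z$ of $\tau_z \omega$ by integration over $[0,1]$. The braiding of $(z_1, \gamma^{z_1})$ and $(z_2, \gamma^{z_2})$ in $\DZ G_k$ is by definition $\beta_{z_1, z_2} = \gamma^{z_1}(z_2) \in U(1)$; substituting the primitive collapses this to the integral of $\tfrac{k}{2} I$ over a 2-simplex in $\mathfrak{g}$ with vertices $0,\ \bar z_1,\ \bar z_1 + \bar z_2$. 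Specialising to $z_1 = z_2 = z$ yields $q(z) = \exp \tfrac{k}{2} I(\bar z, \bar z)$, with independence of the lift $\bar z$ following from $I(X,X) \in 2\IZ$ on coroots (which exponentiate to $1$ in $G$). Since a braided 2-group is determined up to equivalence by the triple $(\pi_0, \pi_1, q)$~\cite{eilenberg1954groups}, this identifies $\DZ G_k$ as stated.
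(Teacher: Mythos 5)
Your first half (the obstruction-theoretic part) is essentially the paper's Proposition~\ref{prop:exactSequence} and Corollary~\ref{cor:pi0isZG}: the hexagon equation becomes $d\gamma = \omega(-,-|z)$, existence is obstructed by a class in $\H^2(\B G,U(1))$ and the choices form a torsor over $\H^1(\B G,U(1))$. But your setup has a genuine problem in the smooth setting: for $k\neq 0$ the String 2-group $G_k$ \emph{cannot} be realised as $G\times[\ast/U(1)]$ with a globally smooth associator $\omega\colon G^{\times 3}\to U(1)$ -- the underlying groupoid of $G_k$ is a nontrivial $[\ast/U(1)]$-gerbe over $G$, and the class $k$ lives in Segal--Mitchison cohomology, represented by a triple $(\lambda,\mu,\omega)$ on a simplicial cover of $\B G$, not by a single smooth function on $G^{\times 3}$ (the paper remarks explicitly that the skeletal model is unavailable smoothly). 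Consequently a half-braiding is not a smooth function $\gamma\colon G\to U(1)$ but a Segal--Mitchison cochain defined on a cover $V\onto G$; porting the skeletal argument to this setting is precisely the technical content of the second half of the paper's proof of Proposition~\ref{prop:exactSequence}, and your argument silently skips it. (Note also that without this care, the obstruction and torsor groups you need are the Segal--Mitchison groups $\H^2(\B G,U(1))\simeq\H^3(\B G,\IZ)=0$ and $\H^1(\B G,U(1))\simeq\H^2(\B G,\IZ)=0$; you never actually establish that the obstruction vanishes, deferring instead to an explicit primitive.)

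The second, larger gap is that explicit primitive. The assertions that pulling the ``Chern--Simons 3-cocycle'' back along $t\mapsto\exp(2\pi\ii t\bar z)$ and ``integrating over $[0,1]$'' yields a trivialisation of $\tau_z\omega$, and that the braiding then ``collapses to the integral of $\tfrac{k}{2}I$ over a 2-simplex with vertices $0,\bar z_1,\bar z_1+\bar z_2$'', are exactly the computations a proof must supply, and neither is carried out; the latter is not even well-defined as stated (integrating a \emph{symmetric} bilinear form over a 2-simplex requires specifying a density, and the honest answer must see the non-symmetric splitting $J_k$ with $J_k+J_k^t=-kI$, since the braiding $\beta_{z_1,z_2}$ is not symmetric even though $q(z)=\beta_{z,z}$ is). The paper avoids this entirely by a different device: it restricts the half-braiding to a maximal 2-torus $T_{J_k}$, where Ganter's \emph{strict} model makes $\DZ T_{J_k}$ and its braiding $\lambda(x')\exp(J(x',x))$ completely explicit (Proposition~\ref{prop:DZofTorus}), and then uses injectivity of the maximal compact sub-2-group into $T$ (Lemma~\ref{lem:maxCpctInjects}) to pin down the restriction map and read off $q(z)=\exp\bigl(\tfrac{k}{2}I(\bar z,\bar z)\bigr)$. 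If you want to pursue your direct route, you would need to produce an actual cocycle model (e.g.\ on a simplicial cover adapted to the exponential map) and verify the coboundary identity; as written, the computation that produces the quadratic form is asserted rather than proved.
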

The inner product $\tfrac{1}{2}I(\bar{z},\bar{z})$ is a real number. 
Its values on different lifts of $z \in Z(G)$ differ by integers. 
By composing with $\exp$, we get a well-defined map $Z(G) \to U(1)$.

After a choice of maximal torus of $G$, every element of the centre
lifts to an element of the coweight lattice (see Section~\ref{sec:stringGrps}).
Their norms under $I$ can be found e.g. in~\cite{bourbaki1994lie}.
We list the results for Lie groups with non-trivial centre in 
Table~\ref{tab:resultsTable}. The centreless Lie groups $E_8,F_4,G_2$ all
have trivial Drinfel'd centre. 

\begin{table}[H]
\centering
\begin{tabular}{l|c|c|c}
Type       & $G_\mathrm{k}$      & $Z(G)$                                           & 
$q(z)=\exp\left(\tfrac{k}{2}I(\bar{z},\bar{z})\right)$ \\ \hline              
\rowcolor{Gray!20}
$A_{n-1}$  & $\SU(n)_k$      & $\IZ/n\langle\omega_1\rangle$               & 
$q(\omega_1)=\exp\left(\tfrac{k \cdot (n-1)}{2n}\right)$\\              
$B_{n \geq 2}$    & $\mathrm{Spin}(2n+1)_k$ & $\IZ/2\langle\omega_1\rangle$               & 
$q(\omega_1)=\exp\left(\tfrac{k}{2}\right)={(-1)}^k$\\              
\rowcolor{Gray!20}
$C_{n \geq 2}$    & $\mathrm{Sp}(n)_k$      & $\IZ/2\langle\omega_n\rangle$               & 
$q(\omega_n)=\exp\left(\tfrac{k\cdot n}{2}\right)={(-1)}^{k\cdot n}$\\              
$D_{2n+1}$   & $\mathrm{Spin}(4n+2)_k$   & $\IZ/4\langle\omega_{2n+1}\rangle$      & 
$q(\omega_{2n+1})=\exp\left(\tfrac{k \cdot (2n+1)}{8}\right)$\\              
\rowcolor{Gray!20}
\multirow{2}{*}{\centering $D_{2n\geq 4}$} & \multirow{2}{*}{$\mathrm{Spin}(4n)_k$}    &
\multirow{2}{*}{$\IZ/2\langle\omega_{2n-1},\omega_{2n}\rangle$} & 
\vtop{ 
\hbox{\strut $q(\omega_{2n-1})=q(\omega_{2n})=\exp\left(\tfrac{k\cdot n}{4}\right)={\ii}^{k\cdot n}$}
\hbox{\strut $q(\omega_{2n-1}+\omega_{2n})=\exp\left(\tfrac{k}{2}\right)={(-1)}^k$}
}\\ 
$E_{6}$     & $E_{6,k}$ & $\IZ/3\langle\omega_1\rangle$ &
$q(\omega_1)=\exp\left(\tfrac{2k}{3}\right)$\\              
\rowcolor{Gray!20}
$E_{7}$     & $E_{7,k}$ & $\IZ/2\langle\omega_7\rangle$ &
$q(\omega_1)=\exp\left(\tfrac{k}{4}\right)={\ii}^k$\\              
\end{tabular}
\caption{The Drinfel'd centre of $G_k$ (displayed for groups with nontrivial centre).
We denote by $\omega_i$ the $i$th \emph{co}weight.  The conventions for the numbering
are taken from~\cite{bourbaki1994lie}.} 
\label{tab:resultsTable}
\end{table}

A crucial step in our proof of the above theorem is the calculation of the 
centre of String 2-groups for $G=T$ a torus. 
They are also called \emph{categorical tori}~\cite{ganter2018categorical}. 
Let $\mathfrak{t}=\Lie(T)$ be the Lie algebra of a torus $T$, 
$\Lambda = \Hom(T,U(1)) \subset \mathfrak{t}^\ast$ its character 
lattice and $\Pi=\Hom(U(1),T) \subset \mathfrak{t}$ 
its cocharacter lattice.
The group $\H^4(\B T,\IZ)$ is the group of symmetric bilinear forms 
$\langle\cdot,\cdot\rangle:\mathfrak{t} \tensor \mathfrak{t} \to \IR$,
such that $\langle \pi, \pi \rangle \in 2\IZ$ for all $\pi \in \Pi$.
To describe the String 2-group associated to such a bilinear form,
we pick a (not-necessarily-symmetric) bilinear form 
$J:\mathfrak{t} \tensor \mathfrak{t} \to \IR$ which restricts to an integral
form on $\Pi \tensor \Pi$ and satisfies $J(x,y)+J(y,x)=-\langle x, y \rangle$ 
for all $x,y \in \mathfrak{t}$. 

The computation of the smooth Drinfel'd centre of the associated categorical 
torus $T_J$ was sketched in~\cite{freed2010topological}.
We provide a proof of the following statement in Section~\ref{sec:catTori}.
\begin{prop*}
    The Drinfel'd centre of $T_J$ has Lie group of objects 
    \begin{equation*}
        \pi_0 \DZ T_J = (\Lambda \dirSum \mathfrak{t})/\Pi, 
    \end{equation*}
    where the inclusion $\Pi \into \Lambda \dirSum \mathfrak{t}$ is 
    $\pi \mapsto (-J(\cdot,\pi)-J(\pi,\cdot),\pi)$.
    Further, $\pi_1 \DZ T_J = U(1)$, and the braided monoidal
    structure is encoded by the quadratic form
    \begin{align*}
        q: (\Lambda \dirSum \mathfrak{t})/\Pi &\to U(1) \\
        [\lambda,x] &\mapsto \lambda(x) \exp(J(x,x)).
    \end{align*}
\end{prop*}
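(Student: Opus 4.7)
The plan is to work directly with the explicit model of $T_J$ as a smooth 2-group, whose objects are parameterized by points of $\mathfrak{t}$ (projecting to $T = \mathfrak{t}/\Pi$ at the level of $\pi_0$) and whose tensor product on morphisms is twisted by $J$. An object of $\DZ T_J$ is a pair $(X,\gamma)$ with $X \in T$ and $\gamma_Y : X \tensor Y \iso Y \tensor X$ a smooth natural isomorphism satisfying the hexagon equation.

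First, I parameterize smooth half-braidings. Because the underlying group $T$ is abelian, $X \tensor Y$ and $Y \tensor X$ coincide as objects, so each $\gamma_Y$ is a phase in $\Aut(X+Y) = U(1)$. Choosing a lift $\tilde X \in \mathfrak{t}$, a smooth half-braiding becomes a smooth function $\gamma : \mathfrak{t} \to U(1)$. Naturality with respect to the lattice morphisms $(\pi,1):Y \to Y+\pi$ pins down $\gamma(Y+\pi)/\gamma(Y)$ as the explicit $J$-phase by which $\id_X \tensor (\pi,1)$ and $(\pi,1) \tensor \id_X$ differ in $T_J$. The hexagon equation, expanded using the $J$-twisted associator, further forces $\gamma(Y+Z) / (\gamma(Y)\gamma(Z))$ to be an explicit $J$-phase in $(\tilde X, Y, Z)$; a standard computation with the Eilenberg--MacLane splitting identifies $\gamma$ as the product of a character $\lambda \in \Lambda$ and a distinguished $J$-quadratic phase in $(\tilde X, Y)$. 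Smoothness of $\gamma$ is what forces $\lambda$ to lie in $\Lambda$ rather than being an arbitrary function.

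Next I compute isomorphism classes. Two half-braided objects $(X,\gamma)$ and $(X',\gamma')$ are isomorphic in $\DZ T_J$ iff there is a morphism $X \to X'$ in $T_J$ -- equivalently $X' - X = \pi \in \Pi$ -- intertwining the half-braidings. The intertwining condition, read off by applying naturality to the morphism $(\pi,1):\tilde X \to \tilde X + \pi$ and using the $J$-twist of the tensor product of morphisms, forces $\lambda$ to shift by the character $-J(\cdot,\pi) - J(\pi,\cdot) \in \Lambda$ (integral on $\Pi$ because $J(\pi_1,\pi_2)+J(\pi_2,\pi_1) = -\langle\pi_1,\pi_2\rangle \in \IZ$). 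This yields
\begin{equation*}
    \pi_0 \DZ T_J = (\Lambda \dirSum \mathfrak{t}) / \Pi
\end{equation*}
with the stated embedding $\pi \mapsto (-J(\cdot,\pi)-J(\pi,\cdot),\pi)$.

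Finally, $\pi_1 \DZ T_J = U(1)$: the forgetful functor $\DZ T_J \to T_J$ is an isomorphism on automorphisms of the monoidal unit, and $\pi_1 T_J = U(1)$. For the braiding, the construction of $\DZ$ gives $\beta_{(X_1,\gamma_1),(X_2,\gamma_2)} = (\gamma_1)_{X_2}$, so the self-braiding at $(X,\gamma)$ is $\gamma(X)$, which by the explicit formula above equals $\lambda(X) \exp(J(X,X))$. One checks that this is well-defined on the quotient using $J(\pi,\pi) \in \IZ$ and $\lambda(\pi) \in \IZ$ for $\pi \in \Pi$, recovering the stated quadratic form. The main technical obstacle is the bookkeeping in the second paragraph: the hexagon and naturality constraints involve overlapping $J$-phases, and ensuring their compatibility hinges on the identity $J(x,y)+J(y,x) = -\langle x,y\rangle$ together with integrality of $J$ on $\Pi \tensor \Pi$.
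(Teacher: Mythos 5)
Your proposal is correct and follows essentially the same route as the paper's proof: parameterise half-braidings on a lift $\tilde X\in\mathfrak{t}$ via naturality and the hexagon to get $\gamma(Y)=\lambda(Y)\exp(J(Y,\tilde X))$ with $\lambda\in\Lambda$, quotient by the morphisms of centre pieces lying over $\Pi$ to obtain $(\Lambda\dirSum\mathfrak{t})/\Pi$ with the stated embedding, and read off $q$ from $\beta_{(X_1,\gamma_1),(X_2,\gamma_2)}=\gamma_1(X_2)$. One simplification you could note: in this model the associator of $T_J$ is genuinely trivial and the morphism-level $J$-twist $\exp(J(\pi,x'))$ only sees the $\Pi$-component of the first morphism (which vanishes for the endomorphisms $\gamma(Y)$), so the hexagon collapses to the bare character equation $\gamma(Y+Z)=\gamma(Y)\gamma(Z)$ with no residual $J$-phase, and all the $J$-dependence enters through naturality against the morphisms $(\pi,1)$.
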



Every compact connected Lie group $G$ fits into a short exact sequence
$Z \into \tilde{G} \onto G$. Here 
$\tilde{G} = T \times \Pi_i G_i$ is a product of simple
simply-connected groups $G_i$ with a torus $T$,
and $Z \subset Z(\tilde{G})$ is a finite subgroup of its 
centre~\cite[Cor V.5.31]{mimura1991topology}.
A central extension of $G$ by $[\ast/U(1)]$ 
pulls back to a central extension of $\tilde{G}$ by $[\ast/U(1)]$, 
giving the String 2-group $\tilde{G}_k$.
It is classified by the pullback degree 4 cohomology class in $\H^4(\B \tilde{G},\IZ) \simeq 
\H^4(\B T, \IZ) \times \Pi_i \H^4(\B G_i,\IZ)$ and we write $k=\left(J,\{k_i\}\right)$.

\begin{thm*}
    The group $Z=\ker{(\tilde{G} \onto G)}$ admits a unique lift to 
    $\DZ \tilde{G}_{k}$, and the quadratic form $q$ is trivial on $Z$.
    The Drinfel'd centre of $G_k$ is the subquotient
    \begin{equation*}
        \pi_0 \DZ G_k = Z^\bot/Z,
    \end{equation*}
    where $Z^\bot$ denotes the subgroup of $\pi_0\DZ \tilde{G}_k$ 
    on elements satisfying $q(x+z)=q(x)$ for all $z \in Z$.
    The quadratic form on $\pi_0 \DZ \tilde{G}_k$ descends to a quadratic 
    form on $\pi_0 \DZ G_k$, and this quadratic form describes
    the braided monoidal structure of $\DZ G_k$.

    The Drinfel'd centre of the covering String 2-group $\tilde{G}_{k} 
    = \times_i G_{i,k_i} \times T_J$ is given by
    \begin{equation*}
        \pi_0\DZ\tilde{G}_{\left(J,\{k_i\}\right)}=
        (\Lambda \dirSum \mathfrak{t})/\Pi \times \Pi_i G_{k_i},
    \end{equation*}
    $\pi_1 \DZ\tilde{G}_k=U(1)$ and quadratic form the product 
    of the quadratic forms $q_{k_i}$ and $q_J$ computed
    previously.
\end{thm*}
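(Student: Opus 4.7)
The plan is to tackle the two parts of the theorem separately: first the product description of $\DZ \tilde{G}_k$, then the subquotient description of $\DZ G_k$.

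For the product description, $\tilde{G}=T\times\prod_i G_i$ splits, and since $\H^{<4}(\B G_i,\IZ)$ vanishes for simply-connected compact simple $G_i$, the Künneth formula gives $\H^4(\B\tilde{G},\IZ) \iso \H^4(\B T,\IZ)\oplus\bigoplus_i \H^4(\B G_i,\IZ)$. A level $k = (J,\{k_i\})$ therefore induces a splitting of smooth 2-groups $\tilde{G}_k \simeq T_J \times \prod_i G_{i,k_i}$. The Drinfel'd centre of a product of monoidal categories is the product of Drinfel'd centres (a half-braiding with a product factors canonically through half-braidings with each factor, and this extends to the smooth setting since the smoothness condition is factor-wise). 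Combining with the Proposition on categorical tori and the Theorem on simple simply-connected $G$ from earlier in the paper gives the desired description of $\pi_0\DZ\tilde{G}_k$, and $q$ is the sum of the factor quadratic forms because self-braidings of product objects decompose as products of factor self-braidings.

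For the descent to $G_k$, I would first construct the canonical lift of $Z$ to $\DZ \tilde{G}_k$. The quotient map $\tilde{G}_k \to G_k$ (with $\pi_0$-part $\tilde{G}\onto G$ and $\pi_1$-part the identity on $U(1)$) sends each $z\in Z$ to the unit $\ONE \in G_k$, whose canonical trivial half-braiding pulls back to a canonical half-braiding on $z$ in $\tilde{G}_k$. Uniqueness of the resulting homomorphism $Z \to \pi_0\DZ\tilde{G}_k$ follows because two half-braidings on the same element differ by a smooth character $\tilde{G}\to U(1)$; as each $G_i$ is perfect, $\Hom(\tilde{G},U(1))=\Hom(T,U(1))=\Lambda$, which is torsion-free, so $\Hom(Z,\Lambda)=0$. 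Triviality of $q$ on $Z$ is then immediate: the canonical lift pushes forward to the unit of $\DZ G_k$ with its trivial self-braiding, so $q(\tilde{z})=\beta_{\tilde{z},\tilde{z}}=1$.

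Finally, I would identify $\pi_0 \DZ G_k = Z^\bot/Z$ by analysing when a half-braiding on $\tilde{x}\in\tilde{G}_k$ descends to one on $x\in G_k$. The descent condition is that the full braiding of $\tilde{x}$ with each $z\in Z$ in $\DZ\tilde{G}_k$ is trivial; by polarisation this double braiding is the bilinear form $B(\tilde{x},z)=q(\tilde{x}+z)q(\tilde{x})^{-1}$, using $q|_Z=0$, so the descent condition is exactly $\tilde{x}\in Z^\bot$. Two lifts of the same $x\in G$ differ by an element of $Z$, yielding $\pi_0\DZ G_k = Z^\bot/Z$, and the quadratic form descends to this subquotient because $q$ is $Z$-invariant on $Z^\bot$ and trivial on $Z$. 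The main obstacle I anticipate is to establish this descent of half-braidings rigorously in the smooth setting: one must show that smooth $Z$-equivariant half-braidings on lifts correspond exactly to smooth half-braidings on the quotient, which requires a careful analysis of the fibration $\tilde{G}_k \to G_k$ and of its induced maps on spaces of smooth half-braidings.
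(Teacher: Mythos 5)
Your overall route is the same as the paper's: split $\tilde{G}_{\pi^\ast k}$ as a product $T_J \times \Pi_i G_{i,k_i}$ and compute each factor, then analyse which centre pieces descend along $\tilde{G}_{\pi^\ast k} \to G_k$. Two of your micro-arguments differ from the paper's, and both are correct. For uniqueness of the lift of $Z$ you use that the fibre of $\pi_0\DZ\tilde{G}_{\pi^\ast k} \to Z(\tilde{G})$ is $\H^1(\B\tilde{G},U(1)) = \Lambda$ (Proposition~\ref{prop:exactSequence}), which is torsion-free, so $\Hom(Z,\Lambda)=0$; the paper instead observes that a finite subgroup must land in the maximal compact sub-2-group, which injects into $\tilde{G}$ by Lemma~\ref{lem:maxCpctInjects}. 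For $q\restriction_Z = 1$ you pull back the identically trivial half-braiding on the unit of $G_k$ and evaluate it at $z$; the paper instead computes $q(z) = \exp\tfrac12 I(\bar{z},\bar{z})$ and identifies its vanishing with the integrality condition cutting out the image of $\H^4(\B G,\IZ) \into \H^4(\B\tilde{G},\IZ)$ (Lemma~\ref{lem:MapOnH4AndQuadraticForms}), which yields a sharper iff statement, but your direct argument suffices here.

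The one genuine gap is the sentence asserting that \emph{the descent condition is that the full braiding of $\tilde{x}$ with each $z \in Z$ is trivial}: this is the crux of the theorem and you state it without proof. What descent actually requires is that the half-braiding $\gamma:\tilde{G} \to U(1)$ of $\tilde{x}$ satisfy $\gamma(\bar{y}+z) = \gamma(\bar{y})$ for \emph{all} $\bar{y} \in \tilde{G}$ --- only then does $\gamma$ define a Segal--Mitchison cochain on $G$ with respect to the cover $\pi:\tilde{G} \onto G$ and satisfy the hexagon downstairs. The paper bridges the distance to your condition as follows: choosing the associator cocycle $\pi^\ast\omega$ to be $Z$-equivariant in all variables, the hexagon equation forces $\gamma(\bar{y}+z)/\gamma(\bar{y})$ to be independent of $\bar{y}$, so the functional equation collapses to the single scalar condition $\gamma(z)=1$; and because the canonical half-braiding on $z$ is identically $1$, the number $\gamma(z)$ equals the double braiding $q(\tilde{x}+z)/\bigl(q(\tilde{x})q(z)\bigr)$, which is where your polarisation step finally applies. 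Without this reduction (or an appeal to the local-modules theorem $\DZ\cat{C}_A = \mathrm{loc}_A\DZ\cat{C}$, which the paper invokes only in Remark~\ref{rmk:alternativeProofOfQuotientRule} as an alternative), the identification of the descent condition with membership in $Z^\bot$ is unsupported. You should also state explicitly the easy converse --- that every centre piece of $G_k$ arises by pulling back its half-braiding along $\pi$, landing in $Z^\bot$ --- since that is what upgrades the containment to the equality $\pi_0\DZ G_k = Z^\bot/Z$.
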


In the simply-connected case, every element of the 
centre admits a half-braiding,
but in general the subset of the centre admitting
a half-braiding varies with $k$. 
We present the result for the case $G=\SO(4)$ here 
(the computation is sketched in more detail in Example~\ref{ex:DZofSO4}).
The relevant cohomology group $\H^4(\B \SO(4),\IZ)$ has
two generators: the first Pontryagin class $p_1$ and the Euler class $\chi$. 
For the cohomology class $k=a \cdot p_1 + b \cdot \chi$, 
the Drinfel'd centre is given by
\begin{equation*}
    \DZ \SO(4)_k = 
    \begin{cases}
        \Vec_{\IZ/2}^\times & 2 a + b \equiv 0 \mod 4 \\
        \sVec^\times & 2 a + b \equiv 2 \mod 4 \\
        \Vec^\times & \text{ else.}
    \end{cases}
\end{equation*}
Here, $\Vec_{\IZ/2}^\times$ denotes the trivially braided 2-group 
$[\ast/U(1)] \times \IZ/2$,
$\sVec^\times$ is \emph{super}-$\IZ/2$ with non-trivial self-braiding of $-1 \in \IZ/2$,
and $\Vec^\times=[\ast/U(1)]$ is the trivial Drinfel'd centre whose only object
is the monoidal unit. 
As the notation indicates, they are the maximal braided sub-2-groups of the
braided fusion categories $\Vec[\IZ/2]$, $\sVec$ and $\Vec$.

\subsection*{Structure}
Section~\ref{sec:smCats} is a quick introduction to 
groupoids with smooth structure.
We review smooth 2-groups in Section~\ref{sec:2Grps},
recalling in particular the model for the String 2-groups 
given in~\cite{schommer2011central}.
In Section~\ref{sec:smDZ}, we discuss centres of smooth 2-groups 
as a special case of the notion of centre defined in~\cite{street2004centre}.
We compute this centre for categorical tori
in Section~\ref{sec:catTori} 
and for the String 2-groups in Section~\ref{sec:stringGrps}.
We end by showing that $\DZ G_k = {(\Rep^k LG)}^\times$
for $G$ compact connected semisimple.

\section*{Acknowledgements}
First and foremost, I thank my supervisor Andr\'{e} Henriques,
whose support and guidance have been invaluable to me.
I have benefitted from many more conversations while writing this paper. 
I particularly thank Luciana Basualdo Bonatto, Thibault D\'{e}coppet, 
Chris Douglas, Nora Ganter, Kiran Luecke, Jan Steinebrunner and Konrad Waldorf
for useful discussions and feedback on earlier drafts of this document.

\section{Smooth groupoids}
\label{sec:smCats}
We use $\Man$ to denote the category of paracompact smooth manifolds and smooth maps.
We equip it with the structure of a site by declaring
covers to be surjective submersions $f:Y \onto M$,
and denote the $n$-fold fibre product of a cover along itself
by $Y^{[n]} \define Y \times_M Y \times_M \cdots Y$.

\subsection*{Lie groupoids}
A \emph{Lie groupoid} $G_\bullet$ is a groupoid object in $\Man$.
Its data are two manifolds $G_1,G_0$ with a pair of
surjective submersions $s,t:G_1 \rightrightarrows G_0$, and smooth maps
\begin{align*}
    \comp: G_1 \times_{G_0} G_1 \to G_1 &&
    \id_{-}: G_0 \to G_1 &&
    (-)^{-1}: G_1 \to G_1,
\end{align*}
implementing composition, identities and inverses, respectively.
A \emph{smooth functor} $G_\bullet \to H_\bullet$ is a pair of 
smooth maps $G_0 \to H_0$, $G_1 \to H_1$ that preserve identities and composition.
A \emph{smooth natural transformation} between two smooth functors is a smooth map 
$G_0 \to H_1$ that makes the usual naturality diagrams commute ---
see~\cite{moerdijk1997orbifolds} for an introduction.

\begin{example}
\label{ex:ManLieGpd}
    Every manifold $M$ defines a Lie groupoid $M \rightrightarrows M$,
    with only identity morphisms.
\end{example}

\begin{example}
    A Lie group $G$ has an associated Lie groupoid $[\ast/G]\define G \rightrightarrows \ast$.
\end{example}

A smooth functor $F_\bullet:G_\bullet \to H_\bullet$ is \emph{fully faithful} if 
\begin{equation*}
    \begin{tikzcd}
        G_1 \dar["{(s,t)}"] \rar["F_1"] & H_1 \dar["{(s,t)}"] \\
        G_0  \times G_0 \rar["F_0 \times F_0"] & H_0 \times H_0
    \end{tikzcd}
\end{equation*}
is a pullback square, and it is 
\emph{essentially surjective} if
$s \comp p_2: G_0 \times_{H_0} H_1 \to H_0$ is a surjective submersion (the 
pullback $G_0 \times_{H_0} H_1$ is formed along the maps $F_0:G_0 \to H_0$ and
$t:H_1 \to H_0$).
A fully faithfully essentially surjective functor $F_\bullet$ is a 
\emph{smooth equivalence}~\cite{moerdijk1997orbifolds}.
Given a submersion $f:Y \to G_0$ from a manifold to the space of objects of a 
Lie groupoid $G_\bullet$, the \emph{pullback groupoid}
$f^\ast G_\bullet$ is the groupoid $f^\ast G_1 \rightrightarrows Y$,
with manifold of morphisms given by the pullback
\begin{equation*}
\begin{tikzcd}
f^\ast G_1 \arrow[d] \arrow[r]  & G_1 \arrow[d, "{(s,t)}"]                        \\
Y \times Y \arrow[r, "{f \times f}"] & G_0 \times G_0 
\arrow[lu, "\lrcorner" very near end, phantom].
\end{tikzcd}
\end{equation*}
It is constructed such that the natural functor
$f^\ast G_\bullet \to G_\bullet$ is fully faithful.
When $f$ is also surjective (and hence a cover), 
the functor $f^\ast G_\bullet \to G_\bullet$ is a
smooth equivalence.

\begin{example}
\label{ex:CechGpd}
    The \emph{\v Cech groupoid} $f^\ast M$ associated to a cover $f:Y \onto M$
    is $Y^{[2]} \rightrightarrows Y$ with composition
    given by the map $Y^{[2]}\times_Y Y^{[2]} = Y^{[3]} \to Y^{[2]}$
    projecting out the middle factor.
    This Lie groupoid is equivalent to $M \rightrightarrows M$.
\end{example}

A crucial example for our computation is the Lie groupoid built from
a smooth \v Cech 2-cocyle (Example~\ref{ex:cocycleGpd}).
We briefly recall how to compute smooth
\emph{\v Cech cohomology} $\check \H^\ast(M,A)$ of a manifold $M$ with coefficients in
an abelian Lie group $A$.
The cochain complex computing \v Cech cohomology
with respect to a cover $Y \onto M$ is given by
\begin{equation*}
    C^p_Y(M,A) \define \Cinf(Y^{[p+1]},A) 
\end{equation*}
with differential $d_\mathrm{Cech}$ the alternating sum over the pullbacks
along the maps $\delta_i:Y^{[p]} \to Y^{[p-1]}$ that project out the $i$-th factor:
\begin{equation*}
    \delta_i^\ast: \Cinf(Y^{[p-1]},A) \to \Cinf(Y^{[p]},A).
\end{equation*}
Any two covers $Y,Z \onto M$ admit a common refinement $Y \times_M Z \onto M$.
The assignment of cohomology groups assembles into a (contravariant) functor from the category of
covers of $M$ into the category of graded abelian groups.
\v Cech cohomology of $M$ is the colimit over this diagram. A cover is \emph{good}
if $Y^{[p]}$ is homotopy equivalent to a discrete space for all $p$,
ie.\ if each fibre product is a disjoint union of open balls. 
The collection of good covers is a cofinal subset of the poset of covers of $M$.
It is well-known that \v Cech cohomology with respect to any good cover
computes sheaf cohomology for a paracompact manifold.
Hence, the colimit we used to define \v Cech cohomology recovers sheaf cohomology,
and can be computed using \emph{any} good cover.

\begin{example}
\label{ex:cocycleGpd}
    A cocycle representing $[\lambda] \in \check \H^2(M,A)$ is a map 
    $\lambda:Y^{[3]} \to A$ for some cover $Y \onto M$.
    The \emph{principal $[\ast/A]$-bundle classified by $\lambda$}
    is the Lie groupoid
    $E^\lambda: Y^{[2]} \times A \rightrightarrows Y$,
    with source and target given by the two projections 
    $Y^{[2]} \rightrightarrows Y$,
    and composition given by 
    \begin{align*}
        (Y^{[2]} \times A) \times_{Y} (Y^{[2]} \times A) &\to 
        (Y^{[2]} \times A) \\
        \left((y_0,y_1,a), (y_1,y_2,b)\right) &\mapsto 
        (y_0,y_2,a+b+\lambda(y_0,y_1,y_2)).
    \end{align*}
    The cocycle condition ensures that this is associative 
    (a condition checked on $Y^{[4]}$).
\end{example}
    
Lie groupoids, smooth functors and smooth natural transformations
form a 2-category which we denote $\LieGpd$.
This 2-category of groupoids internal to $\Man$ 
is not a good 2-category of
smooth groupoids, because smooth functors are too strict
to implement the principle of equivalence:
A smooth equivalence $f^\ast G_\bullet \to G_\bullet$ 
usually does not admit an inverse.
Any sufficiently generic cover $f:\coprod_i U_i \onto M$ provides a counterexample. 

The standard way to proceed is to invert these morphisms by force.
We follow~\cite{pronk1996etendues} in writing $\LieGpdW$ to denote
the localisation of $\LieGpd$ at smooth equivalences.
Every 1-morphism in $\LieGpdW$ can be represented by an 
\emph{anafunctor}~\cite{roberts2012internal}: a span 
\begin{equation*}
    G_\bullet \eqot f^\ast G_\bullet \to H_\bullet,
\end{equation*}
whose left leg is a smooth equivalence.
Morphisms between anafunctors are defined as natural transformations on a common refinement 
of the involved covers~\cite{roberts2012internal}. In particular,
if two anafunctors have the same left leg $G_\bullet \eqot f^\ast G_\bullet$, 
then all morphisms between them are represented by natural transformations between their right legs
$f^\ast G_\bullet \to H_\bullet$.
The introduction of anafunctors manifestly inverts smooth equivalences. 
They can be viewed as smooth functors defined on some cover of the source.

\begin{example}
    The category of anafunctors from $M$ to $[\ast/G]$ is equivalent to
    the category of $G$-principal bundles over $M$. 
    The subcategory of smooth functors $M \to [\ast/G]$
    is that of topologically trivial bundles.
\end{example}

\subsection*{Differentiable stacks}
Every Lie groupoid $G_\bullet$ defines a functor of bicategories
$\Hom(-,G_\bullet):\Man^\opp \to \Gpd$, sending a manifold
$M$ to the groupoid of smooth functors $(M \rightrightarrows M) \to G_\bullet$.
Given any functor $\st{F}:\Man^\opp \to \Gpd$, there is a natural way to evaluate it
on the \v Cech groupoid (Example~\ref{ex:CechGpd}) associated to a cover $f:Y \onto M$
(we suppress natural 2-cells in the diagram):
\begin{equation*}
    \st{F}(f^\ast M) \define 
    \operator{lim}\left(\st{F}(Y) \rightrightarrows \st{F}(Y^{[2]}) \mathrel{\substack{\textstyle\rightarrow\\[-0.6ex]
              \textstyle\rightarrow \\[-0.6ex]
              \textstyle\rightarrow}}
              \st{F}(Y^{[3]})\right).
\end{equation*} 
The limit on the right hand side is represented by the so-called \emph{groupoid of descent data}
(see e.g.~\cite{vistoli2004notes}) for $\st{F}$ with respect to $Y$.
For $\st{F}=\Hom(-,G_\bullet)$, this is the groupoid of smooth
functors $\st{F}(f^\ast M) = \Hom(f^\ast M, G_\bullet)$.
The fact that smooth functors are too strict is reflected in the fact that
the restriction functor
$f^\ast \define \Hom(f,G_\bullet): \Hom(M,G_\bullet) \to \Hom(f^\ast M,G_\bullet)$ 
is generally not an equivalence of categories.

\begin{defn}
    A \emph{stack} (of groupoids over the site of manifolds) 
    is a functor of bicategories $\st{F}:\Man^\opp \to \Gpd$
    satisfying \emph{descent}:
    It sends coproducts to products $\st{F}(\coprod_i M_i) \eqto \prod_i \st{F}(M_i)$
    and for every cover $f:Y \onto M$, the restriction functor 
    $\st{F}(f):\st{F}(M) \to \st{F}(f^\ast M)$
    is an equivalence of groupoids.
\end{defn}

Stacks assemble into a bicategory, the full sub-bicategory
$\SmSt \subset [\Man^\opp,\Gpd]$ on functors which satisfy descent.
\begin{thm}\cite{giraud1966cohomologie}
    The forgetful functor $\SmSt \to [\Man^\opp, \Gpd]$ has a 
    left adjoint, the \emph{stackification functor}
    $L: [\Man^\opp, \Gpd] \to \SmSt$.
\end{thm}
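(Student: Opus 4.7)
The plan is to construct $L$ explicitly via a descent data construction and then verify the universal property. Given a presheaf of groupoids $\st{F}: \Man^\opp \to \Gpd$, I would define $L\st{F}(M)$ as the 2-colimit, over the filtered poset of covers $f: Y \onto M$, of the descent groupoids $\st{F}(f^\ast M)$. Concretely, an object of $L\st{F}(M)$ is a pair $(f, \xi)$ consisting of a cover $f: Y \onto M$ and an object of the descent groupoid $\st{F}(f^\ast M)$, i.e.\ an object $\xi_0 \in \st{F}(Y)$ together with a gluing isomorphism $\varphi \in \st{F}(Y^{[2]})$ satisfying a cocycle identity in $\st{F}(Y^{[3]})$. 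A morphism $(f,\xi) \to (f',\xi')$ is represented by a morphism between the restrictions of $\xi$ and $\xi'$ to some common refinement $Y \times_M Y' \onto M$, with two such morphisms identified whenever they agree on a further common refinement. Pullback along a smooth map $g: N \to M$ is defined by pulling back the cover $Y \onto M$ along $g$.

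Next I would verify that $L\st{F}$ is itself a stack. The coproduct axiom is immediate since any cover of a disjoint union restricts to covers of the components. For descent along a cover $h: Z \onto M$, the key observation is that a descent datum for $L\st{F}$ with respect to $h$ can, after passing to a refinement, be rewritten as a descent datum for $\st{F}$ with respect to a cover of $M$ factoring through $h$: this produces the required object of $L\st{F}(M)$ and shows that the restriction functor $L\st{F}(M) \to L\st{F}(h^\ast M)$ is essentially surjective. Full faithfulness follows similarly by taking common refinements of the covers involved in representing the morphisms.

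The unit $\eta_{\st{F}}: \st{F} \to L\st{F}$ sends $\xi \in \st{F}(M)$ to the trivial cover $(\id_M, \xi)$. The universal property then asserts that for any stack $\st{G}$, restriction along $\eta_\st{F}$ induces an equivalence of groupoids
\begin{equation*}
    \Hom_{\SmSt}(L\st{F},\st{G}) \eqto \Hom_{[\Man^\opp,\Gpd]}(\st{F},\st{G}).
\end{equation*}
Given a natural transformation $\alpha: \st{F} \to \st{G}$, one extends to $L\st{F}$ by sending a descent datum $(f, \xi)$ for $\st{F}$ to the unique (up to canonical isomorphism) object of $\st{G}(M)$ obtained by descending $\alpha(\xi) \in \st{G}(f^\ast M)$ along the stack condition for $\st{G}$. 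This is well-defined on morphisms by the same argument applied to morphism groupoids.

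The principal obstacle is 2-categorical coherence: because $\st{F}$ is a pseudofunctor, the cocycle conditions for descent data involve compositors $\st{F}(g) \comp \st{F}(f) \Rightarrow \st{F}(g \comp f)$, and when passing to common refinements of covers these must be tracked carefully to ensure that the colimit defining $L\st{F}(M)$ is strictly functorial in $M$ (or, at least, is a well-defined pseudofunctor), and that the unit $\eta$ and the induced extension $\tilde{\alpha}: L\st{F} \to \st{G}$ are themselves coherent pseudonatural. The cleanest way to handle this is to work throughout with anafunctor-style presentations so that all constructions are manifestly invariant under refinement, which is precisely the viewpoint already used in the treatment of $\LieGpdW$ above.
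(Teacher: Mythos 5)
The paper offers no proof of this statement --- it is cited directly from Giraud --- so the only question is whether your construction is correct. Your approach is the standard ``plus construction'': define $L\st{F}(M)$ as the filtered $2$-colimit over covers $Y \onto M$ of the descent groupoids $\st{F}(f^\ast M)$. The gap is that a \emph{single} application of this construction does not produce a stack when $\st{F}$ is an arbitrary pseudofunctor; it only does so when $\st{F}$ is already a prestack, i.e.\ when its Hom-presheaves are sheaves. The failure point is exactly the step you dispose of with ``full faithfulness follows similarly by taking common refinements'': passing to common refinements gives \emph{faithfulness} of $L\st{F}(M) \to L\st{F}(h^\ast M)$ (separatedness of the Hom-presheaves), but \emph{fullness} requires that a compatible family of morphisms over a cover glue to a morphism over the base, and your colimit only ever produces Hom-presheaves of the form $P^+$ for a presheaf of sets $P$ --- and $P^+$ is separated but is a sheaf only when $P$ was already separated.

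A concrete counterexample: let $\st{F}$ assign to each $M$ the groupoid with one object whose automorphism group is $P(M)$, for a non-separated presheaf of abelian groups $P$. Then the automorphism presheaf of the unit object of $L\st{F}$ is $N \mapsto \operatorname{colim}_{Y \onto N} \check{\H}^0(Y,P) = P^+(N)$, which is not a sheaf, so $L\st{F}$ violates descent (full faithfulness of restriction to a \v{C}ech groupoid forces the Hom-presheaves of a stack to be sheaves). The fix is standard: either apply your construction twice, or first replace $\st{F}$ by its associated prestack (sheafify the Hom-presheaves, keeping the objects) and then run your descent-data colimit once; with that modification the rest of your argument --- essential surjectivity via composing covers, the unit $\eta$, and the universal property obtained by descending $\alpha(\xi)$ through the stack condition on the target --- goes through, modulo the coherence bookkeeping you already flag.
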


\begin{defn}
    The \emph{stack presented by $G_\bullet$} is the 
    stackification of the associated 2-presheaf $\Hom(-,G_\bullet)$.
\end{defn}
The assignment $G_\bullet \mapsto \Hom(-,G_\bullet)^\shff$ 
defines a functor $\LieGpd \to \SmSt$.

\begin{defn}
    The category $\DiffSt$ of \emph{differentiable stacks} is the full
    subcategory of $\SmSt$ on the essential image of 
    $\LieGpd \to \SmSt$.
\end{defn}
\begin{thm}\cite{pronk1996etendues}
\label{thm:pronksThm}
     The functor $\LieGpd \to \DiffSt$, sending a Lie groupoid $G_\bullet$ 
     to the stackification of $\Hom(-,G_\bullet)$ 
     induces an equivalence
     \begin{align*}
         \LieGpdW \eqto \DiffSt.
     \end{align*}
\end{thm}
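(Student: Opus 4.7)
The plan is to prove Pronk's theorem in three stages: first, give an explicit model for $\LieGpdW$ using anafunctors via a bicategorical calculus of fractions; second, construct the comparison functor to $\DiffSt$ and confirm essential surjectivity; third, establish that it is fully faithful on Hom-groupoids, which will be the main work.

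For the first stage, I would verify that the class $W$ of smooth equivalences in $\LieGpd$ admits a bicategorical calculus of fractions. The axioms one needs are closure under composition, 2-out-of-3, stability under whiskering by 2-isomorphisms, and a pullback-like axiom: for every $F_\bullet:G_\bullet \to H_\bullet$ and $w:K_\bullet \to H_\bullet$ in $W$, there is a square completing $F_\bullet, w$ whose other leg is in $W$. The candidate is the pullback groupoid along $F_0:G_0 \to H_0$, which is a smooth equivalence because surjective submersions are stable under pullback. Once these axioms are checked, the general machinery of bicategorical fractions produces a model of $\LieGpdW$ whose 1-morphisms are anafunctors $G_\bullet \eqot f^\ast G_\bullet \to H_\bullet$ and whose 2-morphisms are equivalence classes of smooth natural transformations on common refinements of covers, matching the description given in the excerpt.

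For the second stage, the functor $G_\bullet \mapsto \Hom(-,G_\bullet)^\shff$ from $\LieGpd$ to $\DiffSt$ sends smooth equivalences to equivalences of stacks: a smooth equivalence $f^\ast G_\bullet \eqto G_\bullet$ becomes a local equivalence of 2-presheaves (locally on a cover of $M$, any functor $M \to G_\bullet$ lifts to $f^\ast G_\bullet$ by essential surjectivity, and the fully-faithfulness cuts out the correct Hom-sheaf), so stackification turns it into an equivalence. The universal property of the localisation then produces the desired functor $\Phi: \LieGpdW \to \DiffSt$. Essential surjectivity on objects is immediate from the definition of $\DiffSt$ as the essential image.

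The main obstacle is the third stage: showing that for all Lie groupoids $G_\bullet, H_\bullet$ the induced functor
\begin{equation*}
    \Hom_{\LieGpdW}(G_\bullet, H_\bullet) \eqto \Hom_{\DiffSt}\bigl(\Hom(-,G_\bullet)^\shff, \Hom(-,H_\bullet)^\shff\bigr)
\end{equation*}
is an equivalence of groupoids. For essential surjectivity on 1-morphisms, take a morphism of stacks $\phi$. Evaluating on the manifold $G_0$ produces an object of $\Hom(G_0, H_\bullet)^\shff(G_0)$, which by the explicit description of stackification is represented by a smooth functor $f^\ast G_0 \to H_\bullet$ for some cover $f:Y \onto G_0$. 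The naturality of $\phi$ applied to the source and target maps of $G_\bullet$, together with a further refinement of the cover to one pulled back along $s,t:G_1 \rightrightarrows G_0$, yields the smooth morphism data of an anafunctor $G_\bullet \eqot f^\ast G_\bullet \to H_\bullet$; descent for $\Hom(-,H_\bullet)^\shff$ ensures that functoriality identities, which a priori hold only after further localisation on $G_1$, actually hold on a common refinement. For full faithfulness on 2-morphisms, a 2-cell between stacky morphisms is locally a smooth natural transformation, and descent glues these locally defined natural transformations into a single one on a common refinement of the ambient covers. The careful bookkeeping of successively refining covers so that source, target, and composition data are all defined and compatible is the technical heart of the argument; the stack condition on $\Hom(-,H_\bullet)^\shff$ is what ultimately makes this bookkeeping work out.
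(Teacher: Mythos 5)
This theorem is not proved in the paper at all: it is imported as a black box from Pronk's work (and the anafunctor description of the localisation from Roberts), so there is no in-paper argument to compare against. Your three-stage outline is the standard proof strategy and matches what the cited reference actually does: verify that smooth equivalences admit a bicategorical calculus of fractions, so that $\LieGpdW$ has the explicit anafunctor model; observe that stackification inverts smooth equivalences and that essential surjectivity onto $\DiffSt$ holds by fiat; and then do the real work of showing the comparison is an equivalence on Hom-groupoids by evaluating a stack morphism on the atlas $G_0$, propagating along $s,t\colon G_1 \rightrightarrows G_0$, and using descent for $\Hom(-,H_\bullet)^\shff$ to glue the cover-local data. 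One point where your sketch is underspecified: in the square-completion axiom, given $F_\bullet\colon G_\bullet \to H_\bullet$ arbitrary and $w\colon K_\bullet \to H_\bullet$ in $W$, the completing object is not the pullback groupoid along $F_0$ (which need not be a submersion), but the weak (iso-comma) pullback with object manifold $G_0 \times_{H_0} H_1 \times_{H_0} K_0$; it is precisely the submersion formulation of essential surjectivity of $w$ that makes this a Lie groupoid and makes the projection to $G_\bullet$ a smooth equivalence. With that correction your outline is sound, though of course the third stage remains a sketch of the bookkeeping rather than the bookkeeping itself.
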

This tells us that inverting weak equivalences has the same effect as 
stackification. It allows us to use both the explicit description
of $\LieGpdW$ and the nice formal properties of $\DiffSt \subset \SmSt$.
We denote the stack presented by a Lie groupoid $G_\bullet$ 
by the same symbol, $G_\bullet:\Man^\opp \to \Gpd$.
Its value $G_\bullet(M)$ on a manifold $M$ is 
the groupoid of morphisms $M \to G_\bullet$ in $\LieGpdW$,
which can be computed as the groupoid of anafunctors $M \to G_\bullet$.

Every manifold $M$ defines a functor $\ev_M:\DiffSt \to \Gpd$, 
given by evaluation on $M$.
In particular, $\ev_\ast$ sends each stack 
$\st{X}$ to its \emph{groupoid of points} $\st{X}(\ast)$.
We view the remaining data of a differentiable stack as equipping $\st{X}(\ast)$
with a smooth structure.

\section{Smooth 2-groups}
\label{sec:2Grps}

\subsection*{From discrete to smooth 2-groups}
A \emph{(discrete) 2-group} is a monoidal category $(\cat{C}, \tensor, \ONE)$ 
with the property that every morphism in $\cat{C}$ has an inverse, and every 
object $x \in \cat{C}$ has a \emph{weak inverse}: 
an object $x^{-1} \in \cat{C}$ satisfying $x^{-1} \tensor x \iso \ONE \iso 
x \tensor x^{-1}$.
Up to equivalence, a 2-group $\cat{C}$ is characterised by 4 invariants
(see e.g.~\cite{baez2004higher2Gps}):
\begin{enumerate}
    \vspace{-0.5em}
    \itemsep-0.2em
    \item $\pi_0 \cat{C}$, the group of isomorphism classes of objects
    \item $\pi_1 \cat{C}$, the abelian group of endomorphisms of $\ONE$
    \item $\rho:\pi_0 \cat{C} \to \Aut(\pi_1 \cat{C})$, the conjugation action 
    $x \mapsto \id_x \tensor - \tensor \id_{x^{-1}}$
    \item $[\omega] \in \H^3(\B \pi_0 \cat{C}, \pi_1 \cat{C})$, the cohomology class of the associator.
    \vspace{-0.5em}
\end{enumerate}
We can build the 2-group associated to this data explicitly:
The underlying groupoid is 
\begin{equation*}
    \pi_0 \cat{C} \ltimes_\rho \pi_1 \cat{C} \rightrightarrows \pi_0 \cat{C},
\end{equation*}
with both source and target morphisms given by projection to $\pi_0 \cat{C}$.

The zero section $\pi_0 \cat{C} \to \pi_0 \cat{C} \ltimes_\rho \pi_1 \cat{C}$ serves 
as the
assignment of the identity morphism to each object. This canonically identifies the
endomorphism group of each object $g \in \pi_0 \cat{C}$ as $\End(g) = \pi_1 \cat{C}$.
There are no morphisms between objects corresponding to different elements 
$g \neq g' \in \pi_0 \cat{C}$. (Note that in the model of \emph{smooth} 2-groups we
use, it will not be possible to realise the String 2-groups in such a way.)
The composition of endomorphisms is group multiplication in $\pi_1 \cat{C}$.
The tensor product on objects and morphisms is given by the group 
multiplication in $\pi_0 \cat{C}$ and $\pi_0 \cat{C} \ltimes_\rho \pi_1 \cat{C}$,
respectively. 
The unitor isomorphisms are trivial.
An associator for the tensor structure must pick out an endomorphism
$\omega(g,g',g'') \in \End(g g' g'') = \pi_1 \cat{C}$ 
for each triple $g,g',g'' \in \pi_0 \cat{C}$.
Any cocycle
$\omega:{\pi_0 \cat{C}}^{\times 3} \to \pi_1 \cat{C}$ representing the cohomology class 
$[\omega]$ will do. 

The notion of a \emph{smooth} 2-group is the categorification of that 
of a Lie group (a smooth monoid with a smooth map sending
each element to its inverse).
One may avoid writing down conditions on the map $g \mapsto g^{-1}$ by
encoding the existence of the inverse indirectly:
\begin{definition}
    A \emph{Lie group} is a unital monoid $(G,\mu)$ in the 1-category 
    of smooth manifolds, such that the map 
    $(p_1,\mu):G \times G \to G \times G$ is a diffeomorphism.
\end{definition}
The smooth inverse appearing in the traditional definition of Lie groups 
can be recovered as
\begin{equation*}
\begin{tikzcd}
    G \ar[r,"i_1"] & G \times G \ar[r,"{(p_1,\mu)}^{-1}"] 
    & G \times G  \rar["p_2"] & G.
\end{tikzcd}
\end{equation*}
This definition now categorifies verbatim.
\begin{defn}
\label{defn:groupObject}
    A \emph{group object} in a bicategory with finite products
    is a unital monoid $(G_\bullet,\tensor)$ such that
    \begin{equation*}
        (p_1, \tensor): G_\bullet \times G_\bullet \to G_\bullet \times G_\bullet
    \end{equation*}
    is an 
    equivalence.\footnote{
         By a monoid object, we mean the maximally weak notion,
         often called a pseudomonoid. 
    }
\end{defn}

\begin{example}
    A \emph{discrete 2-group} is a group object in the bicategory of groupoids.
    That is a monoidal category in which each 
    object and morphism is invertible.
\end{example}

\begin{defn}~\cite{schommer2011central}
    A \emph{smooth 2-group} is a group object in $\DiffSt$.\footnote{
        In~\cite{schommer2011central}, 
        smooth 2-groups are defined internal to a bicategory denoted
        $\Bibun$. $\LieGpd$ includes fully faithfully into $\Bibun$, 
        and this induces an equivalence 
        $\DiffSt \simeq \LieGpdW \simeq \Bibun$~\cite{pronk1996etendues}.
    }
\end{defn}

\begin{example}
    Every Lie group $G$ defines a smooth 2-group.
    It has underlying Lie groupoid
    $G \rightrightarrows G$, with tensor product 
    given by multiplication.
\end{example}

\begin{example}
    Let $A$ be a Lie group.
    We denote by $[\ast/A]$ the 
    Lie groupoid
    $A \rightrightarrows \ast$.
    The multiplication on $A$ equips 
    it with a smooth 2-group structure iff
    $A$ is abelian.
\end{example}

\begin{example}
    The 2-category of discrete 2-groups is the full
    subcategory of the 2-category of smooth 2-groups
    on those whose underlying Lie groupoid
    is discrete.
\end{example}

\begin{example}
\label{ex:strict2Gps}
    A \emph{strict} smooth 2-group is a category object 
    in the category of Lie groups.
    Up to equivalence, one may always bring the underlying Lie groupoid 
    into the form 
    $G \ltimes_\rho H \rightrightarrows G$.
    Here, $G,H$ are Lie groups equipped with 
    a $G$-action $\rho:G \to \Aut(H)$, and a target
    homomorphism $t_0:H \to G$. 
    (This data $(G,H,t,\rho)$ satisfies further conditions 
    and is known as a \emph{crossed module}.)
    The source map can be chosen to be projection to $G$,
    and the target map is given by
    $t:(g,h) \mapsto t_0(h)g$.
    Composition is multiplication in $H$
    (i.e.\ $(t_0(h)g,h') \comp (g,h)=(t_0(h'h)g,h'h)$),
    tensor product is multiplication in the 
    object and morphism groups,
    and all the other data is trivial.
    This example is spelt out in great detail 
    in~\cite{porst2008strict}.
\end{example}

\subsection*{String 2-groups and Segal-Mitchison cohomology}
String 2-groups are smooth 2-groups associated to a Lie group $G$ 
and a cohomology class.
The underlying discrete 2-group of a String 2-group 
is a 2-group with $\pi_1 = U(1)$ and trivial action $\rho:\pi_0 \to \Aut(\pi_1)$.
These discrete String 2-groups are classified by their underlying 
``object group'' $\pi_0 = G^\delta$ and 
the class of the associator $k \in \H^3(\B G^\delta,U(1))$ --- 
the latter is a class in discrete group cohomology.
To understand String 2-groups as \emph{smooth} 2-groups,
we must talk about Lie group cohomology.

\emph{Globally smooth group cohomology}~\cite{stasheff1978continuous,blanc1985homologie}
$\H^\bullet_{\mathrm{sm}}(\B G,A)$ of a Lie group $G$ with coefficients in a
smooth $G$-module $A$ is computed just as in the discrete case,
except cocycles $G^{\times n} \to A$ are required to be smooth maps.
This cohomology theory is \emph{not} very well-behaved. A short exact sequence
of coefficients does not give rise to a long exact sequence in 
globally smooth group cohomology. Further, the group $\H^2_{\mathrm{sm}}(\B G,A)$, 
which ordinarily classifies extensions
of $G$ by $A$, only detects those extensions $A \to E \to G$ where
$E$ is topologically a trivial $A$-bundle over $G$.
One may fix this problem by only requiring smoothness in a neighbourhood
of the identity of $G$. The resulting cohomology theory is called
\emph{locally smooth cohomology}.
More geometric is the cohomology theory introduced by Segal 
in~\cite{segal1970cohomology} and recalled below.
It is equivalent to locally smooth cohomology by~\cite{wagemann2015cocycle}.


Lie group cohomology of $G$ is the cohomology of its \emph{classifying space} $\B G$. 
We use the model of $\B G$ as a 
simplicial manifold: the manifold of $q$-simplices is $\B G_q = G^{q}$ and the
face maps $d_i:G^{q} \to G^{q-1}$ are given by
\begin{equation*}
    d_i: (g_1, \ldots, g_q) \mapsto 
    \begin{cases}
        (g_2, \ldots, g_q) & i=0 \\
        (\ldots, g_i g_{i+1} ,\ldots) & 0 < i < q \\
        (g_1, \ldots, g_{q-1}) & i=q.
    \end{cases}
\end{equation*}

This simplicial manifold contains more homotopical information than
the topological space $|\B G|$ obtained from $\B G$ 
by geometric realisation. The difference is detected by Segal-Mitchison cohomology
unless the coefficient group is discrete (see Example~\ref{ex:special_case_Segal_cohomology}).

A \emph{simplicial cover} $Y_\bullet \onto \B G$ 
is a collection of covers $Y_{q} \onto \B G_q$, together with simplicial maps
(which we will also denote by $d_i$)
such that the corresponding squares in the diagram below 
commute. 
\begin{equation*}
\begin{tikzcd}
Y_{0} \arrow[d, two heads] & Y_{1} \arrow[d, two heads] \arrow[l, shift right=0.5] \arrow[l, shift left=0.5] & Y_{2} \arrow[d, two heads] \arrow[l, shift left=1] \arrow[l, shift right=1] \arrow[l] & Y_{3} \arrow[l, shift left=1.5] \arrow[l, shift left=0.5] \arrow[l, shift right=0.5] \arrow[l, shift right=1.5] \arrow[d, two heads] & {} \arrow[l, shift right=2] \arrow[l, shift right=1] \arrow[l] \arrow[l, shift left=1] \arrow[l, shift left=2] & \cdots \\
\ast                      & G \arrow[l, shift right=0.5] \arrow[l, shift left=0.5]                            & G \times G \arrow[l] \arrow[l, shift right=1] \arrow[l, shift left=1]                   & G \times G \times G \arrow[l, shift left=1.5] \arrow[l, shift left=0.5] \arrow[l, shift right=0.5] \arrow[l, shift right=1.5]          & {} \arrow[l] \arrow[l, shift left=2] \arrow[l, shift left=1] \arrow[l, shift right=2] \arrow[l, shift right=1] & \cdots
\end{tikzcd}
\end{equation*}
Associated to a simplicial cover is a \v{C}ech-simplicial 
double complex~\cite{brylinski2000differentiable} (recall that $Y^{[p]}$ denotes the $p$-fold fibre product of $Y$ over $M$):
\begin{align*}
    C^{p,q}(\B G,A) = \Cinf(Y_{q}^{[p+1]},A) && 
    d = d_\mathrm{\check{C}ech} + d_\mathrm{simp},
\end{align*}
where $d_\mathrm{\check{C}ech}$ denotes the \v Cech differential
and $d_\mathrm{simp}$ is the alternating sum of the pullback maps
\begin{equation*}
    d_i^\ast: \Cinf(Y_{q}^{[p]},A) \to \Cinf(Y_{q+1}^{[p]},A).
\end{equation*}
A simplical cover is \emph{good} if $Y_i \onto BG_i$ is good for all $i$.
For good simplicial covers of $\B G$, we may always choose $Y_{0}$ to be a point.
\begin{definition}
\label{def:SegalMitchisonCohomology}
    \emph{Segal-Mitchison cohomology} of $G$ with coefficients in 
    an abelian Lie group $A$ is the cohomology of the \v Cech-simplicial complex
    associated to a good simplicial cover $Y_\bullet \onto \B G$.
\end{definition}
This is independent of the good cover chosen~\cite{brylinski2000differentiable}.
Segal-Mitchison cohomology can be defined more generally with coefficients in 
any smooth $G$-module, but we will only need the case with trivial action.
\begin{example}
\label{ex:special_case_Segal_cohomology}
    For certain coefficients $A$, Segal-Mitchison cohomology reduces to other 
    cohomology theories~\cite{segal1970cohomology,brylinski2000differentiable}:
    \begin{itemize}
        \vspace{-0.5em}
        \itemsep-0.2em
        \item If $A$ is a vector space, it agrees with globally smooth group cohomology.
        \item If $A$ is discrete, it is the (singular) cohomology of the 
        topological space $|\B G|$.
        \vspace{-0.5em}
    \end{itemize}
\end{example}
Segal-Mitchison cohomology associates to any short exact sequence of coefficients a
long exact sequence of cohomology groups. The exponential exact sequence 
$\IZ \into \IR \onto U(1)$ gives rise to a long exact sequence
\begin{equation*}
    \cdots \to \H^p(\B G,\IR) \to \H^p(\B G,U(1)) \xrightarrow{\delta} 
    \H^{p+1}(\B G,\IZ) \to \H^{p+1}(\B G,\IR) \to \cdots.
\end{equation*}
Note that $\H^\bullet(\B G, \IR)$ is cohomology with respect to the sheaf of smooth
$\IR$-valued functions, not the sheaf of locally constant $\IR$-valued functions. 
In particular, it is \emph{not} the same as singular cohomology 
of $|\B G|$ with coefficients in discrete $\IR$ and thus does not see the usual real
characteristic classes.
When $G$ is compact, $\H^\bullet(\B G,\IR)$ vanishes in positive 
degrees~\cite{blanc1985homologie} (see also~\cite[Cor 97]{schommer2011central}), so
the map $\delta$ is an isomorphism for $p>0$. In particular, 
$\H^3(\B G,U(1)) \simeq \H^4(\B G,\IZ)$.
If $G$ is also simply-connected and simple, then 
$G$ is in fact 2-connected and $\pi_3 G = \IZ$~\cite[Thm VI.4.17]{mimura1991topology}\cite{MOHomotopygroupsofLiegroups}.
The group $\H^4(\B G,\IZ)$ can be computed as the simplicial cohomology of 
the geometric realisation $|\B G|$ (see Example~\ref{ex:special_case_Segal_cohomology}). 
This topological space has homotopy groups
$\pi_{i<4} |\B G|=0, \pi_4 |\B G|=\IZ$.  
The Hurewicz isomorphism and Universal Coefficient Theorem imply
\begin{equation*}
    \H^3(\B G,U(1)) \simeq \H^4(\B G,\IZ) \iso \IZ.
\end{equation*}


From a Segal-Mitchison 3-cocycle, one may build a smooth 2-group~\cite{schommer2011central}. 
We recall this construction now.
Let $G$ be a Lie group, $A$ an abelian Lie group, and 
$k \in \H^3(\B G,A)$ a class in Segal-Mitchison cohomology.
A Segal-Mitchison cocycle representing a cohomology class in $\H^3(\B G, A)$ is a
simplicial cover $Y_\bullet \onto \B G$ and a triple\footnote{
The \v Cech-simplicial complex has four groups in cohomological degree 3,
but the cocycle condition implies that the component in 
$\Cinf(Y_{0}^{[4]}, A)$ is trivial.}
\begin{equation*}
    (\lambda, \mu, \omega) \in \Cinf(Y_{1}^{[3]},A) \times
    \Cinf(Y_{2}^{[2]}, A) \times
    \Cinf(Y_{3}, A).
\end{equation*}
Recall the Lie groupoid 
$E^\lambda: Y_{1}^{[2]} \times A \rightrightarrows Y_{1}$
associated to the \v Cech 2-cocycle $\lambda$ from Example~\ref{ex:cocycleGpd}.
Using the map $(d_0, d_2):Y_2 \to Y_1 \times Y_1$, we can form the pullback
groupoid $F^\lambda = {(d_0, d_2)}^\ast(E^\lambda \times E^\lambda)
= Y_2^{[2]} \times A \times A \rightrightarrows Y_2$.
The tensor structure on $E^\lambda$ is given by the anafunctor
\begin{align*}
    E^\lambda \times E^\lambda \eqot F^\lambda \to E^\lambda,
\end{align*}
where the map $F^\lambda \to E^\lambda$ is given by $d_1:Y_2 \to Y_1$ on 
objects, and by
\begin{align*}
    Y_2^{[2]} \times A \times A &\to Y_1^{[2]} \times A \\
                   (v_0,v_1,a,b) &\mapsto (d_1(v_0),d_1(v_1),a+b+\mu(v_0,v_1)),
\end{align*}
on morphisms.
One may further pull back along $(d_0 d_0, d_2 d_0, d_1 d_0): Y_3 \to Y_1^{\times 3}$ 
to obtain another Lie groupoid $H^\lambda$.
Then $(- \tensor -) \tensor -$ and $- \tensor (- \tensor -)$ are represented 
by spans $(E^\lambda)^{\times 3} \eqot H^\lambda \to E^\lambda$,
and $\omega \in \Cinf(Y_3,A)$ defines a smooth natural transformation 
between them.
The cocycle condition ensures that the above indeed defines a smooth 2-group.

There is a notion of \emph{central extension} of smooth 
2-groups~\cite[Defn 83]{schommer2011central} paralleling the 
definition for groups.
Equivalence classes of central extensions of $G$ by $[\ast/A]$ are 
in bijective correspondence with elements of $\H^3(\B G,A)$, and 
a representative of this equivalence class may be built from a cocycle as
above. 
The smooth String 2-group $G_k$ is the central extension of 
$G$ by $[\ast/U(1)]$ corresponding to
$k \in \H^3(\B G, U(1))$~\cite[Thm 100]{schommer2011central}.

\section{The smooth centre}
\label{sec:smDZ}
The Drinfel'd centre of a monoidal category $(\cat{C},\tensor,\ONE, \omega)$ 
is the category whose objects are pairs $(X, \gamma)$, where $X \in \cat{C}$ and 
$\gamma: X \tensor - \eqto - \tensor X$
is a \emph{half-braiding}:
a natural isomorphism satisfying the \emph{hexagon equation}
\begin{equation*}
    \omega(Y,Z,X) \comp \gamma(Y \tensor Z) \comp \omega(X,Y,Z) = 
    \left(\gamma(Y) \tensor \id_Z\right) \comp \omega(Y,X,Z) \comp
    \left(\id_Y \tensor \gamma(Z)\right).
\end{equation*}
It is the analogue of the centre of a monoid in the world of 
monoidal categories. 
We study the corresponding notion in the context of smooth 2-groups.

\subsection*{Drinfel'd centres of smooth 2-groups}
In~\cite{street2004centre}, the notion of the Drinfel'd centre
was internalised to any braided monoidal bicategory (and thus in particular 
to the symmetric monoidal bicategory of stacks). 
Let $\twocat{B}$ be a symmetric monoidal bicategory with product 
$\boxtimes$ and braiding $b$, and
$(\cat{C},\tensor)$ a monoid object in $\twocat{B}$. 
\begin{defn}
\label{def:centrePiece}
    Let $U \in \twocat{B}$.
    A $U$-family of \emph{centre pieces} for $(\cat{C}, \tensor)$ is a pair
    $(u , \gamma)$ where $u: U \to \cat{C}$ is a morphism and 
    \begin{equation*}
    \gamma: \tensor \comp (u \boxtimes \id_\cat{C}) \eqto 
    \tensor \comp (\id_\cat{C} \boxtimes\ u) \comp b_{U,\cat{C}}
    \end{equation*}
    an invertible 2-cell satisfying the hexagon equation
    (phrased internal to $\twocat{B}$).
    We think of $u$ as a $U$-point of $\cat{C}$ and call 
    $\gamma$ a \emph{half-braiding} for $u$.
    A \emph{morphism of centre pieces} 
    $(u, \gamma) \to (u^\prime, \gamma^\prime)$ is 
    a morphism $u \to u^\prime$ which commutes 
    with the half-braidings.
    We denote by $\CP(U,\cat{C})$ the category of $U$-families of 
    centre pieces for $(\cat{C},\tensor)$.
\end{defn}

Half-braidings may be pulled back along maps $U^\prime \to U$,
and $\CP(-,\cat{C})$ admits the structure of a 2-presheaf over
$\Man$.

\begin{defn}
    The centre $\DZ \cat{C}$ of a monoid 
    $\cat{C}$ in a braided monoidal bicategory $\twocat{B}$
    is the representing object for the 2-presheaf
    $\CP(-,\cat{C}):\twocat{B}^\opp \to \Cat$.
\end{defn}

In~\cite{pirashvili2021centre}, this centre was computed explicitly 
in the bicategory of crossed modules (see Example~\ref{ex:strict2Gps}). 
We now specialise to $\twocat{B} = \SmSt$, the bicategory of stacks.
The 2-Yoneda Lemma says that for any stack $\st{F}$ and
manifold $M$, the category $\st{F}(M)$ is naturally equivalent to the category of
1-morphisms $M \to \st{F}$~\cite{lerman2010orbifolds}.
Thus, a morphism $u:M \to A$
is equivalently an object of the category $A(M)$,
which makes Definition~\ref{def:centrePiece} 
a parameterised version of the ordinary 
Drinfel'd centre in this case.

\begin{thm}\cite{street2004centre}
    The centre $\DZ \cat{C}$ of a monoid $\cat{C} \in \twocat{B}$ exists
    if $\twocat{B}$ is finitely complete and closed. 
    Denote the internal $\Hom$ of $\twocat{B}$ by $[\cdot,\cdot]$,
    then the centre is the limit
    \begin{equation*}
        \DZ\cat{C}
        =
        \lim \big(
        \cat{C}
        \mathrel{\substack{\textstyle\rightarrow\\[-0.6ex]
                      \textstyle\rightarrow}} 
        [\cat{C}, \cat{C}]
        \mathrel{\substack{\textstyle\rightarrow\\[-0.6ex]
                      \textstyle\rightarrow \\[-0.6ex]
                      \textstyle\rightarrow}} 
        [\cat{C} \boxtimes \cat{C}, \cat{C}]
        \big).\footnote{
            The morphisms in the diagram are induced by the monoid
            structure of $\cat{C}$. We suppress the 2-cells.
        }
    \end{equation*}
    The centre is equipped with a monoidal product $\tensor$, 
    a monoidal morphism $\DZ\cat{C} \to \cat{C}$, and a braiding
    $\beta: \tensor \eqto \tensor \comp b_{\DZ \cat{C},\DZ \cat{C}}$.
\end{thm}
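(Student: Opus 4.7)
The plan is to show that the displayed iterated 2-limit represents the 2-presheaf $\CP(-,\cat C)$ of centre pieces, which establishes both the existence claim and the explicit limit description simultaneously. The additional structures --- monoidal product, forgetful morphism, and braiding --- then transfer from the evident such structures on centre pieces via the universal property.

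For representability, I would use the closed structure of $\twocat B$ to rewrite centre-piece data. Under the adjunction $(-\boxtimes \cat C) \dashv [\cat C, -]$, the two morphisms appearing in a half-braiding correspond to the composites of $u:U \to \cat C$ with the left- and right-tensoring morphisms $L, R: \cat C \to [\cat C, \cat C]$ respectively (the braiding $b_{U,\cat C}$ is absorbed into the adjunction correspondence on the right-hand side). A half-braiding for $u$ is then precisely an invertible 2-cell $L \circ u \Rightarrow R \circ u$, so the iso-inserter of the parallel pair $L, R$ parameterises pairs $(u, \gamma)$ without yet enforcing the hexagon. The second stage uses three parallel morphisms $[\cat C, \cat C] \to [\cat C \boxtimes \cat C, \cat C]$, namely the three natural ways to combine an endomorphism with $\tensor$: post-compose with $\tensor$; pre-compose on the first factor; pre-compose on the second factor. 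The coherence this stage demands between the 2-cells that a centre piece induces via these three morphisms is exactly the hexagon equation. A 2-Yoneda argument then closes the representability.

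The monoidal and braided structures on $\DZ \cat C$ come for free: given $(u_i, \gamma_i)$ over $U$, we set $(u_1, \gamma_1) \tensor (u_2, \gamma_2) \define (u_1 \tensor u_2, \gamma)$ with $\gamma$ assembled from $\gamma_1, \gamma_2$ by whiskering; the forgetful morphism $\DZ \cat C \to \cat C$ is the projection of the limit cone onto its first edge; and the braiding $\beta_{(u_1,\gamma_1),(u_2,\gamma_2)}$ is $\gamma_1$ evaluated at $u_2$. Each of these is well-defined on centre pieces and functorial in $U$, so descends to $\DZ \cat C$ by universality.

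The main obstacle is specifying the 2-cells decorating the arrows in the limit diagram so that the 2-limit captures the hexagon exactly. A 1-categorical equalizer would discard the essential 2-cell data; one must work with the appropriate pseudo/bi-limit, and the finite completeness hypothesis should be read as guaranteeing these. The bulk of the argument is then the diagram chase verifying that the associated 2-presheaf is equivalent to $\CP(-,\cat C)$, with all the cocycle-like conditions on 2-cells matched up on both sides.
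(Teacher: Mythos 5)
The paper does not actually prove this statement---it is quoted from \cite{street2004centre}---but your sketch is essentially Street's own argument: transpose the two legs of a half-braiding through the adjunction $(-\boxtimes\cat{C})\dashv[\cat{C},-]$ to get the parallel pair $L,R:\cat{C}\to[\cat{C},\cat{C}]$, form the iso-inserter, and then equify against the three maps into $[\cat{C}\boxtimes\cat{C},\cat{C}]$ to impose the hexagon; these are PIE limits, hence covered by finite completeness, and the monoidal/braided structure descends by the universal property exactly as you say. Your outline is correct; the only imprecisions are in naming the three maps $[\cat{C},\cat{C}]\to[\cat{C}\boxtimes\cat{C},\cat{C}]$ (they are $F\mapsto F(-\tensor-)$, $F\mapsto F(-)\tensor-$ and $F\mapsto-\tensor F(-)$, none of which is literally post-composition with $\tensor$) and in omitting the degeneracy/unit normalisation of the descent object, which here is automatic from the hexagon and invertibility of $\gamma$.
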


The bicategory $\SmSt$ is finitely complete~\cite{street1982characterization}, 
so in particular fulfills the assumptions of the above theorem. 
Any smooth 2-group $G \in \DiffSt \subset \SmSt$ thus has a centre 
$\DZ G \in \SmSt$.
The functor $\ev_U: \SmSt \to \Gpd$ which evaluates
a stack on a manifold $U$ induces a braided monoidal comparison functor
$(\DZ G)(U) \to \DZ(G(U))$. This functor is the inclusion of the smooth half-braidings
and smooth maps of the associated centre pieces.

The monoidal groupoid $G(U)$ is a 2-group, and hence every centre piece $(X, \gamma)$ 
has an inverse $(iX, i\gamma^{-1}_i)$~\cite[Ch 7.13]{etingof2016tensor}, where
$i:G(U) \to G(U)$ is a functor assigning inverses.

We would like to show that this inverse exists
as a smooth half-braiding in $(\DZ G)(U)$. 
One can pick a global inverse functor $i_{\mathrm{glob}}: G \to G$ and directly show this, but 
the diagrams involved are unwieldy due to the presence of coherences.
By uniqueness of inverses (up to isomorphism), it suffices to 
show the existence of inverses in $(\DZ G)(U)$. 
\begin{lemma}
\label{lem:DZof2gpis2gp}
    The Drinfel'd centre of a group object in $\SmSt$ is a group 
    object in $\SmSt$.
\end{lemma}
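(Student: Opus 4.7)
The plan is to invoke the 2-Yoneda Lemma: to show that $(p_1, \tensor): \DZ G \times \DZ G \to \DZ G \times \DZ G$ is an equivalence in $\SmSt$, it suffices to check that for every manifold $U$ the induced functor $(\DZ G)(U) \times (\DZ G)(U) \to (\DZ G)(U) \times (\DZ G)(U)$ is an equivalence of groupoids, which is in turn equivalent to asking that every object of $(\DZ G)(U)$ admits a weak tensor-inverse.

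Since $G$ is a group object in $\SmSt$, the groupoid $G(U)$ is a discrete 2-group for each $U$, and a classical computation shows that the ordinary Drinfel'd centre $\DZ(G(U))$ of a discrete 2-group is itself a 2-group. The braided monoidal comparison functor $(\DZ G)(U) \to \DZ(G(U))$ mentioned in the text is the inclusion of the full subcategory on smooth centre pieces, so every $(u, \gamma) \in (\DZ G)(U)$ has an abstract inverse in $\DZ(G(U))$. The content to verify is thus that such an inverse can be chosen to lie in the smooth subcategory.

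For the underlying object, because $G$ is itself a group object in $\SmSt$ the point $u: U \to G$ admits a smooth weak inverse $u^{-1}: U \to G$ together with smooth isomorphisms $\epsilon: u \tensor u^{-1} \isoto \ONE$ and $\eta: u^{-1} \tensor u \isoto \ONE$. For the inverse half-braiding, I would define, for each $V$-point $x: V \to G$, a smooth isomorphism $\tilde{\gamma}(x): u^{-1} \tensor x \isoto x \tensor u^{-1}$ as the composite
\[
u^{-1} \tensor x \ \isoto\ u^{-1} \tensor x \tensor u \tensor u^{-1} \ \xrightarrow{u^{-1} \tensor \gamma(x)^{-1} \tensor u^{-1}}\ u^{-1} \tensor u \tensor x \tensor u^{-1} \ \isoto\ x \tensor u^{-1},
\]
where the outer isomorphisms are built from $\epsilon$, $\eta$ and associators. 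Every ingredient is smooth, so $\tilde{\gamma}$ is a smooth half-braiding for $u^{-1}$.

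The main obstacle is then to verify the hexagon equation for $\tilde{\gamma}$ and to check that $\epsilon$ lifts to a centre-piece isomorphism $(u, \gamma) \tensor (u^{-1}, \tilde{\gamma}) \isoto (\ONE, \mathrm{triv})$. Rather than execute the resulting diagram chase directly, I would sidestep it by invoking uniqueness of inverses in the 2-group $\DZ(G(U))$: the formula is designed so that $(u^{-1}, \tilde{\gamma})$ becomes isomorphic in $\DZ(G(U))$ to any abstract inverse of $(u, \gamma)$, and the hexagon is preserved under isomorphism of centre pieces. Consequently $(\DZ G)(U)$ is a 2-group for every $U$, which via 2-Yoneda gives the lemma.
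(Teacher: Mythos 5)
Your argument is essentially correct, but it takes a genuinely different route from the paper. The paper's proof is purely formal and never touches a half-braiding: the centre is a limit in $\SmSt$ by Street's theorem, the inclusion of group objects into monoid objects in $\SmSt$ is a reflective localisation (citing \cite{schommer2011central}), and reflective localisations reflect limits, so the limit already lives among the group objects. Your proof is exactly the concrete alternative that the text acknowledges immediately before the lemma (``One can pick a global inverse functor $i_{\mathrm{glob}}:G\to G$ and directly show this, but the diagrams involved are unwieldy due to the presence of coherences''). What your route buys is an explicit formula for the inverse centre piece --- which the paper only recovers afterwards, in Corollary~\ref{cor:centreOf2GpIsNice2Gp}, by combining the formal lemma with uniqueness of inverses; what the paper's route buys is that no coherence diagrams need to be checked at all. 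Your reduction via 2-Yoneda to the statement that each groupoid $(\DZ G)(U)$ is a 2-group is sound, since equivalences of stacks and equalities of 2-cells in $\SmSt$ are detected objectwise.

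The one soft spot is the final ``sidestep''. As stated it is circular: to transport the hexagon from an abstract inverse $(v,\delta)$ in $\DZ(G(U))$ onto $(u^{-1},\tilde\gamma)$ you need an isomorphism \emph{of centre pieces}, i.e.\ an $f:u^{-1}\to v$ with $\delta(x)\comp(f\tensor\id_x)=(\id_x\tensor f)\comp\tilde\gamma(x)$; exhibiting such an $f$ is essentially the diagram chase you are trying to avoid, and uniqueness of inverses only applies once you already know $(u^{-1},\tilde\gamma)$ is an object of the centre, i.e.\ satisfies the hexagon. The clean fix is not to rederive anything: the statement that your formula for $\tilde\gamma$ is a half-braiding and that $\epsilon,\eta$ lift to centre-piece isomorphisms is the standard discrete fact the paper itself cites from~\cite[Ch 7.13]{etingof2016tensor}, applied to the monoidal groupoid of $V$-points. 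Since the hexagon is an equation of 2-cells in $\SmSt$ and such equations can be checked on points, the discrete verification suffices, and the only genuinely new content --- which you do supply --- is that every ingredient of the formula ($u^{-1}$, $\epsilon$, $\eta$, $\gamma$, the associators) is a morphism or 2-cell in $\SmSt$, so that $(u^{-1},\tilde\gamma)$ lands in the smooth subcategory $(\DZ G)(U)\subset\DZ(G(U))$. With that citation in place of the sidestep, the proof is complete.
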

\begin{proof}
    This is completely formal.
    The inclusion of group objects in $\SmSt$ into monoid objects in $\SmSt$
    is a reflective localisation of 
    $(2,1)$-categories~\cite[Cor 59-Thm 61]{schommer2011central}, and thus
    of $(\infty,1)$-categories. 
    Reflective localisations of $(\infty,1)$-categories reflect limits~
    \cite{MOreflectiveLocalization}
    so the limit exists in the bicategory of 2-groups, and agrees with the
    limit calculated in the bicategory of monoid objects.
\end{proof}
\begin{cor}
\label{cor:centreOf2GpIsNice2Gp}
    The centre $\DZ G$ of any 2-group in $\DiffSt$ is a 2-group object in $\SmSt$.
    Given a global inverse $i: G \to G$, the inverse of a centre piece
    $(X, \gamma) \in (\DZ G)(U)$ is given by $(i_U X, i_U\gamma^{-1}_{i_U})$.
\end{cor}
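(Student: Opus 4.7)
The first assertion follows immediately from Lemma~\ref{lem:DZof2gpis2gp}: the bicategory $\DiffSt$ is a full sub-bicategory of $\SmSt$, so any 2-group $G \in \DiffSt$ is in particular a group object in $\SmSt$, and the lemma provides a group-object structure on $\DZ G$ inside $\SmSt$. So the work is in verifying the explicit formula for inverses.

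The plan is to directly check that $(i_U X, i_U \gamma^{-1}_{i_U})$ is a centre piece over $U$ whose tensor product with $(X, \gamma)$ is the unit $(\ONE, \id)$. First I would observe that a global inverse $i: G \to G$ is automatically \emph{anti}-monoidal, carrying canonical coherence isomorphisms $i(xy) \iso i(y) \tensor i(x)$ derived from the 2-group structure on $G$. Given a centre piece $(X, \gamma) \in (\DZ G)(U)$, the candidate half-braiding $\tilde\gamma_Y : i_U X \tensor Y \to Y \tensor i_U X$ is the composite of the anti-monoidal coherence $i_U X \tensor Y \isoto i_U(i_U Y \tensor X)$, the image $i_U(\gamma^{-1}_{i_U Y}): i_U(i_U Y \tensor X) \to i_U(X \tensor i_U Y)$, and the reverse coherence $i_U(X \tensor i_U Y) \isoto Y \tensor i_U X$. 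Naturality in $Y$ is immediate from naturality of $\gamma$ plus functoriality of $i_U$.

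The hexagon equation for $\tilde\gamma$ is the main technical step: it follows by applying $i_U$ to the hexagon equation for $\gamma^{-1}$ evaluated at the pair $(i_U Y, i_U Z)$, and rearranging using the anti-monoidal coherence of $i_U$ together with the associator on $G$. Crucially, every ingredient in the construction---$i_U$, $\gamma$, morphism inversion, associators, and the anti-monoidal coherence---is smooth, so $\tilde\gamma$ defines an object of $(\DZ G)(U)$ rather than merely of the underlying discrete Drinfel'd centre of $G(U)$. To finish, I would verify that the tensor product $(X, \gamma) \tensor (i_U X, \tilde\gamma)$ is isomorphic to $(\ONE, \id)$: the underlying object is $X \tensor i_U X$, which identifies with $\ONE$ via the counit of the 2-group structure on $G$, and a direct diagram chase shows the induced half-braiding transports to the trivial one. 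The analogous calculation on the other side is symmetric.

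The main obstacle is the hexagon verification, not because it is conceptually deep but because the coherence 2-cells (associators, anti-monoidality of $i_U$, interchangers) proliferate in the diagram. A cleaner shortcut, if one wants to minimise bookkeeping, is to invoke Lemma~\ref{lem:DZof2gpis2gp} to conclude \emph{a priori} that $(X, \gamma)$ has an inverse in $(\DZ G)(U)$; since the forgetful monoidal functor $(\DZ G)(U) \to G(U)$ preserves inverses, the underlying object of any such inverse is canonically isomorphic to $i_U X$. Uniqueness of inverses up to coherent isomorphism in the 2-group $(\DZ G)(U)$ then pins down the half-braiding, provided one has exhibited $(i_U X, i_U \gamma^{-1}_{i_U})$ as \emph{some} centre piece---which reduces the work to the hexagon check sketched above.
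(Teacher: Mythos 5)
Your proposal is correct, but it takes a more computational route than the paper, which in fact goes out of its way to avoid exactly the coherence chase you sketch. The paper's proof is two sentences: Lemma~\ref{lem:DZof2gpis2gp} guarantees that an inverse to $(X,\gamma)$ \emph{exists} in $(\DZ G)(U)$, and under the evaluation functor $\ev_U:\SmSt \to \Gpd$ this smooth inverse lands in the discrete centre $\DZ(G(U))$, where inverses are unique up to isomorphism and the formula $(iX, i\gamma^{-1}_i)$ is already known (the paper cites~\cite[Ch 7.13]{etingof2016tensor}); hence the smooth inverse must be isomorphic to that candidate. At no point does the paper verify the hexagon for the candidate, nor that the candidate's half-braiding is smooth --- both are deduced a posteriori from the abstract existence statement. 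Your main route instead constructs $\tilde\gamma$ explicitly via the anti-monoidal coherence of $i_U$ and checks the hexagon directly; this is sound (the construction lives entirely inside $\SmSt$ given the global inverse $i$, so smoothness is automatic), and it has the virtue of actually exhibiting the smooth half-braiding rather than inferring its existence, but it commits you to the proliferating coherence diagrams that the text preceding the corollary explicitly flags as ``unwieldy.'' Your closing shortcut is nearly the paper's argument, except that you still insist on exhibiting $(i_U X, i_U\gamma^{-1}_{i_U})$ as \emph{some} smooth centre piece before invoking uniqueness; the paper dispenses with even that step by using that the comparison functor $(\DZ G)(U) \to \DZ(G(U))$ is monoidal and that the formula holds in the discrete target. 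Both arguments establish the corollary; the paper's buys brevity at the price of only identifying the inverse up to isomorphism, while yours pins it down on the nose at the price of an unexecuted diagram chase.
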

\begin{proof}
    Lemma~\ref{lem:DZof2gpis2gp} guarantees the existence of an inverse to each centre piece 
    $(X,\gamma)$. Under the functor $\ev_U:\SmSt \to \Gpd$, this inverse must
    be isomorphic to $(iX, i\gamma^{-1}_i)$ for any choice of inverse map $i$.
\end{proof}

\begin{lemma}
\label{lem:pi1ZCispi1C}
    Let $\cat{C}$ be a 2-group. Then $\pi_1 \DZ \cat{C} = {(\pi_1 \cat{C})}^{\pi_0 \cat{C}}$,
    the invariants under the conjugation action $\rho$.
\end{lemma}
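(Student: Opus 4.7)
The plan is to compute $\pi_1 \DZ \cat{C}$ by unpacking the centre at the unit object. By Corollary~\ref{cor:centreOf2GpIsNice2Gp}, $\DZ \cat{C}$ is a 2-group, so $\pi_1 \DZ \cat{C}$ is well-defined as the (smooth) automorphism group of its monoidal unit, evaluated at the point. From Street's description of $\DZ \cat{C}$ as a limit, its unit corresponds to the centre piece $(\ONE, \gamma^{\mathrm{triv}})$, where $\gamma^{\mathrm{triv}}_Y: \ONE \tensor Y \eqto Y \tensor \ONE$ is the half-braiding assembled from the unitor isomorphisms of $\cat{C}$. The hexagon axiom for $\gamma^{\mathrm{triv}}$ is a tautology in $\pi_1 \cat{C}$, so there is no obstruction to this being a bona fide centre piece.

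Next I would unwind the definition of morphisms in $\DZ \cat{C}$. A morphism $(\ONE, \gamma^{\mathrm{triv}}) \to (\ONE, \gamma^{\mathrm{triv}})$ is an endomorphism $f: \ONE \to \ONE$ in $\cat{C}$ — an element of $\pi_1 \cat{C}$ — satisfying the compatibility
\begin{equation*}
    (\id_Y \tensor f) \comp \gamma^{\mathrm{triv}}_Y = \gamma^{\mathrm{triv}}_Y \comp (f \tensor \id_Y)
\end{equation*}
for every $Y \in \cat{C}$. Because $\gamma^{\mathrm{triv}}$ is built from unitors, cancelling them reduces this to the equality $\id_Y \tensor f = f \tensor \id_Y$ for every $Y$. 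By the definition of the conjugation action $\rho: \pi_0 \cat{C} \to \Aut(\pi_1 \cat{C})$, this is exactly the condition $\rho(y)(f) = f$ for every $y \in \pi_0 \cat{C}$, i.e., $f$ lies in the invariants ${(\pi_1 \cat{C})}^{\pi_0 \cat{C}}$.

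To upgrade this to a smooth statement, I would note that the invariance condition is given by closed equations in $\pi_1 \cat{C}$, and that evaluating the limit defining $\DZ \cat{C}$ on the point cuts out precisely the smooth subgroup of $\pi_1 \cat{C}$ on which $\rho$ acts trivially. The main bookkeeping obstacle is verifying that no additional coherence data from $\gamma^{\mathrm{triv}}$ or from the 2-cells in the limit diagram imposes further conditions on $f$ beyond the invariance equation; but since $f$ is a morphism between identical centre pieces and $\gamma^{\mathrm{triv}}$ involves only unitors, these reduce automatically. This yields the desired identification $\pi_1 \DZ \cat{C} = {(\pi_1 \cat{C})}^{\pi_0 \cat{C}}$ as smooth abelian groups.
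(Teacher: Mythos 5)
Your proof is correct and follows essentially the same route as the paper: identify the unit of $\DZ\cat{C}$ as $(\ONE,\gamma^{\mathrm{triv}})$ with half-braiding built from unitors, and unwind the endomorphism condition to $\id_Y \tensor f = f \tensor \id_Y$, which (after tensoring with $\id_{Y^{-1}}$, as the paper makes explicit) is precisely invariance under $\rho$.
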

\begin{proof}
    The trivial centre piece (the monoidal unit of $\DZ \cat{C}$) 
    is the unit morphism $\ONE: \point \to \cat{C}$, with half-braiding 
    $\gamma_\ONE$ built as a composite of unitor morphisms
    $\ONE \tensor - \to - \to - \tensor \ONE$.
    The condition for a morphism $f \in \End(\ONE) = \pi_1 \cat{C}$ to be an endomorphism 
    of $(\ONE,\gamma_\ONE)$ is
    \begin{equation*}
        \gamma_\ONE(x) \comp (f \tensor \id_x) = 
        (\id_x \tensor f) \comp \gamma_\ONE(x)
    \end{equation*}
    for all $x \in \cat{C}$. Tensoring with $\id_{x^{-1}}$ shows that this is equivalent
    to $f$ being conjugation-invariant.
\end{proof}

The central extensions we are interested in have trivial
conjugation action, and so $\pi_1 \DZ \cat{C} = \pi_1 \cat{C}$ 
in this case.

\subsection*{Braided 2-groups and quadratic forms}
Recall that a braiding $\beta$ on a monoid 
$(\cat{B},\tensor,\ldots)$ is a 2-morphism 
$\beta: - \tensor - \to - \tensor^\opp -$ 
which is compatible with units and satisfies the hexagon equation in 
both variables.
\begin{defn}
    A \emph{braided} smooth 2-group is a braided monoid
    $(\cat{B}, \tensor, \omega, l, r, \beta) \in \DiffSt$ 
    whose underlying monoid $(\cat{B}, \tensor, \omega, l, r)$ is a smooth 2-group.
\end{defn}

A \emph{discrete} braided 2-group is a braided monoidal 
groupoid whose underlying monoidal groupoid is a 2-group. 
They are also known as \emph{braided categorical groups}
\cite{joyal1993braided}.
The existence of a braiding on $\cat{B}$ forces
$\pi_0 \cat{B}$ to be abelian and
the action $\rho:\pi_0 \cat{B} \to \Aut(\pi_1 \cat{B})$ to be trivial.

\begin{thm}{\cite{eilenberg1954groups}}
\label{thm:EMquads}
    Equivalence classes of discrete braided categorical groups 
    $\cat{B}$ with
    $\pi_0 \cat{B} =A, \pi_1 \cat{B} =B$ 
    are in one-to-one correspondence with 
    quadratic forms $q:A \to B$. 
    Under this correspondence, $q(a)$ is the self-braiding
    $\beta_{a,a} \in \End(a) \iso B$
    of $a \in A$.
\end{thm}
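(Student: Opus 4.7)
The plan is to extract a quadratic form from a braided categorical group, verify that it is an equivalence invariant, and then invoke the classical Eilenberg--Mac Lane computation of abelian $3$-cohomology to upgrade the assignment $\cat{B} \mapsto q$ to a bijection.

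First I would reduce to a skeletal model. Since $\cat{B}$ is braided, the conjugation action of $A \define \pi_0 \cat{B}$ on $B \define \pi_1 \cat{B}$ is forced to be trivial, so the underlying groupoid can be taken to be $A \times B \rightrightarrows A$ with the endomorphism group at every object canonically identified with $B$. The associator is then encoded by a normalised $3$-cocycle $\omega: A^{\times 3} \to B$ and the braiding by a $2$-cochain $c: A^{\times 2} \to B$ recording the isomorphism $\beta_{a, a'}: a \tensor a' \to a' \tensor a$. The two hexagon axioms translate into explicit identities relating $\omega$ and $c$, which are precisely the Mac Lane abelian $3$-cocycle conditions.

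Next I would verify that $q(a) \define c(a, a) = \beta_{a,a} \in B$ is a quadratic form, i.e.\ that $q(-a) = q(a)$ and that the polarisation $b(a, a') \define q(a+a') - q(a) - q(a')$ is symmetric and $\IZ$-bilinear. Normalisation of $\omega$ and $c$ gives $q(0) = 0$, while the identity $q(-a) = q(a)$ follows from the compatibility of the braiding with weak inverses, a standard coherence consequence. Expanding $q(a+a')$ using the two hexagons and the $3$-cocycle equation yields $b(a, a') = c(a, a') + c(a', a)$, which is manifestly symmetric; bilinearity is then read off by expanding the hexagon in the first variable. I would also check independence of the equivalence class: any braided monoidal equivalence identifies the self-braidings $\beta_{a,a}$ with those of the image under the canonical identifications of endomorphism groups with $B$, so $q$ is a genuine invariant.

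Finally I would invoke Eilenberg--Mac Lane's classical result that the abelian cohomology group $H^3_{ab}(A, B)$ classifying pairs $(\omega, c)$ modulo equivalence is canonically isomorphic to $\Quad(A, B)$, the isomorphism being exactly $(\omega, c) \mapsto q$ with $q(a) = c(a, a)$. Injectivity amounts to checking that two braided structures giving the same $q$ differ by an abelian coboundary, which is a direct cocycle manipulation. The main obstacle is surjectivity: given an arbitrary quadratic form $q$ with polarisation $b$, one must produce explicit cocycles $(\omega, c)$ realising it. This is done via a set-theoretic splitting of a suitable short exact sequence encoding $q$ together with a careful verification of the hexagon and $3$-cocycle identities --- a classical but genuinely non-trivial construction, and the only place where the full strength of the theorem is used.
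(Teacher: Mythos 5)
The paper does not prove this statement; it is quoted directly from Eilenberg--Mac Lane, and your outline correctly reproduces the standard argument from that source: reduce to a skeletal model where the braided structure is an abelian $3$-cocycle $(\omega,c)$, check that the trace $q(a)=c(a,a)$ is a quadratic form with polarisation $c(a,a')+c(a',a)$, and invoke the isomorphism $\H^3_{\mathrm{ab}}(A,B)\iso\Quad(A,B)$. Your identification of surjectivity (realising an arbitrary quadratic form by an explicit abelian cocycle) as the genuinely non-trivial step is accurate, and the overall plan is sound.
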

A quadratic form is a map $q:A \to B$ such that $q(n \cdot a) = n^2 \cdot q(a)$ 
for all $n \in \IN, a \in A$, and the associated form 
\begin{align*}
    \sigma_q:A \times A &\to B \\
                  (a,b) &\mapsto q(a+b)/\left(q(a)q(b)\right)
\end{align*}
is bilinear.
We conjecture that Theorem~\ref{thm:EMquads} also holds true in the smooth case. We will only show 
one half of this statement, namely that
braided 2-groups are captured by their quadratic form.
Denote the bicategory of smooth braided 2-groups with $\pi_0 = A$, $\pi_1 = B$
for two abelian Lie groups $A,B$ by $\mathbf{B2G}(A,B)$.
It is easy to check that the quadratic form associated to any such
braided 2-group is smooth, and that the Baer sum of braided 2-groups
corresponds to pointwise product of the associated quadratic forms.

\begin{lem}
    The homomorphism 
    \begin{equation*}
        \mathbf{B2G}(A,B) \to \mathrm{Quad}^\mathrm{sm}(A,B)
    \end{equation*}
    that sends a braided 2-group to its quadratic form
    is injective.
    We assume the group of connected components of $A$ is 
    finitely generated.
\end{lem}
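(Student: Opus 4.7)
The plan is to reduce injectivity to the triviality of the kernel. As the paper has just noted, Baer sum of braided 2-groups corresponds to pointwise product of their quadratic forms, so on equivalence classes the map
$\mathbf{B2G}(A,B) \to \mathrm{Quad}^{\mathrm{sm}}(A,B)$
is a group homomorphism. Injectivity is therefore equivalent to showing that every smooth braided 2-group $\cat{B}$ with trivial quadratic form is smoothly braided equivalent to the trivially braided 2-group $A \times [\ast/B]$.

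By the discrete Eilenberg--Mac Lane result (Theorem~\ref{thm:EMquads}), such a $\cat{B}$ is already equivalent as a \emph{discrete} braided 2-group to the trivial one. The task is to refine this abstract equivalence to an equivalence of smooth stacks. I would work with a cocycle presentation: choose a good simplicial cover $Y_\bullet \onto \B A$ and represent $\cat{B}$ by Segal--Mitchison cochains $(\lambda,\mu,\omega)$ encoding the associator as in Section~\ref{sec:2Grps}, together with an additional cochain $c$ encoding the braiding. The triviality of $q(x)=\beta_{x,x}$, and hence of the polarisation $\sigma_q(x,y)=\beta_{x,y}\beta_{y,x}$, combined with the hexagon equations and the cochain-level discrete Eilenberg--Mac Lane calculation, forces the class of $(\omega,c)$ to be a coboundary in the appropriate Segal--Mitchison complex \emph{after forgetting smoothness}.

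The core difficulty is to exhibit this coboundary \emph{smoothly}. This is where the finite-generation hypothesis on $\pi_0(A)$ enters. Writing $A \iso A_0 \times \pi_0(A)$ with $A_0$ connected, I would decompose the Segal--Mitchison cohomology of $\B A$ via a Künneth-style argument and handle the two factors separately. On $A_0$, the vanishing $\H^p(\B A_0,\IR)=0$ for $p>0$ invoked elsewhere in the paper, together with the exponential short exact sequence $\IZ \to \IR \to U(1)$, reduces smooth cocycle problems with $B$-coefficients to problems about the singular cohomology of $|\B A_0|$; on the discrete finitely-generated factor $\pi_0(A)$, smoothness is vacuous and the classical Eilenberg--Mac Lane theorem supplies a coboundary directly. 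Assembling these pieces into a global smooth coboundary of $(\omega,c)$, and reinterpreting it as a smooth braided monoidal equivalence $\cat{B} \simeq A \times [\ast/B]$, will be the main obstacle; I expect the mixed Künneth terms, which involve cochains on products $A_0^{\times i}\times \pi_0(A)^{\times j}$ depending smoothly on the $A_0$ coordinates, to require the most careful bookkeeping.
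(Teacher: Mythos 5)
Your opening reduction --- injectivity of a homomorphism is triviality of its kernel, so one must show that trivial quadratic form implies smooth braided equivalence to $A\times[\ast/B]$ --- is exactly the paper's first step. But from there your plan leaves the essential content of the lemma, namely producing the \emph{smooth} trivialisation, as an acknowledged ``main obstacle'', and the route you sketch for it does not obviously close. Concretely: you propose to package associator and braiding into a single cochain $(\omega,c)$ and cobound it smoothly via a K\"unneth decomposition of Segal--Mitchison cohomology. The vanishing $\H^{p>0}(\B A_0,\IR)=0$ is invoked in the paper only for \emph{compact} groups, whereas the identity component $A_0$ may contain $\IR^n$ factors; and even granting it, reducing to singular cohomology of $|\B A_0|$ does not finish the job --- for $A_0$ a torus, $\H^4(|\B A_0|,\IZ)$ is precisely the (nonzero) group in which associators live, so you would still owe an argument that the \emph{particular} class produced by a trivially-quadratic braided 2-group vanishes, which is the original problem in disguise.

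The paper's mechanism is different and more elementary at the crucial points. For the associator it constructs no smooth coboundary at all: it quotes the fact that Segal--Mitchison $\H^3(\B A,B)$ \emph{injects} into the corresponding discrete group cohomology, so the triviality of the underlying discrete class (supplied by Theorem~\ref{thm:EMquads}) already forces the smooth class to vanish. With the associator trivialised, the hexagon equations force the braiding to be an honest bilinear map $\beta:A\tensor A\to B$; triviality of $q(a)=\beta(a,a)$ makes $\beta$ alternating; and a braided trivialisation is a bilinear $\eta$ with $\beta(a,a')=\eta(a,a')-\eta(a',a)$, which is produced by choosing generators of $\pi_0(A)$ and of $\Lie(A_0)$, fixing an ordering, writing $\beta$ as an antisymmetric matrix, and taking $\eta$ to be its upper-triangular half. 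That last step is where the finite-generation hypothesis on $\pi_0(A)$ actually enters --- not in the bookkeeping of mixed K\"unneth terms. As written, your proposal names the right target but supplies neither of the two ingredients that make the argument go through.
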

\begin{proof}
    We need to show that there is only one (braided) equivalence class
    of braided 2-groups that give the \emph{trivial} quadratic form 
    $q:A \to B$.
    The associator
    is detected by degree three Segal-Mitchison 
    cohomology $\H^3(\B A,B)$~\cite[Thm 99]{schommer2011central}.
    This cohomology group \emph{injects} into the corresponding 
    cohomology group of discrete groups~\cite{wagemann2015cocycle}.
    By Theorem~\ref{thm:EMquads}, the associator of the underlying
    discrete 2-group is trivial up to equivalence.
    As a result, we can assume that the associator of the smooth
    2-group is trivial, so we are considering braidings on
    $A \times [\ast/B]$. The hexagon equations demand that 
    the braiding be a bilinear map $\beta:A \tensor A \to B$.
    As $q(a)=\beta(a,a)$ is trivial, $\beta$ is alternating.
    A braiding on $A \times [\ast/B]$ is equivalent to the trivial braiding
    if there is a monoidal functor whose underlying functor is the identity
    and whose monoidal structure 2-cell $\eta:A \times A \to B$ makes the square
    \begin{equation*}
        \begin{tikzcd}
            a \tensor a' \rar{\eta(a,a^\prime)} \dar[swap]{\beta(a,a^\prime)} &
            a \tensor a' \dar{\beta^\mathrm{triv}(a,a^\prime)=1} \\
            a' \tensor a \rar{\eta(a^\prime,a)} &
            a' \tensor a
        \end{tikzcd}
    \end{equation*}
    commute for all $a,a' \in A$.
    Monoidality of the functor demands that $\eta$ is a bilinear map.
    Hence, $\beta$ is equivalent to the trivial braiding if there is a
    bilinear map $\eta:A \times A \to B$ such that 
    $\beta(a,a^\prime) = \eta(a,a^\prime) - \eta(a^\prime,a)$.

    It remains to show that any alternating bilinear form $\beta$ admits such a 
    trivialisation $\eta$.
    The group $A$ splits as a product of a discrete group $K$ and an abelian 
    Lie group $H$.
    Let $\mathfrak{h}$ be the Lie algebra of $H$, and $x,x' \in \mathfrak{h}$, $k,k' \in K$.
    The value of a bilinear form on $(k \cdot \exp(\alpha x),k' \cdot \exp(\alpha' x')) \in A^{\times 2} = {(K \times H)}^{\times 2}$ is
    completely determined by its values on pairs of elements of $k,k',x,x'$ (where evaluating
    on a Lie algebra element takes the derivative).
    Pick a set of generators $I_K$ for $K$ and $I_\mathfrak{h}$ of the Lie algebra 
    $\mathfrak{h}$ of $H$, as well as an ordering on $I = I_K \cup I_\mathfrak{h}$.
    This allows encoding bilinear forms completely in an $I \times I$ matrix whose
    entries are given by evaluating on the generators corresponding to the
    row and column.
    As $\beta$ is alternating, it is represented by an antisymmetric matrix. 
    We may simply pick $\eta$ to be its upper triangular half.
\end{proof}

Let $\mathrm{Quad}^\mathrm{sm}(A,B) \to \H^3(\B A,B)$ be the homorphism extracting
the monoidal structure of a braided categorical group 
from the quadratic form.
There exist explicit formulae that recover an associator from the data of a quadratic form, 
see~\cite{quinn1998group,braunling2020quinn}. Taking monoidal equivalence classes 
gives the above map.
Its kernel is the group of braidings for
the 2-group with trivial associator. 
When the associator is trivial, the hexagon equations
reduce to character equations of the map $\beta:A \times A \to B$ 
in each variable. Thus braidings are exactly
bilinear maps $A \tensor A \to B$.
The image of the map $\mathrm{Quad}^\mathrm{sm}(A,B) \to \H^3(\B A,B)$ is called
\emph{soft} cohomology $\H^3_\mathrm{soft}(\B A,B) \subset \H^3(\B A,B)$
in~\cite{davydov2018third}. 
Soft cohomology is the group of (equivalence classes of) associators 
that can be part of a \emph{braided} monoidal structure.
In summary, we get a sequence of groups, exact at $\mathrm{Quad}^\mathrm{sm}(A,B)$:
\begin{equation*}
        \mathrm{Bilin}(A,B) \into
        \mathrm{Quad}^\mathrm{sm}(A,B) \onto
        \H^3_\mathrm{soft}(\B A,B) \into
        \H^3(\B A,B).
\end{equation*}
\begin{example}
    We work out the case $A=\IZ/n$, $B=U(1)$.
    The relevant cohomology group 
    is $\H^3(\B \IZ/n,U(1)) \simeq \H^4(\B \IZ/n,\IZ) = \IZ/n$.
    The group of bilinear forms  $\IZ/n \tensor \IZ/n = \IZ/n \to U(1)$ 
    is $\mathrm{Bilin}(\IZ/n,U(1)) = \IZ/n$, 
    generated by a primitive n-th root of unity.
    Quadratic forms $q:\IZ/n \to U(1)$ are determined by the value on a
    generator: $q(k) = {q(1)}^{k^2}$.
    This defines a quadratic form iff 
    $q(1) \in \IZ/(2n,n^2)$, so
    \begin{equation*}
        \mathrm{Quad}(\IZ/n,U(1)) = 
        \begin{cases}
            \IZ/n & n \text{ odd} \\
            \IZ/2n & n \text{ even.} \\
        \end{cases}
    \end{equation*}
    The exact sequence of groups introduced above implies
    \begin{equation*}
        \H^3_\mathrm{soft}(\B \IZ/n,U(1))= 
        \begin{cases}
            0     & n \text{ odd} \\
            \IZ/2 & n \text{ even}.
        \end{cases}
    \end{equation*}
    For odd $n$, only the 2-group with trivial
    associator admits a braiding, while for even $n$, the
    2-group corresponding to $[n/2] \in \H^3(\B \IZ/n,U(1)) = \IZ/n$ 
    also does.
\end{example}
\begin{notation}
    \label{not:braidedZ2}
    A quadratic form $\IZ/2 \to U(1)$ must send the generator of $\IZ/2$
    to a fourth root of unity, $q(1)=\ii^k$.
    We borrow notation from the world of tensor categories to denote the
    corresponding braided 2-group $(\IZ/2,q)$
    as below. 
    \begin{table}[H]
        \centering
        \begin{tabular}{c|cccc}
            $q(1)$ & 1                     & $\ii$          
                   & $-1$           & $-\ii$                    \\ \hline
            $(\IZ/2,q)$ & $\Vec_{\IZ/2}^\times$ & $\Semi^\times$ 
                        & $\sVec^\times$ & $\bSemi^\times$
         \end{tabular}
    \end{table} 
    The braided tensor category we denote by $\Semi$ is known as the \emph{semion category}.
\end{notation}
            
\section{The centres of categorical tori}
\label{sec:catTori}
A \emph{categorical torus} $\mscr{T}$~\cite{ganter2018categorical} 
is a central extension
\begin{equation*}
    [\ast/U(1)] \to \mscr{T} \to T
\end{equation*}
of a (compact) torus $T$ by $[\ast/U(1)]$.
They are classified by $\H^3(\B T,U(1)) \simeq \H^4(\B T,\IZ)$.
Let $T$ be a torus, 
$\Lambda = \Hom(T, U(1))$ its group of characters,
$\Pi = \Lambda^\vee = \pi_1 T$ its group of cocharacters and 
$\mathfrak{t} = \mathrm{Lie}(T)$ its Lie algebra, identified as the universal cover
of $T$ via 
\begin{tikzcd}
	\Pi & {\mathfrak{t}} & T.
	\arrow[hook, from=1-1, to=1-2]
	\arrow["\exp", two heads, from=1-2, to=1-3]
\end{tikzcd}
The topological space $|\B T|$ is homotopy equivalent to an
$r$-fold product of ${\IC\mathbb{P}}^\infty$'s, where $r$ is the 
rank of the torus $T$. The cohomology
ring of $\B T$ is naturally identified with 
$\H^\ast(\B T, \IZ)=\H^\ast(|\B T|,\IZ)=\mathrm{Sym}^\ast(\Lambda)$,
where $\Lambda$ is placed in degree 2.
We identify the group $\H^4(\B T,\IZ) = \mathrm{Sym}^2(\Lambda)$ 
with the group of symmetric bilinear forms 
$I:\mathfrak{t} \times \mathfrak{t} \to \IR$
such that for all $\pi \in \Pi$, $I(\pi,\pi)\in 2\IZ$.
An element $\lambda \in \Lambda = \Hom(T, U(1))$ induces a map of Lie algebras
$D_e\lambda: \mathfrak{t} \to \IR$. Then we send
$\lambda_1 \tensor \lambda_2 \in \Lambda^{\tensor 2}$ to the symmetric 
bilinear form $I:(x, y) \mapsto D_e\lambda_1(x) \cdot D_e\lambda_2(y) + D_e\lambda_2(x) \cdot D_e\lambda_1(y)$.

Given a class $I \in \H^4(\B T, \IZ)$, we will use $\tau$ to denote the induced map 
$\Pi \to \Lambda = \Pi^\vee$, given by $\tau(\pi)=I(\pi,-)$.
We further pick a (not necessarily symmetric) bilinear form
$J$ on $\mathfrak{t}$ such that $J$ restricts to 
a $\IZ$-valued form on $\Pi$, and $I =-(J + J^t)$,
where $J^t$ denotes the transpose of $J$.
We recall a construction of the categorical torus $T_J$ 
given in~\cite{ganter2018categorical}. It is a strict smooth 2-group
(Example~\ref{ex:strict2Gps}). 
The underlying Lie groupoid is 
\begin{equation*}
    \mathfrak{t} \ltimes (\Pi \times U(1)) \rightrightarrows \mathfrak{t},
\end{equation*}
with arrows $(x,\pi,w) = \left(x \xto{w} x+\pi\right)$. 
Composition and tensor product are given by 
\begin{align*}
    \left(x+\pi \xto{w^\prime} x+\pi+\pi^\prime\right) \comp 
    \left(x \xto{w} x+\pi\right) &= 
    \left(x \xto{w w^\prime} x+\pi+\pi^\prime\right) \\
    \left(x \xto{w} x+\pi\right) \tensor
    \left(x^\prime \xto{w^\prime} x+\pi^\prime\right) &=
    \left(x+x^\prime \xto{w w^\prime \exp(J(\pi,x^\prime))} 
    x+x^\prime+\pi+\pi^\prime\right).
\end{align*}
The associator and unitor cells are trivial.
This categorical torus is classified up to equivalence by the symmetric
bilinear form $I = -(J + J^t) \in \H^4(\B G,\IZ)$~\cite[Thm 4.1]{ganter2018categorical}.
The paper~\cite{freed2010topological} contains a sketch of calculation of the 
smooth Drinfel'd centre of $T_J$.
We complement this with a proof based on the construction above.
\begin{prop}
\label{prop:DZofTorus}
    The Drinfel'd centre of $T_J$ has underlying Lie groupoid
    \begin{equation*}
        \DZ T_J = \left((\mathfrak{t} \dirSum \Lambda)
        \ltimes (\Pi\times U(1)\right)  
        \rightrightarrows \mathfrak{t} \dirSum \Lambda,
    \end{equation*}
    with arrows $(x, \lambda, \pi, w):(x, \lambda) \to 
    (x+\pi, \lambda + \tau(\pi))$.
    The braiding is given by
    \begin{equation*}
        \beta_{[x, \lambda],[x^\prime,\lambda^\prime]} = 
        \lambda(x^\prime) \exp(J(x^\prime,x)).
    \end{equation*}
\end{prop}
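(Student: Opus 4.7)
The plan is to use the strict presentation of $T_J$ given in the statement and explicitly analyse smooth half-braidings on this model, following the sketch in~\cite{freed2010topological}. Since both functors $x \tensor -$ and $- \tensor x$ agree on objects (tensor product on objects is just addition on the object manifold $\mathfrak{t}$), a smooth half-braiding for a $U$-point $x$ of $T_J$ reduces to a smooth function $\gamma:\mathfrak{t} \to U(1)$ (suitably parameterised by $U$) assigning to each $y$ an automorphism of $x+y$, subject to two equations coming from naturality and the hexagon.

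First I would unpack naturality with respect to a morphism $y \to y+\pi$ in $T_J$. Using the tensor formula for $T_J$, the naturality square yields the $\Pi$-equivariance
\begin{equation*}
    \gamma(y+\pi) = \gamma(y) \exp(J(\pi, x)) \qquad \text{for all } \pi \in \Pi.
\end{equation*}
Second, since the associators are trivial, the hexagon equation unwinds to multiplicativity $\gamma(y_1+y_2)=\gamma(y_1)\gamma(y_2)$ (in particular forcing the constant part to be $1$). Any joint solution is therefore a character shifted by the inhomogeneous term $\exp(J(y,x))$, and the equivariance condition restricted to $\pi \in \Pi$ forces the character to be integral on $\Pi$. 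Thus the space of half-braidings at $x$ is parameterised exactly by $\Lambda = \Hom(T,U(1))$:
\begin{equation*}
    \gamma_\lambda(y) = \exp(\lambda(y) + J(y, x)).
\end{equation*}
This identifies the object manifold of $\DZ T_J$ with $\mathfrak{t} \dirSum \Lambda$.

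To get the morphism structure, I would compute when a morphism $x_1 \xto{w} x_1+\pi$ in $T_J$ underlies a morphism of centre pieces $(x_1,\gamma_{\lambda_1}) \to (x_1+\pi, \gamma_{\lambda_2})$. The intertwining condition between the two half-braidings (checked at any $y$ and extended by equivariance) forces $\lambda_2 - \lambda_1 = -J(-,\pi) - J(\pi,-) = \tau(\pi)$ while leaving $w \in U(1)$ free. These data assemble into precisely the smooth groupoid in the proposition, with $\Pi$ acting on the object manifold as claimed. The braiding $\beta_{X,Y} = \gamma^X_Y$ is then obtained by evaluating the half-braiding $\gamma_\lambda$ of $X=[x,\lambda]$ at the object $x^\prime$ of $Y = [x^\prime,\lambda^\prime]$, giving $\lambda(x^\prime)\exp(J(x^\prime,x))$ after identifying the character $\lambda \in \Lambda$ with its exponentiated Lie-algebra derivative.

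The main technical care I expect to need is in the sheafification to general test manifolds $U$: a $U$-point of $T_J$ need not lift to a single smooth map $U \to \mathfrak{t}$, but only to a map $\tilde U \to \mathfrak{t}$ on a cover $\tilde U \onto U$ equipped with descent data valued in $\Pi \times U(1)$. I would check that the fibrewise analysis above is compatible with this descent --- in particular that the formula $\gamma_\lambda(y) = \exp(\lambda(y)+J(y,x))$ transforms correctly under the $\Pi$-action on the object coordinate --- and then invoke the equivalence $\LieGpdW \eqto \DiffSt$ of Theorem~\ref{thm:pronksThm} to package the result as the Lie groupoid claimed in the proposition.
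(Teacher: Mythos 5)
Your proposal is correct and follows essentially the same route as the paper: work on the strict model, use naturality to pin the half-braiding to the form $\gamma(y)=\lambda(y)\exp(J(y,x))$ with $\lambda\in\Lambda$ and the trivial associator to reduce the hexagon to multiplicativity, then read off the $\Pi$-action on $(x,\lambda)$ from the intertwining condition and the braiding as $\gamma(x')$. Your extra care about $U$-points only lifting to $\mathfrak{t}$ on a cover is a slightly more explicit treatment of the descent step, which the paper handles by representing objects and half-braidings by smooth maps out of $V$ and checking the conditions pointwise.
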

\begin{proof}
    A half-braiding on an object $x \in \mathfrak{t}$ is a 2-cell
    $\gamma:x \tensor - \to - \tensor x$ (subject to the hexagon equation). 
    Tensoring with $x$ on either side is a smooth functor $T_J \to T_J$. 
    Hence each such 2-cell $\gamma$ is 
    represented by a smooth natural transformation:
    a smooth map 
    $\gamma: \mathfrak{t} \to \mathfrak{t} \ltimes (\Pi \times U(1))$, 
    which sends $y \in \mathfrak{t}$ to an endomorphism of $x+y=y+x$.
    In the absence of associators, the hexagon equation simplifies to 
    the character equation $\gamma_{y+z}=\gamma_z \gamma_y$.
    The condition that $\gamma$ be natural gives it
    the form 
    \[
        \gamma(y) = \lambda(y) \cdot \exp(J(y,x)), 
    \]
    where $\lambda \in \Lambda$ is a smooth character of $T$.
    This allows a parameterisation of objects of $\DZ T_J(\ast)$ by pairs 
    $(x,\lambda) \in \mathfrak{t} \dirSum \Lambda$.
    
    
    Morphisms $(x,\lambda) \to (x^\prime,\lambda^\prime)$ are 
    morphisms $x \to x^\prime$ which are compatible with the braidings.
    The morphism $(x,\pi,w)\in \mathfrak{t} \ltimes (\Pi \times U(1))$ 
    is a morphism of centre pieces $(x,\lambda) \to (x+\pi, \lambda^\prime)$
    precisely when 
    \[
    \lambda^\prime(y) = \lambda(y) \cdot \exp(-J(y,\pi)-J(\pi,y)) =
    \left(\lambda + \tau(\pi)\right)(y).
    \]
    for all $y \in \mathfrak{t}$.
    
    
    So far, we have calculated $\DZ T_J(\ast)$. To deduce the 
    smooth structure, note that any object $\tilde{x} \in T_J(V)$ 
    can be represented by a smooth map $\tilde{x}:V \to \mathfrak{t}$,
    and the corresponding half-braidings are represented by smooth maps
    $\tilde{\gamma}:V \times \mathfrak{t} \to \mathfrak{t} \ltimes (\Pi \times U(1))$.
    Naturality and the hexagon equation can now be checked pointwise.
    
    
    The braiding on the centre can be computed in $\DZ(T_J(\ast)) $, where 
    it takes the usual form
    $\beta_{(x,\gamma),(x^\prime,\gamma^\prime)} = 
    \gamma(x^\prime)$~\cite[Ch 8.5]{etingof2016tensor}.
    This completes the proof.
\end{proof}

As mandated by Lemma~\ref{lem:pi1ZCispi1C}, $\pi_1 \DZ T_J = U(1)$. 
The object group of the centre is 
\begin{equation*}
    \pi_0 \DZ T_J = \frac{\mathfrak{t}\dirSum \Lambda}{\Pi},
\end{equation*}
with $\Pi \ni \pi \mapsto (\pi,\tau(\pi)) \in \mathfrak{t}\dirSum \Lambda$.
We end this section by computing the \emph{maximal compact subgroup} of $\pi_0 \DZ T_J$.
The cohomology class corresponding to the symmetric bilinear form
$I =-(J + J^t)$ induces the maps 
$\tau: \Pi \to \Lambda$ and 
$\tau_\IR \define \tau \tensor \IR: \mathfrak{t} \to \mathfrak{t}^\ast$.
The case where $\tau_\IR$ is an isomorphism (such levels are referred to as
\emph{non-degenerate}) was already analysed  in~\cite{freed2010topological}.
The map $(\mathfrak{t} \dirSum \Lambda)/\Pi \to \Lambda/\Pi$ is split by 
$s:\lambda \mapsto (\tau_\IR^{-1}\lambda,\lambda)$. This furnishes an isomorphism 
$\pi_0 \DZ T_J \eqto \mathfrak{t} \dirSum (\Lambda/\Pi)$. Note that $\Lambda/\Pi$ is
finite, so $\pi_0 \DZ T_J$ is a direct sum of $\IR^{\rk{\tau_\IR}}$ 
with a finite abelian group.

For general level, we pick a splitting $\Pi \iso \Pi_\mker \dirSum \Pi_\coim$, 
where $\Pi_\mker \define \ker{\tau}$. We tensor with $\IR$ to obtain 
$\mathfrak{t} \iso \mathfrak{t}_\mker \dirSum \mathfrak{t}_\coim$.
Now $\tau_\IR$ restricts to an isomorphism 
$\tau_\im:\mathfrak{t}_\coim \eqto \im\ \tau_\IR \subset \mathfrak{t}^\ast$. 
We denote the intersection ${(\im\ \tau_\IR)} \cap {\Lambda}$ by $\Lambda_{\im_\IR}$.
Lastly, we pick a decomposition 
$\Lambda \iso \Lambda_{\im_\IR} \dirSum \Lambda_{\coker_\IR}$.
These isomorphisms assemble into 
\begin{equation*}
    \pi_0 \DZ T_J \eqto \Lambda_{\coker_\IR} \dirSum 
    \frac{\mathfrak{t}_\mker}{\Pi_\mker} \dirSum 
    \frac{\mathfrak{t}_\coim \dirSum \Lambda_{\im_\IR}}{\Pi_\coim} \iso 
    \Lambda_{\coker_\IR} \dirSum T_\mker \dirSum \mathfrak{t}_\coim 
    \dirSum \Lambda_{\im_\IR}/\Pi_\coim,
\end{equation*}
where the second isomorphism uses that $\tau$ is non-degenerate when 
viewed as a map $\Pi_{\coim} \to \Lambda_{\im_\IR}$.
As before, $\Lambda_{\im_\IR}/\Pi_\coim$ is a finite group.
Both $\Lambda_{\coker_\IR}$ and $\mathfrak{t}_\coim$ are free, and 
$T_\mker \define \mathfrak{t}_\mker/\Pi_\mker$ is a compact torus.
One may now read off the 
maximal compact subgroup as $T_\mker \dirSum \Lambda_\im/\Pi_\coim$.
We are justified in calling this \emph{the} maximal compact subgroup: 
any element of the free subgroup 
$\Lambda_{\coker_\IR} \dirSum \mathfrak{t}_\coim$ generates 
a non-compact group, so all compact subgroups of 
$\pi_0 \DZ T_J$ must intersect trivially with it ---
the maximal compact subgroup we computed above 
is the orthogonal complement of 
$\Lambda_{\coker_\IR} \dirSum \mathfrak{t}_\coim$.
Using the explicit maps above, it is straightforward to compute the 
induced braiding on it.
We call the resulting braided categorical group the maximal compact sub-2-group.
\begin{prop}
\label{prop:maxCpctSubGp}
    The maximal compact sub-2-group of $\DZ T_J$ is the braided 2-group 
    with 
    \begin{equation*}
        \pi_0\operatorname{Cpct}{\DZ T_J} = 
        T_{\mker} \dirSum \Lambda_\im / \Pi_\coim,
    \end{equation*}
    $\pi_1 \DZ T_J = U(1)$, and braiding encoded by the quadratic form
    \begin{equation*}
        \bar{q}([t_\mker, \lambda])=\lambda(t_\mker + \tau_\im^{-1}\lambda) 
        \exp(J(\tau_\im^{-1}\lambda,\tau_\im^{-1}\lambda)).
    \end{equation*}
\end{prop}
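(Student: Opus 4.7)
The plan is to combine the structural decomposition of $\pi_0 \DZ T_J$ carried out in the paragraphs immediately preceding the proposition with the explicit self-braiding formula from Proposition~\ref{prop:DZofTorus}. The identification of the compact subgroup itself is already accomplished in the text; what remains is to read off the induced quadratic form and verify that it is well-defined and simplifies as claimed.

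First, I would fix notation. Using the splittings $\Pi \iso \Pi_\mker \dirSum \Pi_\coim$ and $\Lambda \iso \Lambda_{\im_\IR} \dirSum \Lambda_{\coker_\IR}$ from the preceding discussion, an element of the maximal compact sub-2-group is represented by a class $[t_\mker, \lambda]$ with $t_\mker \in \mathfrak{t}_\mker$ and $\lambda \in \Lambda_{\im_\IR}$. Its image in $\pi_0 \DZ T_J = (\mathfrak{t} \dirSum \Lambda)/\Pi$ under the isomorphism constructed in the preamble is the class $[t_\mker + \tau_\im^{-1}\lambda,\ \lambda]$, since the section $s : \Lambda_{\im_\IR} \to \mathfrak{t} \dirSum \Lambda_{\im_\IR}$ is precisely $\lambda \mapsto (\tau_\im^{-1}\lambda, \lambda)$.

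Second, I would plug this representative into the self-braiding formula $\beta_{[x,\lambda],[x,\lambda]} = \lambda(x)\exp(J(x,x))$ from Proposition~\ref{prop:DZofTorus}, obtaining
\begin{equation*}
q([t_\mker,\lambda]) = \lambda(t_\mker + \tau_\im^{-1}\lambda)\exp\bigl(J(t_\mker + \tau_\im^{-1}\lambda,\ t_\mker + \tau_\im^{-1}\lambda)\bigr).
\end{equation*}
Expanding the argument of $J$ bilinearly produces the four terms $J(t_\mker,t_\mker)$, $J(t_\mker,\tau_\im^{-1}\lambda)$, $J(\tau_\im^{-1}\lambda,t_\mker)$ and $J(\tau_\im^{-1}\lambda,\tau_\im^{-1}\lambda)$. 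To simplify, I would use the defining relation $I = -(J+J^t)$ together with the fact that $t_\mker \in \mathfrak{t}_\mker = \ker \tau_\IR$, which gives $I(t_\mker, y) = 0$ for all $y \in \mathfrak{t}$. In particular $2J(t_\mker,t_\mker) = -I(t_\mker,t_\mker) = 0$, and $J(t_\mker,\tau_\im^{-1}\lambda) + J(\tau_\im^{-1}\lambda,t_\mker) = -I(t_\mker,\tau_\im^{-1}\lambda) = 0$. The three cross and diagonal-in-$\mathfrak{t}_\mker$ terms thus cancel and the formula collapses to $\lambda(t_\mker + \tau_\im^{-1}\lambda)\exp(J(\tau_\im^{-1}\lambda,\tau_\im^{-1}\lambda))$, as claimed.

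Third, I would verify well-definedness under the ambiguity in representatives. Shifting $t_\mker$ by $\pi_\mker \in \Pi_\mker$ changes $\lambda(t_\mker + \tau_\im^{-1}\lambda)$ by $\lambda(\pi_\mker) \in U(1)$, which equals $1$ since $\lambda \in \Lambda$ is an integral character and $\pi_\mker \in \Pi$. Shifting $\lambda$ by an element of $\Pi_\coim$ (that is, by $\tau(\pi_\coim) \in \Lambda_{\im_\IR}$ for some $\pi_\coim \in \Pi_\coim$) shifts $\tau_\im^{-1}\lambda$ by $\pi_\coim$, and the resulting change in the expression is again integral, using that $J$ is integer-valued on $\Pi \tensor \Pi$; hence trivial under $\exp$. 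The main (and really only) obstacle is this bookkeeping around representatives — the core computation is the two-line application of the identity $I = -(J+J^t)$ above. Once that is in place, the quadratic form displayed in the proposition follows immediately, and the braided 2-group structure is determined by Theorem~\ref{thm:EMquads} together with the injectivity established in the preceding section.
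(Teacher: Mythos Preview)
Your proposal is correct and follows essentially the same route as the paper: plug the representative $(t_\mker + \tau_\im^{-1}\lambda,\lambda)$ into the self-braiding formula from Proposition~\ref{prop:DZofTorus} and kill the $t_\mker$-terms in the exponential using $I(t_\mker,-)=0$ together with $I=-(J+J^t)$. Your well-definedness check in the third paragraph is harmless but redundant, since $\bar q$ is the restriction of the already well-defined quadratic form $q$ on $(\mathfrak t\oplus\Lambda)/\Pi$ along the isomorphism built in the preamble.
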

\begin{proof}
    This is a straightforward computation. The component 
    $T_\mker$
    does not contribute in the exponential because 
    \begin{align*}
    J(t_\mker,t_\mker)=\tfrac{1}{2}\tau(t_\mker)(t_\mker)=0
    &&
    \text{ and }
    &&
    J(t_\mker,\tau_\im^{-1}\lambda)+J(\tau_\im^{-1}\lambda,t_\mker)=0.
    &
    \qedhere
    \end{align*}
\end{proof}

We record one important feature of this subgroup.
\begin{lemma}
\label{lem:maxCpctInjects}
    The map
    \begin{equation*}
        u:\pi_0 \operatorname{Cpct}{\DZ T_J} \to T
    \end{equation*}
    that forgets the half-braiding is \emph{injective}.
\end{lemma}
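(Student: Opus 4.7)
The plan is to unpack the splittings used in the construction of the maximal compact sub-2-group and then compute the kernel of $u$ directly. The key observation is that the forgetful map $u$ arises from projection onto the $\mathfrak{t}$-factor, so its kernel on $\pi_0 \DZ T_J = (\mathfrak{t} \dirSum \Lambda)/\Pi$ is generated by classes of the form $[0, \lambda]$. The question therefore reduces to whether the maximal compact sub-2-group intersects this kernel trivially.

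First I would trace through the isomorphism
\begin{equation*}
\pi_0 \DZ T_J \iso \Lambda_{\coker_\IR} \dirSum T_\mker \dirSum \mathfrak{t}_\coim \dirSum \Lambda_{\im_\IR}/\Pi_\coim
\end{equation*}
to see that $(t_\mker, [\lambda]) \in T_\mker \dirSum \Lambda_{\im_\IR}/\Pi_\coim$ corresponds to the class $[t_\mker + \tau_\im^{-1}\lambda,\, \lambda] \in (\mathfrak{t} \dirSum \Lambda)/\Pi$, which $u$ sends to the class of $t_\mker + \tau_\im^{-1}\lambda$ in $T = \mathfrak{t}/\Pi$.

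Second, I would suppose this class vanishes, so $t_\mker + \tau_\im^{-1}\lambda = \pi$ for some $\pi \in \Pi$. Decomposing $\pi = \pi_\mker + \pi_\coim$ along $\Pi = \Pi_\mker \dirSum \Pi_\coim$ and comparing components in the internal direct sum $\mathfrak{t} = \mathfrak{t}_\mker \dirSum \mathfrak{t}_\coim$ yields the two equations $t_\mker = \pi_\mker$ and $\tau_\im^{-1}\lambda = \pi_\coim$. The first forces $t_\mker = 0$ in $T_\mker = \mathfrak{t}_\mker/\Pi_\mker$; applying the isomorphism $\tau_\im$ to the second gives $\lambda = \tau(\pi_\coim)$, hence $[\lambda] = 0$ in $\Lambda_{\im_\IR}/\Pi_\coim$ by definition of that quotient.

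I do not anticipate any real obstacle here. The crucial input, namely that $\tau_\im\colon \mathfrak{t}_\coim \to \im\,\tau_\IR$ is by construction an isomorphism, is already baked into the setup, and everything else is careful bookkeeping of the splittings chosen in Proposition~\ref{prop:maxCpctSubGp}. If anything, the only subtlety is verifying the explicit formula for how an element of $T_\mker \dirSum \Lambda_{\im_\IR}/\Pi_\coim$ embeds into $(\mathfrak{t} \dirSum \Lambda)/\Pi$ consistently with the identifications used in Proposition~\ref{prop:DZofTorus}.
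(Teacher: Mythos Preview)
Your proof is correct and takes essentially the same approach as the paper. The paper phrases it more concisely by observing that the decomposition $\mathfrak{t} = \mathfrak{t}_\mker \dirSum \mathfrak{t}_\coim$ descends to $T = T_\mker \dirSum T_\coim$ and that under this splitting $u$ is the direct sum of injective maps $\id \dirSum \tau_\im^{-1}\colon T_\mker \dirSum \Lambda_{\im_\IR}/\Pi_\coim \hookrightarrow T_\mker \dirSum T_\coim$; your explicit kernel computation is exactly what unwinds this statement.
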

\begin{proof}
    The decomposition of $\mathfrak{t}$ 
    descends to a decomposition of the torus $T$.
    Then $u$ is the direct sum
    of injective maps
    \begin{equation*}
        \begin{tikzcd}[column sep=huge]
            \pi_0 \operatorname{Cpct}{\DZ T_J} = 
            T_{\mker} \dirSum \Lambda_\im/\Pi_\coim 
            \rar["\id \dirSum \tau_\im^{-1}",hook] & 
            T_\mker \dirSum \mathfrak{t}_\coim/\Pi_\coim = T_\mker \dirSum T_\coim.
        \end{tikzcd}
        \qedhere
    \end{equation*}
\end{proof}

\section{The centres of String 2-groups}
\label{sec:stringGrps}
In this section, we compute the centre $\DZ$ for the String groups $G_k$.
We do this using obstruction theory.
In particular, we establish an exact sequence of groups in 
Proposition~\ref{prop:exactSequence}, which relates 
the group $\pi_0 \DZ G_k$ to the ordinary centre $Z(G)$ and 
the group cohomology of $G$.
When $G$ is simply-connected, this sequence allow us to deduce that 
every element of the ordinary centre $z \in Z(G)$ admits a 
unique lift to the Drinfel'd centre $\DZ G_k$. 
This lift restricts to an element in $\DZ T_k$, 
where $T_k$ is a maximal 2-torus for $G_k$. 
The computations of Section~\ref{sec:catTori} allow us to
deduce the resulting braided structure.
We then treat the case of non-simply-connected $G$
by picking a simply-connected 
covering group $\pi:\tilde{G} \onto G$ and checking which 
centre pieces in $\DZ \tilde{G}_{\pi^\ast k}$ descend to 
centre pieces of $\DZ G_k$.

Finally, we show the 
Drinfel'd centre $\DZ G_k$ agrees with 
the invertible part of $\Rep^k LG$ (if $G$ is semisimple and there is 
no factor of $E_8$ at level 2).

\subsection*{Simply-connected Lie groups}
Let $G$ be a compact simple simply-connected Lie group,
pick a maximal torus $T \into G$, and let $\mathfrak{t}$,
$\mathfrak{t}^\ast$, $\Lambda$, $\Pi$ be as in Section~\ref{sec:catTori}.
The Lie algebra $\mathfrak{t} \subset \mathfrak{g} = \Lie(G)$ 
is also called a \emph{Cartan algebra} for $G$. 
The \emph{Weyl group} $W = N(T)/T$, where $N(T)$ denotes the normaliser of $T$ in $G$, 
acts on $T$ by conjugation.

Recall from Section~\ref{sec:catTori} that $\H^4(\B T, \IZ)$ is identified with 
the group of $\IR$-valued symmetric bilinear forms $I$ on $\mathfrak{t}$ satisfying 
$I(\pi,\pi) \in 2 \IZ$ for all $\pi \in \Pi \subset \mathfrak{t}$.
Borel~\cite{borel1953cohomologie,borel1954homologie} identified 
$\H^4(\B G, \IZ)$ as the Weyl-invariant part 
$\H^4(\B G, \IZ) = \H^4(\B T, \IZ)^W$, see~\cite{toda1987cohomology} for 
a review. 
We identify $\H^4(\B T, \IZ)^W$ with the group of those
inner products $I$ as above that are invariant under the $W$-action on
$\mathfrak{t}$. 
This identification holds true for general compact connected Lie 
groups~\cite[Thm 6]{henriques2017classification}.

For $G$ compact simple and simply-connected, 
$\H^4(\B G,\IZ) = \H^4(\B T,\IZ)^W \iso \IZ$
has a distinguished generator: the basic positive-definite Weyl-invariant
inner product $I:\mathfrak{t} \times \mathfrak{t} \to \IR$,
normalised such that short coroots have norm squared 2.
We denote by $k \in \IZ$ the cohomology class corresponding 
to $k \cdot I \in \H^4(\B G, \IZ)$. 
As before, we denote the map induced by $k \cdot I$ on $\Pi$ by
$\tau: \Pi \to \Lambda$ --- recall it sends $\pi \mapsto k \cdot I(\pi,-)$.
Every non-zero cohomology class $k$ induces an isomorphism
$\tau_\IR = \tau \tensor \IR: \mathfrak{t} \to \mathfrak{t}^\ast$.

One may pull back the extension of $G$ by $[\ast/U(1)]$ 
along the inclusion of a maximal torus $T \into G$
to obtain a maximal 2-torus.
In~\cite{ganter2018categorical}, Ganter shows that the
maximal 2-torus of $G_k$ is the categorical torus $T_{J_k}$
associated to a (non-symmetric) bilinear form 
$J_k:\mathfrak{t} \times \mathfrak{t} \to \IR$ 
such that $J_k+J_k^t = -k \cdot I$.

\begin{prop}
\label{prop:exactSequence}
    Let $H_\omega$ denote the central extension of $H$ by $[\ast/A]$ ($A$ an abelian Lie 
    group) corresponding to $\omega \in \H^3(\B H,A)$.
    Then there is an exact sequence
    \begin{equation*}
        0 \to \H^1(\B H, A) \to \pi_0 \DZ H_\omega \to Z(H) 
        \to \H^2(\B H, A).
    \end{equation*}
\end{prop}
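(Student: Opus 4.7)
The plan is to read this as a long exact sequence associated to the forgetful map $\DZ H_\omega \to H_\omega \to H$ at the level of $\pi_0$, proved by obstruction theory. Throughout, smoothness must be tracked using Segal-Mitchison cohomology computed from the double complex of Definition~\ref{def:SegalMitchisonCohomology} on a good simplicial cover $Y_\bullet \onto \B H$.

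\textbf{Construction of the maps.} The map $\pi_0 \DZ H_\omega \to Z(H)$ is the composition of the forgetful functor $\DZ H_\omega \to H_\omega$ with the projection $H_\omega \to H$; its image lies in $Z(H)$ because the mere existence of a half-braiding $\gamma : X \tensor - \eqto - \tensor X$ forces the underlying element of $H$ to commute with all others. The map $\H^1(\B H, A) = \Hom_{\mathrm{sm}}(H, A) \to \pi_0 \DZ H_\omega$ is given by $\chi \mapsto (\ONE, \chi \cdot \gamma_{\ONE}^{\mathrm{triv}})$: one takes the canonical half-braiding of the monoidal unit and twists the component over $y \in H_\omega$ by $\chi(\bar y) \in A = \End(y)$, using that the conjugation action is trivial for central extensions so that $\End(y) \iso A$ canonically (cf.\ Lemma~\ref{lem:pi1ZCispi1C}). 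The hexagon for this half-braiding is precisely the character equation.

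\textbf{Exactness at $\H^1$ and at $\pi_0 \DZ H_\omega$.} Injectivity of the first map follows because an isomorphism of centre pieces $(\ONE, \gamma) \to (\ONE, \gamma')$ is an automorphism of $\ONE$ intertwining $\gamma$ and $\gamma'$; since $A$ is abelian the intertwining condition forces $\gamma = \gamma'$. For exactness at the middle term, a centre piece in the kernel has underlying object over $1 \in H$, hence isomorphic to $\ONE$ in $H_\omega$, and the transported half-braiding is one of those classified by a smooth character.

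\textbf{The obstruction map and main obstacle.} Given $z \in Z(H)$, I would choose a smooth lift $X_z$ of $z$ to $H_\omega$ (working locally or via anafunctors) and, for each $y \in H_\omega$, pick some isomorphism $\gamma_y : X_z \tensor y \eqto y \tensor X_z$; this is possible locally smoothly since both sides lie over $z y = y z$ and so are connected by an $A$-torsor of isomorphisms. The failure of the hexagon then defines a smooth $A$-valued 2-cochain $c(y, y')$, and the Segal-Mitchison cocycle representing $\omega$ together with the associativity of $\tensor$ in $H_\omega$ force $c$ to be a 2-cocycle. Modifying the choices of $\gamma_y$ by an $A$-valued cochain modifies $c$ by a coboundary, giving a well-defined class $\partial(z) = [c] \in \H^2(\B H, A)$; by construction $\partial(z) = 0$ precisely when a half-braiding on some lift of $z$ exists, yielding exactness at $Z(H)$. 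The principal difficulty is executing all of this with manifest smoothness: one must carry out every lift and choice on a good simplicial cover of $\B H$, verify that the resulting cochain is simplicial-smooth, and check independence of choices at the level of the \v{C}ech-simplicial double complex of Section~\ref{sec:2Grps}. A secondary task is verifying that $\partial$ is a group homomorphism, which requires a cocycle-level calculation using the hexagon and associativity for the product $z z'$.
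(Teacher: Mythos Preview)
Your proposal is correct and follows essentially the same obstruction-theoretic approach as the paper. The paper streamlines your argument by passing to a skeletal model of $H_\omega$, so that a half-braiding for $g$ is literally a $1$-cochain $\gamma$ and the hexagon equation reads $d\gamma = \omega(-,-|g)$ with the explicit formula $\omega(x,y|g)=\omega(x,g,y)/\big(\omega(g,x,y)\,\omega(x,y,g)\big)$; this makes the obstruction class manifest without your intermediate step of choosing arbitrary $\gamma_y$ and measuring the failure, and the smooth case is then handled exactly as you suggest by trivialising over suitable covers.
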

\begin{proof}
    Each centre piece $(g,\gamma) \in \DZ H_\omega(\ast)$ must 
    satisfy $g \in Z(H)$, otherwise $g x \not \iso x g$ for 
    some $x \in H$.
    In the discrete case, one can work with a skeletal representative of $H_\omega$.
    The hexagon equation evaluated at $(x,y) \in G\times G$ is then
    \begin{equation*}
        (d\gamma)(x,y) \define \frac{\gamma(xy)}{\gamma(x)\gamma(y)} =
        \frac{\omega(x,g,y)}{\omega(g,x,y)\omega(x,y,g)}
        \eqqcolon \omega(x,y|g),
    \end{equation*}
    so $\gamma$ is a 1-cochain whose coboundary is $\omega(-,-|g)$.
    It is straightforward to check that $\omega(-,-|g)$ is indeed a 2-cocycle.
    The map $Z(H) \to \H^2(\B H,A)$ assigns to each element
    the equivalence class of the corresponding cocycle:
    $g \mapsto [\omega(-,-|g)]$.
    The element $g$ admits a half-braiding precisely
    if $\omega(-,-|g)$ is a coboundary, which proves exactness 
    at $Z(H)$.
    
    The kernel of the map $\pi_0 \DZ H_\omega \to Z(H)$ is 
    the group of half-braidings for the identity element of $H$.
    The associator $\omega$ can be chosen to be trivial whenever 
    at least one of the 
    entries is the identity, and the hexagon equation
    becomes the equation of an $A$-valued character on $H$. These
    are precisely the elements of $\H^1(\B H, A)$.
    
    
    We now port the above proof to the smooth case.  
    Recall that the Lie groupoid modelling $H_\omega$ is of
    the form $L \rightrightarrows Y$, where $Y \onto H$ is a surjective
    submersion and $L \to Y^{[2]}$ is an $A$-bundle.
    We pick a cover $\kappa: V \onto Y$ such that the line bundle $L$ 
    trivialises over $V^{[2]}$,
    and replace $H_\omega$ by the equivalent groupoid 
    $\kappa^\ast H_\omega = V^{[2]} \times A 
    \rightrightarrows V$.
    Then we pick a cover $\pi:W \to V \times V \to Y \times Y$ 
    such that all six functors/vertices
    in the hexagon equation for $\gamma$ 
    (ie. $(g \tensor -) \tensor -, g \tensor (- \tensor -), 
    (- \tensor -) \tensor g \ldots$) are representable by smooth 
    functors $\pi^\ast (H_\omega \times H_\omega) \to \kappa^\ast H_\omega$.
    Each 2-morphism/edge in the hexagon equation 
    ($\omega(z,-,-),\gamma(-\tensor-),\ldots$)
    is then represented by
    a smooth natural transformation.
    Each pair of functors $F_i, F_j$ gives a map
    $f_{ij}:W \to V^{[2]}$, and a smooth natural transformation $F_i \Rightarrow F_j$
    is a section of the pullback bundle $f_{ij}^\ast (V^{[2]} \times A)$.
    Under the choices we made, these bundles are all trivial. 
    The hexagon axiom reduces to the same equation as above,
    $d\gamma = \omega(-,-|g)$,
    except it is now an equation in Segal-Mitchison cohomology.
    The pair $(V \to H, W \to H \times H)$ forms the first two
    steps of a simplicial cover of $\B H$.
    The maps $\gamma:V \to A, \omega(-,-|g):W \to A$ 
    represent Segal-Mitchison cochains.
\end{proof}

\begin{cor}
\label{cor:pi0isZG}
    Each element $z \in Z(G)$ admits a unique 
    half-braiding over $G_k$:
    \begin{equation*}
        \pi_0\DZ G_k = Z(G).
    \end{equation*}
\end{cor}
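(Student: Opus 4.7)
The plan is to apply Proposition~\ref{prop:exactSequence} with $H=G$ and $A=U(1)$, which yields the exact sequence
\begin{equation*}
    0 \to \H^1(\B G, U(1)) \to \pi_0 \DZ G_k \to Z(G) \to \H^2(\B G, U(1)).
\end{equation*}
It then suffices to show that both $\H^1(\B G, U(1))$ and $\H^2(\B G, U(1))$ vanish, as this forces the middle arrow to be an isomorphism, yielding both the existence (surjectivity of $\pi_0 \DZ G_k \to Z(G)$) and uniqueness (injectivity) of the half-braiding on each central element.

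To handle the two vanishing statements uniformly, I would invoke the long exact sequence associated to the exponential sequence $\IZ \to \IR \to U(1)$ of coefficients, as recalled in Section~\ref{sec:2Grps}. Since $G$ is compact, $\H^p(\B G, \IR)$ vanishes for $p>0$ (cited as~\cite[Cor 97]{schommer2011central}), so the connecting homomorphism gives isomorphisms $\H^p(\B G, U(1)) \iso \H^{p+1}(\B G, \IZ)$ for all $p \geq 1$. Thus the task reduces to showing $\H^2(\B G, \IZ) = 0$ and $\H^3(\B G, \IZ) = 0$.

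For these, I would use the identification of Segal-Mitchison cohomology with discrete integer coefficients with the singular cohomology of the geometric realisation $|\B G|$ (Example~\ref{ex:special_case_Segal_cohomology}). Because $G$ is compact simple simply-connected, it is 2-connected (using $\pi_1 G = 0$ by hypothesis and the general fact $\pi_2 G = 0$ for any Lie group, as cited in the discussion preceding $\H^3(\B G,U(1)) \iso \IZ$). Hence $|\B G|$ is 3-connected, and the Hurewicz theorem together with the universal coefficient theorem yields $\H^i(|\B G|,\IZ) = 0$ for $0 < i < 4$, giving the required vanishing.

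The argument is essentially formal once Proposition~\ref{prop:exactSequence} is in hand, and there is no real obstacle: the only step requiring any care is checking that the cohomological identifications used (exponential long exact sequence, identification of Segal-Mitchison cohomology with discrete coefficients, identification with topological cohomology of $|\B G|$) all apply in the range of degrees needed, but each of these was set up explicitly in Section~\ref{sec:2Grps}.
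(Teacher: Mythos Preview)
Your proposal is correct and follows essentially the same argument as the paper: apply the exact sequence of Proposition~\ref{prop:exactSequence}, use compactness of $G$ to identify $\H^p(\B G,U(1))\simeq\H^{p+1}(\B G,\IZ)$ via the exponential sequence, and then invoke the $2$-connectedness of $G$ (hence $3$-connectedness of $|\B G|$) to kill $\H^2(\B G,\IZ)$ and $\H^3(\B G,\IZ)$. The paper's proof is just a terser version of exactly these steps.
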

\begin{proof}
    Compactness of $G$ implies $\H^\ast(\B G,U(1)) 
    \simeq \H^{\ast+1}(\B G, \IZ)$. 
    The connectivity assumptions further imply 
    $\H^2(\B G,\IZ)=\H^3(\B G,\IZ)=0$ (see Section~\ref{sec:2Grps}).
    The exact sequence of Proposition~\ref{prop:exactSequence} 
    shortens to an isomorphism.
\end{proof}

Every half-braiding for $z \in Z(G)$ over $G_k$
restricts to a half-braiding of $z$ over $T_{J_k}$.
We thus get a restriction functor $r:\DZ G_k \to \DZ T_{J_k}$. 
To describe this restriction functor explicitly, we recall how the 
centre $Z(G)$ of 
a Lie group lifts to $\mathfrak{t}$ --- see Chapter 13 of~\cite{hall2015lie}
for proofs of the following facts. 
The centre $Z(G)$ includes into any maximal torus $T$ of $G$. 
It lifts to $\mathfrak{t}$ as the dual $\Phi^\vee$ of the 
\emph{root lattice} $\Phi \subset \mathfrak{t}^\ast$ of $G$.
For simply-connected $G$, the lattice $\Phi^\vee$ agrees with the
\emph{coweight lattice} (see e.g.~\cite{kirillov2005compact}),
and thus elements of $Z(G)$ lift to coweights in 
$\Phi^\vee \subset \mathfrak{t}$. The centre of a simply-connected 
compact Lie group may be computed
from the coweight and cocharacter lattice as $Z(G) = \Phi^\vee / \Pi$.

\begin{thm}
\label{thm:DrinfeldCentreOfGk}
    The Drinfel'd centre of $G_k$ is the 
    braided categorical group specified by $\pi_0 \DZ G_k = Z(G)$, 
    $\pi_1 \DZ G_k = U(1)$ and the quadratic form
    \begin{align*}
        q: Z(G) &\to U(1) \\
        z &\mapsto \exp\left(\tfrac{k}{2}I(\bar{z},\bar{z})\right),
    \end{align*}
    where $\bar{z}$ denotes any lift of $z \in Z(G)$ to $\mathfrak{t}$.
\end{thm}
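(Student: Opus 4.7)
The plan is to verify the three invariants $(\pi_0, \pi_1, q)$ separately. The groups $\pi_0$ and $\pi_1$ are already at hand: by Corollary~\ref{cor:pi0isZG}, $\pi_0 \DZ G_k = Z(G)$, and since $G_k \to G$ is a \emph{central} extension the conjugation action on $\pi_1 G_k = U(1)$ is trivial, so Lemma~\ref{lem:pi1ZCispi1C} gives $\pi_1 \DZ G_k = U(1)$. By Theorem~\ref{thm:EMquads}, the remaining work is to compute the quadratic form $q(z) = \beta_{z,z}$.

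To access the self-braiding I would restrict along the maximal 2-torus $T_{J_k} \into G_k$. This gives a braided monoidal functor $r: \DZ G_k \to \DZ T_{J_k}$ which is the identity on $\pi_1 = U(1)$ and therefore preserves self-braidings. Since every $z \in Z(G)$ factors through $T$, the braiding of $z$ can be computed on the torus side using the explicit formula of Proposition~\ref{prop:DZofTorus}. Picking a lift $\bar z \in \Phi^\vee \subset \mathfrak{t}$ of $z$, the image $r(z) \in \pi_0 \DZ T_{J_k} = (\mathfrak{t} \dirSum \Lambda)/\Pi$ is of the form $[\bar z, \lambda_z]$ for a uniquely determined (modulo $\Pi$) character $\lambda_z \in \Lambda$, and the self-braiding becomes $\lambda_z(\bar z) \exp(J_k(\bar z, \bar z))$.

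The key technical step is to identify $\lambda_z$ as the character with derivative $k I(\bar z, -):\mathfrak{t} \to \IR$; this character is well-defined because $\bar z$ lies in the coweight lattice $\Phi^\vee$ and $I$ is integral on the coroots $\Pi$. I expect this identification to be the main obstacle. Two natural routes present themselves: (i) a direct computation in a Segal-Mitchison cocycle representing $G_k$, working out the 2-cocycle $\omega(-,-\vert z)$ on $T$ (as in the proof of Proposition~\ref{prop:exactSequence}) and solving for a primitive $\gamma$ on $T$; or (ii) a symmetry argument leveraging Weyl-invariance, exploiting that the half-braiding on $G_k$ restricted to $T$ must be compatible with conjugation by $N(T) \subset G$. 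The uniqueness clause of Corollary~\ref{cor:pi0isZG} makes either route sufficient: it suffices to exhibit \emph{any} $\lambda_z$ that extends to a half-braiding over all of $G_k$, and the most economical approach is likely to write down a natural transformation whose character along $T$ is $k I(\bar z, -)$ and verify the hexagon equation against Ganter's explicit model.

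Granted $\lambda_z(\bar z) = \exp(k I(\bar z, \bar z))$ and the identity $J_k + J_k^t = -k \cdot I$, which forces $2 J_k(\bar z, \bar z) = -k I(\bar z, \bar z)$, the self-braiding collapses to
\begin{equation*}
    q(z) = \lambda_z(\bar z) \exp(J_k(\bar z, \bar z)) = \exp\!\left(k I(\bar z, \bar z) - \tfrac{k}{2} I(\bar z, \bar z)\right) = \exp\!\left(\tfrac{k}{2} I(\bar z, \bar z)\right),
\end{equation*}
matching the claimed quadratic form and completing the identification of $\DZ G_k$ as a braided categorical group.
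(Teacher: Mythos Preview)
Your overall strategy matches the paper's: use Corollary~\ref{cor:pi0isZG} and Lemma~\ref{lem:pi1ZCispi1C} for $\pi_0$ and $\pi_1$, then restrict along the maximal 2-torus $T_{J_k}\hookrightarrow G_k$ and read off the self-braiding from Proposition~\ref{prop:DZofTorus}. The final algebra collapsing $\lambda_z(\bar z)\exp(J_k(\bar z,\bar z))$ to $\exp(\tfrac{k}{2}I(\bar z,\bar z))$ is exactly the paper's.

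Where you diverge is at the step you flagged as the main obstacle: identifying $\lambda_z$. The paper avoids both of your proposed routes. It observes that $Z(G)$ is \emph{finite}, so the image of $r$ lands in the maximal compact sub-2-group $\operatorname{Cpct}\DZ T_{J_k}$ of Proposition~\ref{prop:maxCpctSubGp}. By Lemma~\ref{lem:maxCpctInjects} the forgetful map $u:\pi_0\operatorname{Cpct}\DZ T_{J_k}\hookrightarrow T$ is injective, and the commuting square
\[
\begin{tikzcd}
\pi_0\DZ G_k \arrow[r,"r"] \arrow[d,"\simeq"'] & \pi_0\operatorname{Cpct}\DZ T_{J_k} \arrow[d,hook,"u"] \\
Z(G) \arrow[r,hook] & T
\end{tikzcd}
\]
then forces $r$ uniquely. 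For $k\neq 0$ the compact subgroup is $\Lambda/\Pi$ with $u=\tau_\IR^{-1}$, so $r_0$ is the section $\Phi^\vee/\Pi\xrightarrow{\tau/\Pi}\Lambda/\Pi$, i.e.\ $\lambda_z=\tau(\bar z)=kI(\bar z,-)$ with no cocycle manipulation needed.

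Your route~(i) is workable in principle but the paper never writes down an explicit Segal--Mitchison cocycle for $G_k$ (only for $T_{J_k}$ via Ganter's model), so you would have to produce one. Your route~(ii) is more delicate than it looks: $J_k$ is not Weyl-invariant (only $I=-(J_k+J_k^t)$ is), so the Weyl group does not act by monoidal autoequivalences on $T_{J_k}$ in the naive way, and extracting a clean constraint on $\lambda_z$ requires care. The paper's finiteness-plus-injectivity trick sidesteps both issues.
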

\begin{proof}
    As $\pi_0 \DZ G_k = Z(G)$ is finite, the functor $r:\DZ G_k \to \DZ T_{J_k}$
    must land in the maximal compact subgroup $\operatorname{Cpct}{\DZ T_{J_k}}$,
    computed in Proposition~\ref{prop:maxCpctSubGp}.
    It fits into the commutative diagram
    \begin{equation*}
        \begin{tikzcd}
            \pi_0\DZ G_k \arrow[r, "r"] \arrow[d, "\simeq"] & 
            \pi_0 \operatorname{Cpct}{\DZ T_{J_k}} \arrow[d, hook, "u"] \\
            Z(G) \arrow[ru, "r_0" swap] \arrow[r, hook]                 & T,
        \end{tikzcd}
    \end{equation*}
    where the left hand map is an isomorphism by Corollary~\ref{cor:pi0isZG},
    and the right hand map $u$ is injective by Lemma~\ref{lem:maxCpctInjects}.
    The map $r_0:Z(G) \to \pi_0 \operatorname{Cpct}{\DZ T_{J_k}}$ is uniquely 
    fixed by the requirement that the bottom right triangle commute.
    
    For $k=0$, the map $u$ is the identity on $T$. The braiding is trivial on all of $T$, 
    and restricts to the trivial braiding on $Z(G)$.
    For $k \neq 0$, the maximal compact subgroup is $\Lambda/\Pi$, $u$ is equal to 
    $\tau_\IR^{-1}: \Lambda/\Pi \to \mathfrak{t}/\Pi = T$,
    and $r_0$ is the section 
    \begin{equation*}
        \begin{tikzcd}
            Z(G) =\Phi^\vee/\Pi \rar[hook,"\tau/\Pi"] 
            & \Lambda/\Pi.
        \end{tikzcd}
    \end{equation*}
    Denote a lift of $z \in Z(G)$ to $\Phi^\vee$ by $\bar{z}$.
    The quadratic form $\bar{q}$ computed in 
    Proposition~\ref{prop:maxCpctSubGp} pulls back to
    \begin{equation*}
        q(z) = \bar{q}(\tau\bar{z}) = \tau(\bar{z})(\bar{z}) 
        \exp J_k(\bar{z}, \bar{z})
        = \exp\left(k\cdot I(\bar{z},\bar{z})+J_k(\bar{z},\bar{z})\right)
        = \exp\left(\tfrac{k}{2}I(\bar{z},\bar{z})\right).
        \qedhere
    \end{equation*}
\end{proof}

Each lift $\bar{z}$ in this formula is a coweight. 
The norm of a coweight may be computed as the norm of 
the corresponding weight of the \emph{dual root datum}.
In the realm of compact simple simply-connected Lie groups, 
dualising root data simply exchanges the odd-dimensional Spin groups 
$B_n = \Spin(2n+1)$ and the symplectic Lie groups $C_n = \mathrm{Sp}(2n)$. 
All other groups are fixed by this operation.
The norm is computed using the inner product on the dual of the Cartan 
of the dual root datum.
It is normalised such that short roots have length squared 2.
The values of the length squared of weights under this product 
can be read off from the explicit expansion 
for weights in terms of roots given in~\cite{bourbaki1994lie}.
In Table~\ref{tab:resultsTable}, we list the results of
this computation. 

\begin{example}
\label{ex:DrinfeldCentreOfSU2levelk}
    The Drinfel'd centre of ${\SU(2)}_k$ is given by (see Notation~\ref{not:braidedZ2})
    \begin{equation*}
        \DZ {\SU(2)}_k = 
        \begin{cases}
            {\Vec_{\IZ/2}}^\times & k \equiv 0 \mod 4 \\
            {\Semi}^\times & k \equiv 1 \mod 4 \\
            {\sVec}^\times & k \equiv 2 \mod 4 \\
            {\bSemi}^\times & k \equiv 3 \mod 4.
        \end{cases}
    \end{equation*}
\end{example}

\subsection*{Non-simply-connected Lie groups}
Any compact connected Lie group $G$ fits into a short exact sequence
\begin{equation*}
    \begin{tikzcd}
        Z \rar[hook] & \tilde{G} \rar[two heads, "\pi"] & G,
    \end{tikzcd}
\end{equation*}
where the middle term is a product $\tilde{G} = T \times \Pi_i G_i$ of
a torus $T$ with
simply-connected simple Lie groups $G_i$,
and $Z \into Z(\tilde{G})$ is a finite central 
subgroup~\cite[Cor V.5.31]{mimura1991topology}.
The degree 4 cohomology of $\tilde{G}$ decomposes as
\begin{equation*}
    \H^4(\B \tilde{G},\IZ) = \H^4(\B T,\IZ) \dirSum \Pi_i \H^4(\B G_i,\IZ).
\end{equation*}
Hence any degree 4 cohomology class of $\tilde{G}$ can be represented by
a cocycle which is a product of cocycles pulled back from the individual factors. 

\begin{lemma}
\label{lem:DZofProduct}
    Let $H, H^\prime$ be a pair of Lie groups and $\omega, \omega^\prime$ cocycles 
    representing associators. Denote by $\bar{\omega} \define p_H^\ast \omega + 
    p_{H^\prime}^\ast \omega^\prime$ the product cocycle on $H \times H^\prime$.
    The centre $\DZ {(H \times H^\prime)}_{\bar{\omega}}$ is the braided abelian
    group $(\pi_0\DZ H_\omega \times \pi_0\DZ H^\prime_{\omega^\prime}, 
    \bar{q})$, with quadratic form
    \begin{equation*}
        \bar{q}: \pi_0 \DZ H_\omega \times 
        \pi_0 \DZ H^\prime_{\omega^\prime} \to U(1) =
        \pi_1 \DZ {(H \times H^\prime)}_{\bar{\omega}}
    \end{equation*}
    given by the pointwise product of the quadratic forms on $\DZ H_\omega$ and 
    $\DZ H^\prime_{\omega^\prime}$.
\end{lemma}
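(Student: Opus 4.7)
The plan is to reduce the lemma to an explicit cochain computation, leveraging the description of half-braidings from the proof of Proposition~\ref{prop:exactSequence}: a half-braiding on $g \in H_\omega$ is a smooth $U(1)$-valued 1-cochain $\gamma$ satisfying $d\gamma = \omega(-,-|g)$. The key initial observation is that because $\omega(x,y|g) \define \omega(x,g,y)/(\omega(g,x,y)\omega(x,y,g))$ is multiplicative in $\omega$, the additivity $\bar\omega = p_H^\ast\omega + p_{H'}^\ast\omega'$ yields
\begin{equation*}
    \bar{\omega}((x_1,x_2),(y_1,y_2)|(g_1,g_2)) = \omega(x_1,y_1|g_1) \cdot \omega'(x_2,y_2|g_2).
\end{equation*}
Combined with $Z(H \times H') = Z(H) \times Z(H')$, this decouples the obstruction to existence of a half-braiding into the corresponding obstructions for each factor.

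Next I would show that half-braidings themselves decompose. Working on a simplicial cover of $\B(H \times H')$ built as the product of good simplicial covers for $\B H$ and $\B H'$, set $\gamma_1(x_1) \define \gamma(x_1, e)$ and $\gamma_2(x_2) \define \gamma(e, x_2)$. By normalisation of cocycles (so any $\omega$- or $\omega'$-term containing an identity entry is trivial), evaluating $d\gamma = \bar\omega(-,-|(g_1,g_2))$ on pairs supported in a single factor yields $d\gamma_i = \omega_i(-,-|g_i)$, so each $\gamma_i$ is a half-braiding for $g_i$; evaluating instead on $((x_1, e),(e, x_2))$ gives the product formula $\gamma(x_1, x_2) = \gamma_1(x_1)\gamma_2(x_2)$, so that $\gamma$ is reconstructed from its two restrictions. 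The converse, that the pointwise product of any pair of half-braidings is a half-braiding for the pair, is immediate from the multiplication of coboundaries matching the decomposed obstruction cocycle. Since the tensor product in $(H \times H')_{\bar\omega}$ is coordinate-wise on objects, the resulting bijection of sets of smooth half-braidings is natural in the object and is a group homomorphism on isomorphism classes.

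To descend to $\pi_0$, I observe that two half-braidings on the same object differ by a smooth character, and $\Hom(H \times H', U(1))$ decomposes as $\Hom(H, U(1)) \times \Hom(H', U(1))$ (characters factor through abelianisations, and abelianisation commutes with finite products). This yields $\pi_0 \DZ (H \times H')_{\bar\omega} \iso \pi_0 \DZ H_\omega \times \pi_0 \DZ H'_{\omega'}$; Lemma~\ref{lem:pi1ZCispi1C} handles $\pi_1 \DZ (H \times H')_{\bar\omega} = U(1)$; and the self-braiding of a centre piece computes as
\begin{equation*}
    \bar q(g_1,g_2) = \gamma(g_1, g_2) = \gamma_1(g_1)\,\gamma_2(g_2) = q(g_1)\,q'(g_2),
\end{equation*}
verifying the quadratic form is the pointwise product of the two factor quadratic forms.

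The main obstacle is not conceptual but bookkeeping: ensuring that the restriction and product decompositions above can be performed simultaneously on a common simplicial cover at the Segal--Mitchison cochain level. The choice of product simplicial cover $(Y \times Y')_\bullet \onto \B(H \times H')$ makes all the cochain identities hold on the nose, and smoothness of the constructed half-braidings is automatic, since they arise via restriction and pointwise multiplication of smooth functions.
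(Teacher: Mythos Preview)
Your proposal is correct and follows essentially the same approach as the paper: reduce to the cochain equation $d\gamma = \bar{\omega}(-,-|g)$ from Proposition~\ref{prop:exactSequence}, observe that the obstruction cocycle factors as a product over the two groups, and then split $\gamma$ via its restrictions $\gamma_1(x_1)=\gamma(x_1,e)$ and $\gamma_2(x_2)=\gamma(e,x_2)$ to identify centre pieces and the quadratic form coordinatewise. Your version spells out a few points the paper leaves implicit (the character decomposition for passing to $\pi_0$, the choice of product simplicial cover), but the argument is the same.
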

\begin{proof}
    We prove this in the discrete setup. The argument is carried over to the 
    smooth setting exactly as in the proof of Proposition~\ref{prop:exactSequence}.
    A centre piece for $(H \times H^\prime)_{\bar{\omega}}$ is 
    a tuple of elements $(h,h^\prime)$, equipped with a half-braiding
    $\gamma:H \times H^\prime \to U(1)$ satisfying
    the hexagon equation
    \begin{equation*}
        d \gamma = \bar{\omega}\left(-,-|(h,h^\prime)\right).
    \end{equation*}
    We will now show that every such half-braiding is a product of half-braidings
    in $H_\omega$ and $H^\prime_{\omega^\prime}$ and vice versa.
    The cocycle $\bar{\omega}$ splits as a product of $\omega$ and $\omega^\prime$, 
    and thus so does $\bar{\omega}(-,-|(h,h^\prime))$. The hexagon 
    equation implies that the half-braiding splits as a product
    \begin{equation*}
        \gamma\left((x,x^\prime)\right) = 
        \gamma\left((x,e)\right) \gamma\left((e,x^\prime)\right)
        \eqqcolon \gamma_h(x) \gamma_{h^\prime}(x^\prime).
    \end{equation*}
    The hexagon equation for $\gamma$ is now equivalent to the hexagon
    equations $d\gamma_h=\omega(-,-|h)$ and
    $d\gamma_{h^\prime}=\omega^\prime(-,-|h^\prime)$.
    Hence $\pi_0 \DZ {(H \times H^\prime)}_{\bar{\omega}} = 
    \pi_0\DZ H_\omega \times \pi_0\DZ H^\prime_{\omega^\prime}$ is 
    indeed the product. 
    The quadratic form is given by the pointwise product
    because the half-braidings are. 
\end{proof}

The central extension $G_k$ of $G$ corresponding to $k \in \H^4(\B G,\IZ)$
pulls back to an extension $\tilde{G}_{\pi^\ast k}$, where $\pi^\ast k = 
(J,\{k_i\}) \in \H^4(\B T,\IZ) \dirSum \Pi_i \H^4(\B G_i,\IZ)$.
Its Drinfel'd centre $\DZ \tilde{G}_{\pi^\ast k}$ is the 
braided 2-group with 
\begin{equation*}
    \pi_0\DZ\tilde{G}_{\left(J,\{k_i\}\right)}=\DZ(T \times \Pi_i G_{i,k_i})=
    (\Lambda_T \dirSum \mathfrak{t}_T)/\Pi_T \times \Pi_i Z(G_{i,k_i}),
\end{equation*}
$\pi_1 \DZ \tilde{G}_{(J,\{k_i\})}= U(1)$, and quadratic form
given by the pointwise product $q = q_J \times \Pi_i q_{k_i}$.
The finite central subgroup $Z = \ker{\pi}$ lifts 
uniquely to $\pi_0\DZ \tilde{G}$: it must land in the maximal compact
subgroup, which injects into $\tilde{G}$ as
$\pi_0 \operatorname{Cpct}{\DZ T_J} \times \Pi_i Z(G_i) \into 
T \times \Pi_i Z(G_i)$
by Lemma~\ref{lem:maxCpctInjects}.

Choose a maximal torus $\tilde{T} \into \tilde{G}$, then
$\tilde{T}/Z \into \tilde{G}/Z = G$ is a maximal torus for $G$.
The Lie algebras of these two tori may both be identified with 
their common universal cover, which we denote $\tilde{\mathfrak{t}}$.
We write $\tilde{\Pi}$ for the fundamental group of $\tilde{T}$, 
embedded as a lattice in $\tilde{\mathfrak{t}}$.
The fundamental group $\tilde{\Pi}_Z$ of $\tilde{T}/Z$ is an extension
\begin{equation*}
    \tilde{\Pi} \into \tilde{\Pi}_Z \onto Z.
\end{equation*}
Considered as a lattice in $\tilde{\mathfrak{t}}$, it is the 
preimage of $Z \subset \tilde{T}$ in $\mathfrak{\tilde{t}}$:
\begin{equation*}
    \begin{tikzcd}
    \tilde{\Pi}_Z \arrow[d,two heads] \arrow[r,hook] & 
    \tilde{\mathfrak{t}} \arrow[d,two heads] \\
    Z \arrow[r,hook] & \tilde{T}
    \arrow[lu, "\lrcorner" very near end, phantom].
    \end{tikzcd}
\end{equation*}
The Weyl groups of $\tilde{G}$ and $G$ under these choices of maximal
tori are canonically isomorphic
and the Weyl-actions on $\tilde{\mathfrak{t}}$ agree.

Recall that we identified $\H^4(\B \tilde{G}, \IZ)$ with 
the group of symmetric bilinear forms $I$ on $\tilde{\mathfrak{t}}$
which are Weyl-invariant and satisfy $I(\tilde{\pi},\tilde{\pi}) \in 2\IZ$
for all $\tilde{\pi} \in \tilde{\Pi}$.
Under this identification with bilinear forms, the map 
$\pi^\ast: \H^4(\B G,\IZ) \to \H^4(\B \tilde{G},\IZ)$
restricts the bilinear form along $\pi$ 
(see also~\cite{henriques2017classification}).
The quotient map $\pi$ induces the identity on the Lie algebra 
$\tilde{\mathfrak{t}}$, and preserves the Weyl-action.
Hence, the image of the map $\pi^\ast:\H^4(\B G, \IZ) \to \H^4(\B \tilde{G}, \IZ)$ 
consists precisely of those symmetric bilinear forms on $\mathfrak{t}$ that 
satisfy the even integrality condition not only for $\tilde{\Pi}$, but for 
$\tilde{\Pi}_Z$. 

Now consider the unique lift of $Z$ to $\pi_0 \DZ \tilde{G}_{\pi^\ast k}$.
The quadratic form on $\pi_0 \DZ \tilde{G}_{\pi^\ast k}$ restricts to
\begin{align*}
    q: Z &\to U(1) \\
       z &\mapsto \exp \tfrac{1}{2}I(\bar{z},\bar{z}).
\end{align*}
It vanishes on $z \in Z$ if and only if $I(\bar{z},\bar{z}) \in 2\IZ$ for all 
lifts $\bar{z} \in \tilde{\mathfrak{t}}$. 
As discussed above, the preimage of $Z$ under $\pi$ is precisely $\tilde{\Pi}_Z$, 
so this is equivalent to the integrality condition for $I \in \H^4(\B G, \IZ)$ and
we arrive at the following:
\begin{lemma}
\label{lem:MapOnH4AndQuadraticForms}
    Let $Z \into \tilde{G} \onto G$ be as above.
    Then $\tilde{k} \in \H^4(\B \tilde{G}, \IZ)$ is in the 
    image of the map $\H^4(\B G, \IZ) \into \H^4(\B \tilde{G}, \IZ)$
    if and only if the quadratic form on $\pi_0 \DZ \tilde{G}_{\tilde{k}}$
    vanishes when restricted along the unique lift 
    $Z \into \pi_0 \DZ \tilde{G}_{\tilde{k}}$.
\end{lemma}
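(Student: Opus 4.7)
My plan is to read the lemma as an immediate consequence of two things that have essentially been set up: the description of $\H^4(\B\tilde{G},\IZ)$ and $\H^4(\B G,\IZ)$ as lattices of Weyl-invariant symmetric bilinear forms on the common Cartan $\tilde{\mathfrak{t}}$, and the explicit formula for the quadratic form $q$ on the canonical lift of $Z$ inside $\pi_0\DZ\tilde{G}_{\tilde{k}}$. The proof is therefore a matter of matching two integrality conditions.

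First I would spell out the identification. Writing $\tilde{k}$ as a bilinear form $I$ on $\tilde{\mathfrak{t}}$, the map $\pi^\ast:\H^4(\B G,\IZ)\to\H^4(\B\tilde{G},\IZ)$ acts as the identity on the underlying bilinear form (since $\pi$ induces the identity on Lie algebras and preserves the Weyl action), so membership of $\tilde{k}$ in the image of $\pi^\ast$ is exactly the condition that $I(\pi,\pi)\in 2\IZ$ for all $\pi\in\tilde{\Pi}_Z$, strictly stronger than the condition $I(\tilde{\pi},\tilde{\pi})\in 2\IZ$ for $\tilde{\pi}\in\tilde{\Pi}$ that is guaranteed by $\tilde{k}\in\H^4(\B\tilde{G},\IZ)$.

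Second I would pin down the restriction of $q$ along $Z\hookrightarrow\pi_0\DZ\tilde{G}_{\tilde{k}}$. The lift of $Z$ factors through the maximal compact sub-2-group by Lemma~\ref{lem:maxCpctInjects} combined with the simply-connected factors from Corollary~\ref{cor:pi0isZG}, and the quadratic form on $\pi_0\DZ\tilde{G}_{\tilde{k}}$ is the pointwise product of the quadratic forms on the torus factor and on the simple factors by Lemma~\ref{lem:DZofProduct}. Applying Theorem~\ref{thm:DrinfeldCentreOfGk} on the simple factors and Proposition~\ref{prop:maxCpctSubGp} on the torus factor, both contributions assemble into the single expression
\begin{equation*}
    q(z)=\exp\tfrac{1}{2}I(\bar{z},\bar{z}),
\end{equation*}
where $\bar{z}\in\tilde{\mathfrak{t}}$ is any lift of $z\in Z\subset\tilde{T}$. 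Since the preimage of $Z$ under $\exp:\tilde{\mathfrak{t}}\twoheadrightarrow\tilde{T}$ is by construction $\tilde{\Pi}_Z$, the set of lifts of elements of $Z$ is exactly $\tilde{\Pi}_Z$.

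Finally I would combine the two: the restriction $q|_Z$ is trivial precisely when $I(\bar{z},\bar{z})\in 2\IZ$ for every $\bar{z}\in\tilde{\Pi}_Z$, which by the first step is exactly the condition that $\tilde{k}$ lies in the image of $\pi^\ast$. The only step that requires care, rather than bookkeeping, is checking that the formula $q(z)=\exp\tfrac{1}{2}I(\bar{z},\bar{z})$ really does hold with $I$ the full bilinear form assembling $(J,\{k_i\})$; this amounts to verifying that the product of the factor-wise quadratic forms $q_J\cdot\prod_i q_{k_i}$ agrees with $\exp\tfrac{1}{2}I$ on lifts inside $\tilde{\Pi}_Z$, and this is a direct calculation using the formulas already recorded on each factor together with the fact that $I$ itself splits as a sum over the factors.
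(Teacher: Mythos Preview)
Your proposal is correct and follows essentially the same argument as the paper: both identify the image of $\pi^\ast$ as those bilinear forms satisfying the even-integrality condition on $\tilde{\Pi}_Z$, record the formula $q(z)=\exp\tfrac{1}{2}I(\bar z,\bar z)$ on the lift of $Z$, and observe that triviality of $q|_Z$ is exactly this integrality condition. If anything you are slightly more explicit than the paper in citing Lemma~\ref{lem:DZofProduct}, Theorem~\ref{thm:DrinfeldCentreOfGk} and Proposition~\ref{prop:maxCpctSubGp} to justify the assembled formula for $q$.
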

A description of the maps $\H^4(\B G, \IZ) \into \H^4(\B \tilde{G}, \IZ)$ 
for quotients $G=\tilde{G}/Z$ of simple 
simply-connected Lie groups may be found 
in~\cite[Table 1]{gawedzki2009polyakov}.

Consider the closed subgroup
\begin{equation*}
    Z^\bot \define \left\{ x \in \pi_0 \DZ \tilde{G} \mid
    q(x+z)=q(x), \forall z \in Z \right\}
\end{equation*}
of objects in $\DZ \tilde{G}$ that braid trivially with 
every element of $Z$.
By abuse of notation, we also denote by this the braided smooth sub-2-group of $\DZ \tilde{G}$ 
obtained by pulling back along the inclusion $Z^\bot \into \pi_0\DZ \tilde{G}$.
By Lemma~\ref{lem:MapOnH4AndQuadraticForms}, $q$ vanishes on $Z$, so in particular
$Z \subset Z^\bot$.
The quotient $Z^\bot/Z$ inherits a smooth structure, because $Z^\bot$ is an embedded 
Lie subgroup of $\pi_0 \DZ \tilde{G}$, and $Z$ is a normal subgroup of $Z^\bot$.
\begin{thm}
\label{thm:nonSimplyConn}
    The Drinfel'd centre of $G_k$ is the smooth braided categorical group
    \begin{equation*}
        \DZ G_k = (Z^\bot / Z, q\restriction_{Z^\bot}).
    \end{equation*}
\end{thm}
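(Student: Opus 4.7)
The idea is to identify $\DZ G_k$ as a subquotient of $\DZ \tilde G_{\pi^\ast k}$: the subgroup of objects that braid trivially with the unique lift of the finite central subgroup $Z$, taken modulo $Z$ itself. The starting point is the short exact sequence of smooth 2-groups
\[
Z \into \tilde G_{\pi^\ast k} \onto G_k,
\]
obtained by pulling back the central extension along $\pi\colon \tilde G \onto G$; here $Z$ embeds via the unique lift to $\tilde G_{\pi^\ast k}$ recorded in the discussion above.

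The quotient $\tilde G_{\pi^\ast k} \onto G_k$ induces a braided monoidal pullback 2-functor $\pi^\ast\colon \DZ G_k \to \DZ \tilde G_{\pi^\ast k}$, sending a centre piece $(y,\gamma)$ to a lift $\tilde y$ equipped with the pulled-back half-braiding. Since elements of $Z$ become the monoidal unit of $G_k$, any pulled-back half-braiding $\tilde \gamma$ must evaluate trivially on $Z$. Under Theorem~\ref{thm:EMquads}, this braiding with $z \in Z$ equals the associated bilinear form $\sigma_q(\tilde y, z) = q(\tilde y + z)/\bigl(q(\tilde y)\,q(z)\bigr)$, and Lemma~\ref{lem:MapOnH4AndQuadraticForms} ensures $q|_Z \equiv 1$. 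Hence triviality on $Z$ is equivalent to $q(\tilde y + z) = q(\tilde y)$ for all $z \in Z$, i.e.\ $\tilde y \in Z^\bot$.

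Conversely, I would show that every centre piece $(\tilde y, \tilde \gamma) \in \DZ \tilde G_{\pi^\ast k}$ with $\tilde y \in Z^\bot$ descends to a centre piece of $G_k$. By the preceding paragraph, $\tilde \gamma$ is trivial on the lift of $Z$; combined with the centrality of $Z$, this makes $\tilde \gamma$ a $Z$-equivariant natural isomorphism, so it factors through $G_k$. I would formalise this as a descent argument at the level of Segal--Mitchison cocycles on a good simplicial cover of $\B G$ lifted through $\B \tilde G$, in parallel with the cocycle manipulations in the proof of Proposition~\ref{prop:exactSequence}. Two lifts of the same centre piece of $G_k$ differ by multiplication by an element of $Z$ on objects, so the fibres of $\pi^\ast$ on isomorphism classes of objects are exactly the $Z$-cosets. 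Combining these observations yields $\pi_0 \DZ G_k = Z^\bot/Z$ as a Lie group ($Z^\bot$ is cut out of $\pi_0 \DZ \tilde G_{\pi^\ast k}$ by finitely many character equations and $Z$ is a finite normal subgroup); Lemma~\ref{lem:pi1ZCispi1C} gives $\pi_1 \DZ G_k = U(1)$ because the conjugation action is trivial, and $q$ descends to $Z^\bot/Z$ precisely by the defining property of $Z^\bot$, encoding the braided structure of $\DZ G_k$.

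The delicate step is the descent argument: showing that a half-braiding on $\tilde G_{\pi^\ast k}$ which is trivial on $Z$ genuinely extends to a half-braiding on the quotient 2-group $G_k$. One must verify that the hexagon axiom survives the descent, that no obstruction appears in the Segal--Mitchison cohomology of $G$ versus $\tilde G$, and that the smooth structure on the descended centre pieces admits a common refining cover — exactly the cover-theoretic care exercised in the proof of Proposition~\ref{prop:exactSequence}.
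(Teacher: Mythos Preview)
Your approach is essentially the paper's: build mutually inverse braided functors between $\DZ G_k$ and the subquotient $Z^\bot/Z$ of $\DZ\tilde G_{\pi^\ast k}$ by pulling back half-braidings along $\pi$ and then descending, reducing descent to $Z$-equivariance of the half-braiding cochain with respect to a $Z$-equivariant choice of associator cocycle $\pi^\ast\omega$.

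One step in your forward direction is imprecise. You claim that the pulled-back half-braiding evaluated at $z$ equals $\sigma_q(\tilde y,z)$ via Theorem~\ref{thm:EMquads}, but $\tilde\gamma(z)=\beta_{\tilde y,z}$ need not equal the symmetric form $\sigma_q(\tilde y,z)=\beta_{\tilde y,z}\beta_{z,\tilde y}$; Eilenberg--Mac Lane recovers only the braided \emph{equivalence class} from $q$, not individual braiding values. The paper avoids this by observing that $\pi^\ast\gamma$ is simultaneously the (unique) half-braiding for \emph{every} lift $\bar a+z$ of $a$, whence $q(\bar a+z)=\pi^\ast\gamma(\bar a+z)=\gamma(a)=q(\bar a)$ directly. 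The same observation streamlines the inverse: with $Z$-equivariant $\pi^\ast\omega$, the hexagon equation forces the ratio $\gamma(\bar x+z)/\gamma(\bar x)$ to be independent of $\bar x$, so evaluating at $\bar x=\bar a$ turns the hypothesis $\bar a\in Z^\bot$ into $Z$-equivariance of $\gamma$ on all of $\tilde G$, and $\gamma$ descends as a Segal--Mitchison cochain along the simplicial cover $\B\tilde G_\bullet \onto \B G_\bullet$.
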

\begin{proof}
    Let $(a, \gamma) \in \DZ G_k$ be a centre piece for $G_k$. Pick a lift 
    $\bar{a} \in Z(\tilde{G})$
    of $a \in Z(G)$ against $\pi:\tilde{G} \to G$.
    Pulling back the half-braiding, we obtain a new centre piece
    $(\bar{a},\pi^\ast \gamma) \in \DZ \tilde{G}_{\pi^\ast k}$.
    Indeed, the hexagon equation $d\gamma = \omega(-,-|a)$ is preserved under pullback.
    The lifts of $a$ form a $Z$-torsor, and the induced quadratic form $q$ on 
    the group of lifts is invariant under the $Z$-action:
    $q(\bar{a} + z)=\pi^\ast\gamma(\bar{a} + z)=\gamma(a)=q(\bar{a})$ for all $z \in Z$.
    This construction extends to a smooth functor $\DZ G_k \to Z^\bot/Z$,
    which preserves the quadratic form.
    We now build the inverse functor.
    
    The pullback cocycle $\pi^\ast \omega$ representing the associator on 
    $\tilde{G}_{\pi^\ast k}$ can be chosen to be equivariant under translation 
    by $Z$ in all variables. 
    The hexagon equation implies that for any centre piece 
    $(\bar{a}, \gamma) \in \DZ \tilde{G}_{\pi^\ast k}$, 
    $z \in Z$, and $\bar{x},\bar{x}' \in \tilde{G}$,
    \begin{equation*}
        \gamma(\bar{x} + z) / \gamma(\bar{x}) = \gamma(\bar{x}' + z) / \gamma(\bar{x}').
    \end{equation*}
    The projection $\pi:\tilde{G} \to G$ defines a simplicial cover 
    $\pi_\bullet: \B \tilde{G}_\bullet \onto \B G_\bullet$, and $\gamma$ is a Segal-Mitchison
    cochain for $G$ defined with respect to this cover. 
    One may check that $\gamma$ satisfies the 
    cocycle condition precisely if it is $Z$-equivariant:
    $\gamma(\bar{x} + z)=\gamma(\bar{x})$ for all $\bar{x} \in \tilde{G}, z \in Z$. 
    By the above equation, it is in fact enough to check it for a single 
    $\bar{x} \in \tilde{G}$.
    The $Z$-equivariance of $\pi^\ast\omega$ implies
    that any $Z$-translate $\bar{a} + z$ of $\bar{a}$ admits $\gamma$ as a half-braiding.
    The quadratic form $q$ sends $(\bar{a},\gamma) \mapsto \gamma(\bar{a})$ 
    and
    $(\bar{a} + z,\gamma) \mapsto \gamma(\bar{a} + z)$. But $(\bar{a}, \gamma) \in Z^\bot$ 
    implies $\gamma(\bar{a} + z)=\gamma(\bar{a})$. Hence, the functor $Z^\bot \to \DZ G_k$ 
    descends to a smooth functor $Z^\bot/Z \to \DZ G_k$, which is manifestly inverse 
    to the construction in the first half of the proof.
\end{proof}

\begin{rmk}
\label{rmk:alternativeProofOfQuotientRule}
The above calculation admits a more abstract description in the setting of additive
monoidal categories. 
Denote by $\cat{C}^\dirSum$ the direct-sum completion 
of a (smooth) monoidal category $\cat{C}$.
Then $G_k^\dirSum$ is a monoidal module category over 
$\tilde{G}_{\pi^\ast k}^\dirSum$, obtained by taking 
modules over the algebra corresponding to 
$Z \subset \tilde{G}$.
The category of modules over an algebra receives a monoidal structure 
precisely when the algebra is commutative,
which happens here if and only if the quadratic form vanishes on $Z$.
The Drinfel'd centre of the category $\cat{C}_A$ of modules over a
commutative algebra $A \in \DZ \cat{C}$ was computed 
in~\cite{schauenburg2001monoidal} 
(under completeness conditions which
are satisfied here): it is the category 
$\DZ \cat{C}_A = \mathrm{loc}_A\DZ \cat{C}$ of \emph{local} $A$-modules
in $\DZ \cat{C}$. 
In the case at hand, 
$\DZ G_k^\dirSum = \mathrm{loc}_Z\DZ \tilde{G}_{\pi^\ast k}^\dirSum=
(Z^\bot/Z,q\restriction_{Z^\bot})^\dirSum$. 
The statement about 2-groups
can be recovered by restricting to simple objects.
\end{rmk}

\begin{example}
\label{ex:DZofSO4}
    Let $G = \SO(4)$. It is the quotient $\Spin(4)/Z$,
    where $Z=\IZ/2\into \IZ/2 \times \IZ/2 = Z(\Spin(4))$ is the
    diagonal copy of $\IZ/2$ in the centre (under the decomposition
    $\Spin(4)=\SU(2) \times \SU(2)$).
    We use Theorem~\ref{thm:nonSimplyConn} to compute the Drinfel'd centre 
    of $G_{\underline k}$, for $\underline k \in \H^4(\B \SO(4),\IZ) \subset
    \H^4(\B \Spin(4),\IZ)$. The isomorphism 
    $\Spin(4)=\SU(2) \times \SU(2)$ allows us to identify the 
    relevant cohomology group as freely generated by the second Chern class 
    in each factor: 
    $\H^4(\B \Spin(4),\IZ) = \H^4(\B \SU(2) \times \B\SU(2),\IZ) 
    = \IZ\langle c_l, c_r \rangle $. 
    We write the cohomology class of the associator in this basis as 
    $\underline k = k_l \cdot c_l + k_r \cdot c_r$.
    By Theorem~\ref{thm:DrinfeldCentreOfGk}, the Drinfel'd centre of $\Spin(4)$ at
    level $\underline k$ is given by 
    \begin{equation*}
    \DZ\Spin(4)_{\underline k}= \left( \IZ/2 \times \IZ/2, q:
    \begin{array}{rl}
            (-1,1)  &\mapsto \exp \tfrac{k_l}{4} \\
            (1,-1)  &\mapsto \exp \tfrac{k_r}{4} \\
            (-1,-1) &\mapsto \exp \tfrac{k_l+k_r}{4}
    \end{array}
    \right).
    \end{equation*}
    The quadratic form $q$ is trivial on $Z$ iff 
    $k_l+k_r \equiv 0 \mod 4$.
    This equation cuts out the subspace of cohomology classes in
    the image of $\H^4(\B \SO(4),\IZ) \into \H^4(\B \Spin(4),\IZ)$.
    The Drinfel'd centre of $G_{k_l,k_r}$ is generated by 
    $[(-1,1)]=[(1,-1)] \in Z(\Spin(4))/Z$ if 
    $(-1,1) \in Z^\bot$, otherwise it is trivial. 
    This is equivalent to the condition that $k_l$ be even.
    In conclusion (see Notation~\ref{not:braidedZ2}),
    \begin{equation*}
        \DZ \SO(4)_{k_l,k_r \in 4\IZ-k_l} = 
        \begin{cases}
            \Vec_{\IZ/2}^\times & k_l \equiv 0 \mod 4 \\
            \sVec^\times & k_l \equiv 2 \mod 4 \\
            \Vec^\times & \text{else}.
        \end{cases}
    \end{equation*}
    The computations in~\cite{vcadek19984} 
    relate the classes $c_l$ and $c_r$ to the more familiar
    first Pontryagin class $p_1$ and Euler class $\chi$, 
    which generate $\H^4(\B \SO(4), \IZ)$.
    This allows the rephrasing of the above result that we 
    gave in the introduction.
\end{example}

\subsection*{Comparison to loop group representations}
We end by comparing $\DZ G_k$ 
(as computed by Theorems \ref{thm:DrinfeldCentreOfGk} and \ref{thm:nonSimplyConn})
to ${(\Rep^k LG)}^\times$, the maximal sub-2-group
of the category of positive energy representations of the loop group.

Let $G$ be a semisimple compact connected Lie group, ie.\ a Lie group 
of the form $G=(\Pi_i G_i) / Z$, where all $G_i$ are compact simple simply-connected 
Lie groups and $Z$ is a finite central subgroup of the product.
Let $k \in \H^4(\B G, \IZ)$ be a cohomology class corresponding 
to a positive-definite bilinear form $I$ (under the identification made
in Section~\ref{sec:stringGrps}).
Below, we show that the braided 2-groups computed in the above subsections 
are equal to ${(\Rep^k LG)}^\times$,
as long as we explicitly exclude any factors of  $E_8$ at level $k=2$.

We use the model of $\Rep^k LG$ as representations
of a unitary vertex operator algebra: $\Rep V_{G,k}$ ---
see~\cite[Eq (2)]{henriques2017classification} for a definition
of $V_{G,k}$ and its modules
and~\cite[Sect 3]{henriques2017chern} for a discussion of alternative 
models of $\Rep^k LG$ and their relation.
More than just tensor categories, these categories are
\emph{unitary modular tensor categories}~\cite{gui2019unitarity}.
When we write ${(\Rep^k LG)}^\times$, we mean the maximal \emph{unitary} 
sub-2-group: we only retain $\tensor$-invertible objects and unitary 
morphisms. This way, we obtain a 2-group with 
\[
\pi_1 {(\Rep^k LG)}^\times=U(1)
\]
(rather than $\IC^\times$).
By Lemma~\ref{lem:pi1ZCispi1C}, the fundamental group
$\pi_1 \DZ G_k = U(1)$ agrees with that of ${(\Rep^k LG)}^\times$.
It remains to compare $\pi_0$ and the induced quadratic form.
We will need the following result.
\begin{lem}
\label{lem:braidingIsTwist}
    Let $X$ be an invertible object in a 
    unitary modular tensor category. Its self-braiding $\beta_{X,X}$ 
    (considered as a complex number via the canonical isomorphism 
    $\End(X \tensor X) = \IC$ sending $\id_{X \tensor X} \mapsto 1$)
    is equal to its \emph{ribbon twist} $\theta_X$.
\end{lem}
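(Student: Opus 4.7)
My plan is to reduce the identity $\theta_X = \beta_{X,X}$ to a standard graphical calculus identity in a ribbon category, together with the fact that invertible objects in a unitary modular tensor category have quantum dimension $1$. The graphical identity in question expresses the ribbon twist as a partial trace of the self-braiding, of the shape
\begin{equation*}
    \theta_X \cdot \id_X \;=\; (\id_X \tensor \tilde{\ev}_X) \comp (\beta_{X,X} \tensor \id_{X^*}) \comp (\id_X \tensor \coev_X),
\end{equation*}
where $\tilde{\ev}_X: X \tensor X^* \to \ONE$ is the evaluation determined by the pivotal structure. This identity is standard in ribbon categories (via isotopy of the $2\pi$ twist of a labelled strand into a crossing closed off by a duality loop) and may be cited from any standard reference, e.g.\ Turaev or Bakalov--Kirillov.

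With this in hand the computation is essentially formal. Since $X$ is invertible, $X \tensor X$ is simple, so $\beta_{X,X}$ is a scalar multiple of the identity; moreover, under the canonical isomorphism $\End(X \tensor X) \iso \IC$ that scalar is precisely the number $\beta_{X,X}$ of the statement. Pulling this scalar out of the composition, the remaining endomorphism of $X$ reduces to $\dim(X) \cdot \id_X$ by definition of the quantum dimension.

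It remains to verify that $\dim(X) = 1$. Multiplicativity of quantum dimension applied to $X \tensor X^{-1} \iso \ONE$ gives $\dim(X) \dim(X^{-1}) = 1$, and the pivotal structure forces $\dim(X) = \dim(X^*) = \dim(X^{-1})$, hence $\dim(X)^2 = 1$. In a unitary modular tensor category quantum dimensions are strictly positive real numbers, so $\dim(X) = 1$. Combining, $\theta_X = \beta_{X,X} \cdot \dim(X) = \beta_{X,X}$.

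The one technical point requiring care is a convention check: the precise form of the graphical identity depends on the orientation conventions for the braiding and on the choice of pivotal structure, and these must match the conventions under which $\beta_{X,X}$ is identified with a complex number via $\End(X \tensor X) \iso \IC$. Once conventions are fixed consistently, positivity of quantum dimensions rules out any remaining sign ambiguity and the identification is forced.
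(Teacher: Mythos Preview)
Your proof is correct and follows essentially the same approach as the paper: both arguments use the graphical calculus in a ribbon category to relate $\theta_X$ and $\beta_{X,X}$ up to a quantum-dimension factor, and then invoke that invertible objects in a unitary modular tensor category have dimension $1$. The only cosmetic difference is that the paper takes the \emph{full} trace of both the crossing and the kink (observing that each yields the figure-eight, hence $\beta_{X,X}\cdot\dim(X\tensor X)=\theta_X\cdot\dim(X)$), whereas you use the standard partial-trace identity $\theta_X\cdot\id_X = \mathrm{ptr}(\beta_{X,X})$ directly; both routes collapse to the same conclusion once $\dim(X)=1$.
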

\begin{proof}
    This is best understood using string diagrams. The values of $\beta_{X,X}$
    and $\theta_X$ are captured by the following equations (all strands
    are coloured with the object $X$).
    \begin{equation*}
    \begin{tikzpicture}[scale=0.5]
        \draw[-] (2,0) .. controls (2,2) and (0,2) .. (0,4);
        \draw[line width=1em, white] (0,0) .. controls (0,2) and (2,2) .. (2,4);
        \draw[-] (0,0) .. controls (0,2) and (2,2) .. (2,4);
        
        \node at (4,2) {$= \beta_{X,X}\ \cdot$};
        
    \begin{scope}[shift={(5.8,0)}]
        \draw[-] (0,0) -- (0,4);
        \draw[-] (2,0) -- (2,4);
    \end{scope}
    
    \begin{scope}[shift={(15,0)}]
        \draw[-] (0,4) .. controls (1,1) and (2,1) .. (2,2);
        \draw[line width=1em, white] (0,0) .. controls (1,3) and (2,3) .. (2,2);
        \draw[-] (0,0) .. controls (1,3) and (2,3) .. (2,2);
        \node at (3.3,2) {$= \theta_X\ \cdot$};
    \begin{scope}[shift={(4.8,0)}]
        \draw[-] (0,0) -- (0,4);
    \end{scope}
    \end{scope}
    \end{tikzpicture}
    \end{equation*}
    The left hand sides of these equations become equal upon taking the trace ---
    they both form the figure eight.
    The traces of the right hand sides must thus also be equal:
    \begin{equation*}
        \beta_{X,X} \cdot \mathrm{Tr}(\id_{X \tensor X}) = 
        \theta_X \cdot \mathrm{Tr}(\id_X).
    \end{equation*}
    In a unitary tensor category, $\mathrm{Tr}(\id_Y)=1$ if $Y$ is invertible.
    Invertibility of $X$ and $X \tensor X$ now implies $\beta_{X,X} = \theta_X$.
\end{proof}

Armed with this Lemma, we proceed as before: we first compute the simply-connected
case, and then deal with quotients.
\begin{thm}
\label{thm:simplyConnAgreesWithLG}
    For a compact simply-connected Lie group $G$ and positive-definite level
    $k \in \H^4(\B G, \IZ)$, there is a braided equivalence
    \begin{equation*}
        \DZ G_k \simeq {(\Rep V_{G,k})}^\times,
    \end{equation*}
    unless $G$ contains a factor of $E_8$ to which
    $k$ restricts as level 2.
\end{thm}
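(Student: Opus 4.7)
The strategy is to reduce to the case of simple simply-connected $G$, then use the explicit classification of simple currents in $\Rep V_{G,k}$, and finally match the quadratic forms via Lemma~\ref{lem:braidingIsTwist}.

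First, since $G$ is simply-connected, it decomposes as a product $G = \prod_i G_i$ of compact simple simply-connected Lie groups, and a positive-definite level $k \in \H^4(\B G,\IZ)$ splits accordingly as $(k_i)$. The left hand side behaves well under this decomposition: by Lemma~\ref{lem:DZofProduct}, the Drinfel'd centre of a product of central extensions is the product of the centres with pointwise-multiplied quadratic form. The right hand side also factors: the vertex operator algebra splits as $V_{G,k} = \bigotimes_i V_{G_i,k_i}$, and the category of representations splits as a Deligne product $\Rep V_{G,k} \simeq \boxtimes_i \Rep V_{G_i,k_i}$, so the invertible part does as well. Thus it suffices to treat each factor separately, and we may assume $G$ is simple simply-connected.

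Next, I would identify $\pi_0 {(\Rep V_{G,k})}^\times$ with $Z(G)$. The simple objects of $\Rep V_{G,k}$ are indexed by level-$k$ integrable highest weights of the affine Lie algebra $\widehat{\mathfrak{g}}$, and an invertible simple object is precisely a \emph{simple current}. It is a classical fact (originally due to Fuchs, see~\cite{fuchs1991simple}) that the group of simple currents of the WZW model for simple simply-connected $G$ at positive level $k$ is naturally isomorphic to $Z(G)$, \emph{except} in the anomalous case of $E_8$ at level $2$, where an additional simple current appears. After excluding that case, we obtain $\pi_0 {(\Rep V_{G,k})}^\times = Z(G) = \pi_0 \DZ G_k$ by Corollary~\ref{cor:pi0isZG}. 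The identification $\pi_1 {(\Rep V_{G,k})}^\times = U(1) = \pi_1 \DZ G_k$ is immediate from Lemma~\ref{lem:pi1ZCispi1C} and our choice to take the unitary subgroup on the VOA side.

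It remains to match the quadratic forms. On the Drinfel'd side, Theorem~\ref{thm:DrinfeldCentreOfGk} gives $q(z) = \exp(\tfrac{k}{2} I(\bar z, \bar z))$. On the loop group side, Lemma~\ref{lem:braidingIsTwist} identifies the self-braiding of the simple current $J_z$ with its ribbon twist $\theta_{J_z} = \exp(2\pi i h_{J_z})$, where $h_{J_z}$ is the conformal dimension of $J_z$. The standard formula $h_\lambda = \frac{\langle \lambda, \lambda + 2\rho \rangle}{2(k + h^\vee)}$ combined with the explicit description of the simple current labelled by $z \in Z(G)$ as having highest weight $k \omega_z$ (where $\omega_z$ is the cominuscule fundamental weight whose image in $Z(G)$ is $z$) yields, after using the affine Weyl group invariance and the Freudenthal--de Vries strange formula, precisely $h_{J_z} \equiv \tfrac{k}{2} I(\bar z, \bar z) \bmod \IZ$. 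Once this equality of quadratic forms is established, the two braided 2-groups have the same $\pi_0$, $\pi_1$, and quadratic form, so they are braided monoidally equivalent by Theorem~\ref{thm:EMquads} (applied to the underlying discrete braided categorical groups, which suffices since $Z(G)$ is finite).

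The main obstacle is the final computation of conformal dimensions of simple currents and verifying they agree with $\tfrac{k}{2} I(\bar z,\bar z)$ modulo integers, case by case across the Dynkin types; this is a classical but nontrivial piece of affine Lie algebra representation theory. The $E_8$-at-level-$2$ exception, where the simple current group is strictly larger than $Z(E_8) = 0$, is the precise reason for the exclusion in the statement.
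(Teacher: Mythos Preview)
Your proposal is correct and follows essentially the same route as the paper: identify $\pi_0$ via the simple-current classification (with the $E_8$ level-$2$ exception), invoke Lemma~\ref{lem:braidingIsTwist} to reduce the quadratic form to the ribbon twist, and match with Theorem~\ref{thm:DrinfeldCentreOfGk}. The only notable difference is that where you propose to derive $\theta_z \equiv \exp\bigl(\tfrac{k}{2}I(\bar z,\bar z)\bigr)$ from the conformal-weight formula (your invocation of Freudenthal--de Vries here is not quite the right tool, and you rightly flag a case-by-case check as the real obstacle), the paper simply cites this identity from~\cite[Prop~10]{henriques2017classification}, bypassing the computation entirely.
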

\begin{proof}
    The group of invertible objects of $\Rep V_{G,k}$ was identified as
    \begin{equation*}
        \pi_0 {(\Rep V_{G,k})}^\times = Z(G)
    \end{equation*}
    in~\cite[Prop 2.20]{li2001certain} 
    (see also~\cite[Prop 7]{henriques2017classification}).
    This only fails when $G$ contains a factor of $E_8$ 
    and $k$ restricts to level $2$ on that factor.
    The category $\Rep V_{E_8,2}$ contains an 
    invertible object of order 2, despite $Z(E_8)$ being trivial. 
    Excluding this case, 
    ${(\Rep V_{G,k})}^\times$ is tensor equivalent to the 
    underlying 2-group of $\DZ G_k $, and it remains to compare the self-braidings.
    
    By Lemma~\ref{lem:braidingIsTwist}, the self-braiding on an invertible object is 
    equal to its ribbon twist. The values of these are well known:
    On an invertible object corresponding to $z \in Z(G)$, 
    \begin{equation*}
        \beta_{z,z} = \theta_z = \exp\left(\tfrac{1}{2}I(\bar{z},\bar{z})\right),
    \end{equation*}
    where $\bar{z}$ again denotes a lift of $z$ to a Cartan of $G$
    (see~\cite[Prop 10]{henriques2017classification}). 
    This formula is the same as that computed for simply-connected groups
    in Theorem~\ref{thm:DrinfeldCentreOfGk} and Lemma~\ref{lem:DZofProduct}:
    the quadratic form on $\pi_0 {(\Rep^k LG)}^\times$ agrees with that
    on $\pi_0 \DZ G_k$.
\end{proof}

To show the above result also holds for non-simply-connected semisimple 
compact Lie groups $G$, 
we use the relation between ${(\Rep V_{G,k})}^\times$ and 
${(\Rep V_{\tilde{G},\tilde{k}})}^\times$, where $\tilde{G}$ denotes
the universal cover of $G$ and $\tilde{k}$ the pullback 
cohomology class. Recall that $G$ is of the form $\tilde{G}/Z$ where
$Z \into \tilde{G}$ is a finite central subgroup.

\begin{thm}
\label{thm:semisimpleAgreesWithLG}
    Let $G=\tilde{G}/Z$ be a semisimple compact connected Lie group
    and $k \in \H^4(\B G, \IZ)$ positive-definite
    (such that $\tilde{G}$ contains no factor of $E_8$ to 
    which $k$ pulls back as level 2).
    Then there is a braided equivalence
    \begin{equation*}
        \DZ G_k = {(\Rep V_{G,k})}^\times.
    \end{equation*}
\end{thm}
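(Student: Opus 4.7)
The plan is to bootstrap from the simply-connected case (Theorem~\ref{thm:simplyConnAgreesWithLG}) by showing that passage to the quotient $G = \tilde G/Z$ is implemented on both sides by the same local-module construction. First I would combine Theorem~\ref{thm:simplyConnAgreesWithLG} with Lemma~\ref{lem:DZofProduct} and the factorisation $V_{\tilde G,\tilde k} = \bigotimes_i V_{G_i,k_i}$ induced by $\tilde G = \prod_i G_i$ and $\tilde k=\pi^\ast k=(k_i)$ (the torus factor is absent since $G$ is semisimple). This supplies a braided equivalence
\begin{equation*}
    \Phi:\DZ \tilde{G}_{\tilde{k}} \eqto (\Rep V_{\tilde{G},\tilde{k}})^\times
\end{equation*}
at the level of universal covers.

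Next I would interpret the passage to the quotient as a local-module construction on both sides. On the centre side, Remark~\ref{rmk:alternativeProofOfQuotientRule} gives $\DZ G_k^\dirSum = \mathrm{loc}_{A_Z}(\DZ \tilde{G}_{\tilde{k}})^\dirSum$, where $A_Z = \bigoplus_{z\in Z} z$ is a commutative étale algebra; it is genuinely commutative precisely because Lemma~\ref{lem:MapOnH4AndQuadraticForms} guarantees $q|_Z\equiv 1$ on the unique lift of $Z$. On the VOA side, the standard theory of simple-current extensions (e.g.\ Kirillov--Ostrik; Huang--Kirillov--Lepowsky) expresses $V_{G,k}$ as a simple-current extension of $V_{\tilde{G},\tilde{k}}$ by $Z$, giving
\begin{equation*}
    \Rep V_{G,k} \simeq \mathrm{loc}_{A'_Z}\Rep V_{\tilde{G},\tilde{k}}
\end{equation*}
as braided tensor categories, where $A'_Z = \bigoplus_{z\in Z} z$ is the analogous algebra inside $(\Rep V_{\tilde{G},\tilde{k}})^\times$.

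The third step is to match $A_Z$ and $A'_Z$ under $\Phi$. By Lemma~\ref{lem:braidingIsTwist}, an invertible object in either category is characterised among its siblings by its ribbon twist, and the common twist formula $\theta_z = \exp(\tfrac{1}{2}I(\bar z,\bar z))$ used in the proof of Theorem~\ref{thm:simplyConnAgreesWithLG} forces $\Phi$ to send the lift of $z \in Z \subset \pi_0 \DZ \tilde{G}_{\tilde{k}}$ to the simple current labelled by $z$; hence $\Phi(A_Z)\iso A'_Z$ as commutative algebras. Since $\mathrm{loc}$ is functorial in the pair (braided tensor category, commutative étale algebra), applying it to $\Phi$ yields a braided equivalence $\DZ G_k^\dirSum \eqto \Rep V_{G,k}$, and restriction to invertible objects and unitary morphisms gives the claim.

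The main obstacle will be locating a precise reference for $\Rep V_{G,k} \simeq \mathrm{loc}_{A'_Z} \Rep V_{\tilde{G},\tilde{k}}$ at the level of VOAs (and of unitary modular tensor categories), rather than the purely categorical analogue, and checking that this equivalence is compatible with the braiding on the nose under the positivity hypothesis on $k$ (excluding $E_8$ at level 2). Once that input is in hand, the identification of algebras under $\Phi$ is forced by the ribbon-twist formula and the remainder is the functoriality of the local-module construction.
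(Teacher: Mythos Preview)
Your approach is essentially the paper's: use the simple-current/local-module description $\Rep V_{G,k}\simeq\mathrm{loc}_Z\Rep V_{\tilde G,\tilde k}$ (the paper cites \cite{huang2015braided,creutzig2017tensor} for this), combine it with the simply-connected result, and identify the invertible local $Z$-modules with $(Z^\bot/Z,q\restriction_{Z^\bot})$, which matches Theorem~\ref{thm:nonSimplyConn}. The paper frames this as ``compute both sides and observe they coincide'' rather than transporting an algebra along an explicit $\Phi$, but the content is the same.

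Two small points to tighten. First, the sentence ``an invertible object\ldots is characterised among its siblings by its ribbon twist'' is not true in general (distinct simple currents can share a twist value); what actually pins down $\Phi$ on $Z$ is that both identifications $\pi_0\DZ\tilde G_{\tilde k}=Z(\tilde G)$ and $\pi_0(\Rep V_{\tilde G,\tilde k})^\times=Z(\tilde G)$ are canonical group isomorphisms, so $\Phi$ is the identity on $\pi_0$ by construction. Second, the displayed conclusion $\DZ G_k^\dirSum\eqto\Rep V_{G,k}$ cannot hold as stated, since $\Rep V_{G,k}$ is generally not pointed; what you obtain is an equivalence onto the additive subcategory generated by invertibles, together with the (nontrivial but standard) fact that every invertible in $\mathrm{loc}_Z\Rep V_{\tilde G,\tilde k}$ arises from an invertible upstairs. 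The paper's ``compute both sides'' phrasing sidesteps both issues.
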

\begin{proof}
    By~\cite[Thm 3.4]{huang2015braided} and~\cite{creutzig2017tensor}, 
    there is a braided equivalence
    \begin{equation*}
        \Rep V_{G,k} = \mathrm{loc}_Z \Rep V_{\tilde{G},\tilde{k}},
    \end{equation*}
    where the right-hand side is 
    the category of local $Z$-modules in $\Rep V_{\tilde{G},\tilde{k}}$.
    The maximal sub-2-group of the right-hand side may be computed by 
    first restricting to the additive subcategory on invertible objects,
    taking local modules there, and then taking the maximal 
    sub-2-group. 
    (See also Remark~\ref{rmk:alternativeProofOfQuotientRule}
    for a computation of $\DZ G_k$ using this method.)
    The additive subcategory on invertibles is a pointed braided fusion category
    with simple objects $Z(\tilde{G})$ and braiding described by a 
    quadratic form $q$ (which agrees with that on $\DZ \tilde{G}_{\tilde{k}}$ 
    by Theorem~\ref{thm:simplyConnAgreesWithLG}). 
    The category of local $Z$-modules is described 
    in~\cite[Sect 3.1]{davydov2018third}: the local modules form a pointed braided
    fusion category with group of simple objects $Z^\bot/Z$ 
    (where $Z^\bot$ is defined as before Theorem~\ref{thm:nonSimplyConn}).
    The quadratic form on $Z^\bot/Z$ is given by the restriction 
    of $q$ to $Z^\bot$. 
    Thus the maximal sub-2-group is
    \begin{equation*}
        {\left(\mathrm{loc}_Z \Rep V_{\tilde{G},\tilde{k}}\right)}^\times =
        \left(Z^\bot/Z, q\restriction_{Z^\bot}\right).
    \end{equation*}
    This agrees with the result of Theorem~\ref{thm:nonSimplyConn},
    completing the comparison.
\end{proof}


\printbibliography

\end{document}